\providecommand{\U}[1]{\protect\rule{.1in}{.1in}}
\newtheorem{theorem}{Theorem}
\newtheorem{corollary}[theorem]{Corollary}
\newtheorem{definition}[theorem]{Definition}
\newtheorem{example}[theorem]{Example}
\newtheorem{lemma}[theorem]{Lemma}
\newtheorem{proposition}[theorem]{Proposition}
\newtheorem{remark}[theorem]{Remark}
\begin{document}

\title[Pugh's global linearization for  nonautonomous unbounded systems] 
{Pugh's global linearization for  nonautonomous unbounded systems with $\mu$-dichotomy via Lyapunov theory}


\author[ Weijie Lu, Yonghui Xia]
{Weijie Lu,  Yonghui Xia}  

\address{Weijie Lu, School of Mathematics  Science, Zhejiang Normal University, Jinhua  321004, China}
\email{luwj@zjnu.edu.cn}

\address{Yonghui Xia, School of Mathematics, Foshan University, Foshan 528000, China}
\email{xiadoc@163.com; yhxia@zjnu.cn. Author for correspondence}

\date{}

\begin{abstract}
  The classical global linearization theorem for autonomous system given in [C. Pugh, Amer. J. Math., 91  (1969) 363-367] requires that nonlinear system with hyperbolicity satisfies boundedness and Lipschitz continuity.
 In this paper, we establish an {\em unbounded}  global linearization theorem for nonautonomous systems subject to unbounded Lipschitz perturbations, under the assumption that the linear system admits a nonuniform $\mu$-dichotomy (more general than classical exponential dichotomy). 
 To this end, we first develop a comprehensive Lyapunov function framework for systems exhibiting nonuniform $\mu$-dichotomy. Subsequently, we establish a characterization of nonuniform $\mu$-dichotomy in terms of strict quadratic Lyapunov functions. Building upon these theoretical foundations, we then employ these Lyapunov functions to derive a linearization result under the nonuniform $\mu$-dichotomy.
 In the proof, we give a splitting lemma for nonuniform $\mu$-dichotomy to decouple   hyperbolic
system into  a contractive system   and an expansive system. Then we construct a transformation to linearize contractive/expansive system,
which is defined by the crossing time with respect to the unit sphere.
%
\\
{\em Keywords:}  Global linearization; nonuniform $\mu$-dichotomy; invariant foliations \\
{\em  MSC2020:}  37D25; 37C86; 37C60
\end{abstract}

\maketitle

\tableofcontents

\section{Introduction}
\subsection{Motivation and contributions}
  The well-known Hartman-Grobman theorem states that $C^1$ diffeomorphism of $\mathbb{R}^n$ can be locally $C^0$ linearized near a hyperbolic equilibrium point.
By using    the methods of the structural stability of Anosov diffeomorphisms,
 Pugh (\cite{Pugh})  proved a global version of Hartman-Grobman theorem, stating that a hyperbolic isomorphism $A$ is globally topologically conjugated to  $(A+f)$ if the perturbation $f$ is bounded and small Lipschitzian. In 1973, Palmer (\cite{Palmer1})  generalized Pugh's global linearization theorem to nonautonomous systems under two crucial assumptions: first, that the linear system admits an exponential dichotomy, and second, that the nonlinear perturbation is globally bounded with sufficiently small Lipschitz constant.
Later, this theory was generalized by  (\cite{BDK-JDE,B-V2,LuK-JDDE,Lu1,LWW,ZWM-ETDS})  and the references therein.
As far as we know, the global \(C^0\) linearization results mentioned above require at least that the nonlinear perturbation is globally bounded.

 However, such globally bounded requirement imposed on the nonlinear perturbation is difficult to be satisfied.  Therefore, in this paper, we prove unbounded global linearization of hyperbolic nonautonomous systems by adopting an approach based on Lyapunov functions
and study this problem for a larger class of dynamical systems (nonuniform $\mu$-dichotomies systems).
Consider nonautonomous differential equations of the form
\begin{equation}\label{i11}
x'=A(t)x+f(t,x), \quad \text{$t\in\mathbb{R}$ and $x\in\mathbb{R}^n$,}
\end{equation}
 with the associated linear differential equation
\begin{equation}\label{i1}
x'=A(t)x, \quad \text{$t\in\mathbb{R}$ and $x\in\mathbb{R}^n$,}
\end{equation}
where $t\mapsto A(t)$ is   locally integrable and $f$ is a $C^0$-Carath\'{e}odory class function satisfies the set
\[
  \mathcal{A}_f:=\{ f(t,x)\in\mathbb{R}^n:f(t,0)=0\; \mathrm{and}\; \|f(t,x)-f(t,\tilde{x})\| \le \varphi(t) \|x-\tilde{x}\|,\; \forall t\in\mathbb{R}, x,\tilde{x}\in\mathbb{R}^n\}.
 \]
Here, \(\varphi(t) \) is a nonnegative decreasing function related to the nonuniform $\mu$-growth rate (see \eqref{ff-Lip}).
 Assume that Eq. \eqref{i1} admits a nonuniform $\mu$-dichotomy (see Definition \ref{NUD}).
Then we claim that: Eq. \eqref{i11} and Eq. \eqref{i1} are globally topological conjugacy
(see Theorem \ref{thm-lin}).

  We note that as a special case of Theorem  \ref{thm-lin}, namely, consider an autonomous system $x'=Ax+f(x)$ with $f(0)=0$,
we have the following result:
 \begin{itemize}
   \item If $A$ is hyperbolic and $f$ is a Lipschitz continuous function with a small Lipschitz constant, then there is a homeomorphism such that
 $x'=Ax+f(x)$ can be linearized globally.
 \end{itemize}
 Compared with Pugh's classical global linearization (\cite{Pugh}),
we find that the nonlinear perturbation \( f(x) \) does not require the assumption of boundedness.
Therefore, this will lead   us to overcome more technical difficulties in our proof.
 For the convenience of the readers,  we give a scheme for proving Theorem \ref{thm-lin}.
 It is known that the theory of stable/unstable manifolds and foliations plays an important role in decoupling hyperbolic
systems (see e.g., \cite{LuK-JDDE,CR-JDDE,DZZ-PLMS,Lu1,TB-JDE,Z-MA,ZWM-TAMS}).
    Regarding to our case, we first need  to  study the invariant manifolds and invariant foliations of systems with nonuniform $\mu$-dichotomy,
which can also be regarded as a generalization of the works in (\cite{BSV-JLMS,BS-JFA,CHT-JDE,CLL-JDE,ZWN-AM}).
After that, we give a splitting lemma with nonuniform $\mu$-dichotomy  to decouple the  system into a contractive system  and an
expansive  system.
Finally, inspired to the recent work (\cite{CR-DCDSA,H-DCDS,LinFX,WX-DCDS}),
 we construct a transformation to linearize contractive/expansive system, which is defined by
the crossing time with respect to the unit sphere of a strict quadratic Lyapunov function
related to a nonuniform $\mu$-contractive/expansive linear system.
We emphasize that previous studies on linearization-including (\cite{CR-DCDSA,H-DCDS,LinFX,WX-DCDS})-were limited to either contraction plus unboundedness or hyperbolic plus partial unboundedness. To the best of our knowledge, Theorem \ref{thm-lin} represents the first   attempt to achieve unbounded global linearization within the framework of  nonuniform  hyperbolic nonautonomous systems.

\subsection{Brief history of nonuniform $\mu$-dichotomy}

The concept of hyperbolicity originated from the study of phase spaces in autonomous systems
where exponential contraction and exponential expansion coexist.
 For nonautonomous systems,  to adapt this concept, in 1930 Perron (\cite{Perron})  proposed the definition of exponential dichotomy,
 i.e., assuming that there exist  (uniform) exponential contraction and exponential expansion along complementary directions
at each moment of time.
To date, the classical theory of (uniform) exponential dichotomy has reached a state of remarkable maturity,
such as  the monographs of Coppel  (\cite{Coppel}),  Sell and You (\cite{SY-book}).
In the early 1970s, the concept of nonuniform exponential dichotomy was formally introduced--a seminal departure from the classical (uniform) theory-whereby the exponential contraction/expansion rates were permitted to vary dynamically with time. 
The original references can be found in the work of Oseledets (\cite{Ose}) and Pesin (\cite{Pesin}).
For modern expositions, Barreira and Pesin (\cite{BP-book2}) as well as Barreira and Valls (\cite{B-Vbook1})
have provided detailed expositions on the current state of this theory.
In addition, the latest developments in the theory of nonuniform exponential dichotomy (particularly in applications to linearization)
can be referred to in
\cite{B-V2,B-V4,DZZ-MZ,DZZ-PLMS,Ga-JDDE24,ZLZ-JDE17} and references therein.

 Despite the foundational role of nonuniform exponential dichotomy in nonautonomous dynamics, as epitomized by Oseledets' theorem and
Pesin theory,  recent advancements prove that
there are many examples in nonautonomous dynamics that show nonexponential rates of contraction and expansion along the stable and unstable directions, we can refer to (\cite{BDZ-Arxiv,BS-JFA,JDDE-BS,DSS-CJM}) and others.

In a recent study, for linear nonautonomous differential equations, Silva (\cite{Si-JDE}) introduced the concept of nonuniform $\mu$-dichotomy,
which goes beyond the previous dichotomies.
 Notably,  the author proposed a nonuniform $\mu$-dichotomy spectrum theory associated with this new type of dichotomy and developed the reducibility and normal form theories for nonuniform $\mu$-dichotomy.
Based on this work, the theory of nonuniform $\mu$-dichotomy is now rapidly developing.
 Casta\~{n}eda and Jara (\cite{CN-JDE})
 presented a generalized Siegmund's normal form theory under the framework of nonuniform $\mu$-dichotomy spectrum theory
 and introduced concepts such as $\mu$-eligible.
Jara and Gallegos  (\cite{JG-MA}) first clarified the noninvariance of the nonuniform $\mu$-dichotomy spectrum of nonuniform kinematically similar systems, which breaks through the traditional understanding of the spectral invariance of  dichotomies.
Subsequently, Casta\~{n}eda, Gallegos  and Jara (\cite{CGJ-Arxiv})  rigorously proved that
weakly kinematically similar systems do not preserve the nonuniform dichotomy spectrum by using optimal ratio mappings.
  Dragi\v{c}evi\'{c} and  Silva  (\cite{DS-Arxiv})
 established the universal connection between nonuniform $\mu$-dichotomy and classical exponential dichotomy through time rescaling techniques and applied it to the smooth linearization theory of nonautonomous systems.

\subsection{Brief history of  Lyapunov functions}
On the other hand,  Lyapunov functions play an essential role in stability theory of  differential equations.
   This concept can be traced back to seminal work of Lyapunov in 1892 (\cite{Lyapunov}).
The earliest detailed treatment of this theory is in the monographs of LaSall and Lefschetz (\cite{LaSall}),
Bhatia and Szeg\"{o} (\cite{Bhatia}).
   But for a given differential equation,  currently,  there is not exist a general way to construct a Lyapunov function.
    Nevertheless, once we know that there is a Lyapunov function,  it will help us to  determine the  stability of the differential equation.
As a seminal application, the relationship between Lyapunov functions and (uniform) exponential dichotomy was first rigorously established by Ma\u{l}zel'  (\cite{Maizel}). A comprehensive theoretical framework connecting these concepts can be found in classical monographs by Coppel (\cite{Coppel}),  Mitropolsky, Samoilenko and Kulik (\cite{Mitropolsky}).
Subsequent advancements extended this theory to nonuniform exponential dichotomy, in which
the representative works can refer to Barreira and Valls (\cite{BV-09JDE}),  Barreira, Dragi\v{c}evi\'{c} and Valls (\cite{BDV-JDE}),
Gallegos, Grau and Mesquita (\cite{Ga-JDE21}).


\subsection{Outline}

The organization  of this paper are as follows:
 In Section 2, we introduce some notations and preliminaries, which mainly include the definition of strict and quadratic Lyapunov functions,
$\mu$-growth rate, nonuniform $\mu$-dichotomies and nonuniform $\mu$-bounded growth.
In Section 3, we   state the main results and provide the theory of invariant manifolds/foliations and Lyapunov functions in the sense of nonuniform  $\mu$-dichotomy.
In Section 4, we provide a nonlinear splitting lemma for nonuniform $\mu$-dichotomy systems.
In Section 5, we prove our  main results by using the theory of strict quadratic Lyapunov functions.
Finally,  the proofs of some useful lemmas and propositions are given in the Appendix  A-C.

\section{Notations and  preliminaries}

In this section, we recall several notions about Lyapunov functions  and nonuniform
$\mu$-dichotomies for linear differential equations.
Let $GL(n,\mathbb{R})$ denote the general linear group of order $n$ over the field of real numbers $\mathbb{R}$ and
 let $A(t)\in GL(n,\mathbb{R})$ for each $t\in\mathbb{R}$.
Consider the nonautonomous linear differential equation
\begin{align}\label{lin-eq}
x'=A(t)x, \quad t\in\mathbb{R}.
\end{align}
We suppose  that $t\mapsto A(t)$ is   locally integrable. 
As usual, we denote by $\Psi(t,s), t,s\in\mathbb{R}$, the evolution operator associated to \eqref{lin-eq}.

 \subsection{Lyapunov functions}

 Given a function $V: \mathbb{R}^{n}\rightarrow \mathbb{R}$, we consider the  cones
\[ \mathcal{C}^{u}(V)=\{0\}\cup V^{-1}(0,+\infty) \quad \mathrm{and}\quad  \mathcal{C}^{s}(V)=\{0\}\cup V^{-1}(-\infty,0).\]

\begin{definition}\label{Lp-func}{\rm
We say that a continuous function $V: \mathbb{R}\times \mathbb{R}^{n} \rightarrow \mathbb{R}$ is a  {\em Lyapunov function}
 for \eqref{lin-eq} with the notation $V_{t}=V(t,\cdot)$
if there exist
$d_s,d_u\in\mathbb{N}$ with $d_s+d_u=n$ satisfying for each $\tau\in\mathbb{R}$,
\\
(1)  $d_{u}$ and $d_{s}$ are the maximal dimensions of the linear subspaces inside $\mathcal{C}^{u}(V_{\tau})$ and $\mathcal{C}^{s}(V_{\tau})$;
\\
(2) for every $t\geq \tau$ and $x\in\mathbb{R}^{n}$,  $V(t,\Psi(t,\tau)x)\geq V(\tau,x)$.
}
\end{definition}

Given a Lyapunov function $V$, for each $\tau\in\mathbb{R}$ we define
$$
 \mathcal{E}_{\tau}^{u}=\bigcap_{t\in\mathbb{R}}\Psi(\tau,t) \overline{\mathcal{C}^{u}(V_{t})} \quad \mathrm{and} \quad
    \mathcal{E}_{\tau}^{s}=\bigcap_{t\in\mathbb{R}}\Psi(\tau,t) \overline{\mathcal{C}^{s}(V_{t})}.
$$
  Obviously, $x\in\mathcal{E}_{\tau}^{u}$ if and only if
$$
V(t,\Psi(t,\tau)x)\geq 0, \quad \forall t\in\mathbb{R},
$$
and $x\in\mathcal{E}_{\tau}^{s}$ if and only if
$$
V(t,\Psi(t,\tau)x)\leq 0, \quad \forall t\in\mathbb{R},
$$
Moreover, these sets satisfy the following property:
\begin{align}\label{E-su}
\Psi(t,\tau) \mathcal{E}_{\tau}^{u}= \mathcal{E}_{t}^{u} \quad \mathrm{and} \quad  \Psi(t,\tau) \mathcal{E}_{\tau}^{s}= \mathcal{E}_{t}^{s},
\quad \forall t,\tau \in\mathbb{R}.
\end{align}

 \subsection{$\mu$-growth rate and strict/quadratic  Lyapunov function}

\begin{definition}\label{u-groga}{\rm
We say that
a strictly increasing function $\mu:\mathbb{R}\to \mathbb{R}^+$ is a {\em growth rate} if
$$
\text{$\mu(0)=1$, $\lim_{t\to +\infty} \mu(t)=+\infty$ and $\lim_{t\to -\infty} \mu(t)=0$.}
$$
Additionally,
if $\mu$ is differentiable, it is said a {\em differentiable growth rate}.
}
\end{definition}

Let $V$ be a Lyapunov function for Eq. \eqref{lin-eq}, and suppose that there exist $C>0$ and $\epsilon \ge 0$ such that
\begin{align}\label{u-grow}
 |V(\tau,x)| \le C \mu(\tau)^{\mathrm{sign}(\tau)\epsilon}\|x\|, \quad \text{$\forall (\tau,x)\in \mathbb{R}\times \mathbb{R}^n$.}
\end{align}

\begin{definition}\label{str-V}
{\rm
We say that $V$ is a {\em strict Lyapunov function} if there exist $C>0$, $\epsilon\ge 0$ and $\alpha<0, \beta<0$ such that for each $\tau\in\mathbb{R}$,
\\
(1) if $x\in \mathcal{E}_{\tau}^{u}$, then
$$
V(t,\Psi(t,\tau)x)\ge   \left(\frac{\mu(\tau)}{\mu(t)}\right)^\alpha  V(\tau,x), \quad  t\ge  \tau;
$$
(2) if $x\in \mathcal{E}_{\tau}^{s}$, then
$$
|V(t,\Phi(t,\tau)x)| \leq  \left(\frac{\mu(t)}{\mu(\tau)}\right)^\beta  |V(\tau,x)|, \quad  t\ge \tau;
$$
(3) if $x\in \mathcal{E}_{\tau}^{s}\cup \mathcal{E}_{\tau}^{u}$, then
$$
  |V(\tau,x)| \ge \mu(\tau)^{-\mathrm{sign}(\tau)\epsilon}\|x\|/C.
$$
}
\end{definition}

\begin{remark}{\rm
Notice that $\mu(\tau)^{-\mathrm{sign}(\tau)\epsilon}\in (0,1]$ for all $\tau\in\mathbb{R}$.
}
\end{remark}


\begin{definition}\label{quad-V}{\rm
We say that  $V$ is a {\em quadratic Lyapunov function} if
\begin{equation*}
   U(t,x)= \langle S(t)x, x \rangle \quad \mathrm{and} \quad V(t,x)=- \mathrm{sign}\, U(t,x) \sqrt{|U(t,x)|},
\end{equation*}
where $\langle\cdot,\cdot\rangle$ is the usual inner product of $\mathbb{R}^{n}$ and $S(t)\in GL(n,\mathbb{R})$ is
 symmetric invertible for each $t\in\mathbb{R}$.
}
\end{definition}

\begin{remark}\label{Rem-4}{\rm
Given  $A,B\in GL(n,\mathbb{R})$, we write $A\leq B$ if they verify $\langle Ax,x\rangle \leq \langle Bx,x\rangle$ for $x\in\mathbb{R}^{n}$.
}
\end{remark}

\begin{remark}\label{Rem-41}{\rm
Let $S(t)$ be differentiable. In the case of {\em differentiable quadratic Lyapunov functions}, we define
\begin{align*}
\dot{V}(t,x)=&~ \frac{d}{d\hbar}V(t+\hbar,\Psi(t+\hbar,t)x)|_{\hbar=0}
\\
=&~ \frac{\partial}{\partial t}V(t,x)+\frac{\partial}{\partial x}V(t,x)A(t)x
\\
=&~  \langle S'(t)x, x \rangle + 2  \langle S(t)x, A(t)x \rangle.
\end{align*}
}
\end{remark}


\subsection{Nonuniform  $\mu$-dichotomies}

\begin{definition}\label{NUD}{\rm
Let $\mu:\mathbb{R}\to \mathbb{R}^+$ be a growth rate.
We say that Eq. \eqref{lin-eq} admits a {\em nonuniform $\mu$-dichotomy}
 if there exists a family of projections $\pi(t)\in GL(n,\mathbb{R})$, $t\in\mathbb{R}$, such that
$$
\pi(t)\Psi(t,s)=\Psi(t,s)\pi(s), \quad \forall t,s\in\mathbb{R},
$$
and there are constants $D\ge 1$, $\lambda_s<0$, $\lambda_u>0$ and $\nu,\omega\ge 0$, with $\lambda_s+\nu<0$ and $\lambda_u-\omega>0$, satisfying
\begin{equation}\label{u-dich}
\begin{cases}
 \|\Psi(t,s)\pi(s) \| \leq D \left( \frac{\mu(t)}{\mu(s)} \right)^{\lambda_s} \mu(s)^{\mathrm{sign}(s)\nu}, \quad \text{for}  \; t\ge s,
\\
 \|\Psi(t,s)(id-\pi(s)) \| \leq D  \left( \frac{\mu(t)}{\mu(s)} \right)^{\lambda_u} \mu(s)^{\mathrm{sign}(s)\omega}, \quad \text{for}  \;  t\le s,
\end{cases}
\end{equation}
where $id$ denotes the identity operator.
}
\end{definition}

In this case, for each $\tau \in\mathbb{R}$, we define the stable and unstable subspaces by
$$
\mathcal{X}_\tau^s=\pi(\tau)(\mathbb{R}^n), \quad  \mathcal{X}_\tau^u=(id-\pi(\tau))(\mathbb{R}^n).
$$
Furthermore, if $\nu=\omega=0$, then we say that Eq. \eqref{lin-eq} admits a uniform $\mu$-dichotomy.

\begin{example}{\rm
$\bullet$ Letting $\mu(t)=e^{t}$, $-\lambda_s=\lambda_u$ and $\nu=\omega=0$, then we recover the classical exponential dichotomy defined by Perron (\cite{Perron}.
\\
$\bullet$ Taking just $\mu(t)=e^t$, we obtain the notion of nonuniform exponential dichotomy.  It originated from Pesin (\cite{Pesin})
and has been widely studied by (\cite{BP-book2}).
\\
$\bullet$ Given a strictly increasing surjective function $\chi:[0,+\infty)\to [1,+\infty)$, we can define a growth rate $\mu:\mathbb{R}\to\mathbb{R}$ by
\begin{equation}\label{u-pogr}
\mu(t):=
\begin{cases}
 \chi(t), \quad \text{if $t\ge 0$},
\\
\frac{1}{\chi(|t|)}, \quad \text{if $t\le 0$},
\end{cases}
\end{equation}
and obtain the  corresponding nonuniform $\mu$-dichotomy.
Note that if $\chi(t)=t+1$, we obtain the growth rate $p(t)$ and the corresponding nonuniform polynomial dichotomy notion, see e.g., \cite{BDZ-Arxiv,BS-JFA,DSS-CJM}.
}
\end{example}

\subsection{Nonuniform $\mu$-bounded growth}

We  present a concept that describes the nonuniform $\mu$-bounded growth of the evolution operator $\Psi(t,s)$, $t,s\in\mathbb{R}$,
which was introduced by \cite[pp. 630-631]{Si-JDE}.

\begin{definition}\label{def-bogr}{\rm
We say that Eq. \eqref{lin-eq} admits a {\em nonuniform $\mu$-bounded growth}
 if there exist $D\ge 1, \lambda_{\max}>0$ and $\theta\ge 0$ such that for all $t,s\in\mathbb{R}$,
\begin{align}\label{u-bogr}
\|\Psi(t,s)\| \le D \left(\frac{\mu(t)}{\mu(s)}\right)^{\mathrm{sign}(t-s)\lambda_{\max}} \mu(s)^{\mathrm{sign}(s)\theta}.
\end{align}
}
\end{definition}

\begin{example}{\rm
If   $\mu(t)=e^t$,  we say that Eq. \eqref{lin-eq} exhibits a nonuniform exponential bounded growth. 
 If $\mu(t)$ has the form \eqref{u-pogr}, then Eq. \eqref{lin-eq} exhibits a nonuniform polynomial bounded growth. 
}
\end{example}

\section{Main results and auxiliary lemmas}

\subsection{Main results}
We firstly give a notion of a Carath\'{e}odory class function, which was introduced in \cite[Section 2.2]{CN-JDE}.
Let $\Omega \subset  \mathbb{R}\times \mathbb{R}^n$.
We say that $f:\Omega\to \mathbb{R}^n$ is a Carath\'{e}odory class function if for every  $\mathbb{I}\times U\subset \Omega$,
(i) $f(t,\cdot):U\to\mathbb{R}^n$ is continuous for almost all $t\in \mathbb{I}$ (i.e., outside  of a set of zero Lebesgue measure);
(ii) $f(\cdot,x):\mathbb{I}\to \mathbb{R}^n$ is measurable for each $x\in U$.

Consider a nonlinear perturbation of Eq. \eqref{lin-eq} as follows:
\begin{equation}\label{nonlin-eq}
x'=A(t)x+f(t,x), 
\end{equation}
where $f$ is a $C^0$-Carath\'{e}odory class function such that $f(t,0)=0$.
Let
\begin{align}\label{ff-Lip}
\varphi(t):=\delta_f \mu(t)^{-1-\mathrm{sign}(t)\theta} \mu'(t)
\end{align}
for every  $t\in\mathbb{R}$ with a small constant $\delta_f>0$.
Define a set
\[
  \mathcal{A}_f:=\{ f(t,x)\in\mathbb{R}^n:f(t,0)=0\; \mathrm{and}\; \|f(t,x)-f(t,\tilde{x})\| \le \varphi(t) \|x-\tilde{x}\|,\; \forall t\in\mathbb{R}, x,\tilde{x}\in\mathbb{R}^n\}.
 \]

Then we have the following result.

\begin{lemma}\label{non-esti}
Let $x(t,\tau,x_0)$ be the solution of Eq. \eqref{nonlin-eq} such that  \eqref{u-bogr} holds and $f(t,x)$ satisfies the set $\mathcal{A}_f$. Then for all $t,\tau\in\mathbb{R}$
and $x_0,\tilde{x}_0\in\mathbb{R}^n$, the following estimates hold:
$$
\|x(t,\tau,x_0)-x(t,\tau,\tilde{x}_0)\| \ge
\frac{1}{D} \left(\frac{\mu(t)}{\mu(\tau)}\right)^{-\mathrm{sign}(t-s)(\lambda_{\max}+D\delta_f)}\mu(\tau)^{-\mathrm{sign}(\tau)\theta}
\|x_0-\tilde{x}_0\|
$$
and
$$
\|x(t,\tau,x_0)-x(t,\tau,\tilde{x}_0)\| \le  D
  \left(\frac{\mu(t)}{\mu(\tau)}\right)^{ \mathrm{sign}(t-s)(\lambda_{\max}+D\delta_f)}\mu(\tau)^{\mathrm{sign}(\tau)\theta}\|x_0-\tilde{x}_0\|.
$$
\end{lemma}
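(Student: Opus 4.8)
The plan is to work from the variation--of--constants representation for the difference of two solutions of \eqref{nonlin-eq} and then run a Gronwall--type argument, exploiting the fact that the Lipschitz weight $\varphi$ in \eqref{ff-Lip} has been calibrated precisely so that the resulting integral kernel possesses an explicit logarithmic primitive. \emph{Step 1 (integral equation).} Put $u(t):=x(t,\tau,x_0)-x(t,\tau,\tilde x_0)$. Since both $x(\cdot,\tau,x_0)$ and $x(\cdot,\tau,\tilde x_0)$ solve \eqref{nonlin-eq}, the function $u$ solves the linear inhomogeneous equation $u'=A(t)u+g(t)$ with $g(t)=f(t,x(t,\tau,x_0))-f(t,x(t,\tau,\tilde x_0))$; hence $u(t)=\Psi(t,\tau)(x_0-\tilde x_0)+\int_{\tau}^{t}\Psi(t,r)g(r)\,dr$, and since $f\in\mathcal A_f$ we have $\|g(r)\|\le \varphi(r)\|u(r)\|$ for almost every $r$.

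\emph{Step 2 (kernel bound and the algebraic cancellation).} Taking norms and using \eqref{u-bogr}, together with the observations that $\mathrm{sign}(t-r)=\mathrm{sign}(t-\tau)$ and $\mathrm{sign}(r-\tau)=\mathrm{sign}(t-\tau)$ for every $r$ lying between $\tau$ and $t$, one arrives at the integral inequality
\[
\|u(t)\|\le D\Big(\tfrac{\mu(t)}{\mu(\tau)}\Big)^{\mathrm{sign}(t-\tau)\lambda_{\max}}\mu(\tau)^{\mathrm{sign}(\tau)\theta}\|x_0-\tilde x_0\|+D\Big|\int_{\tau}^{t}\Big(\tfrac{\mu(t)}{\mu(r)}\Big)^{\mathrm{sign}(t-\tau)\lambda_{\max}}\mu(r)^{\mathrm{sign}(r)\theta}\varphi(r)\|u(r)\|\,dr\Big|.
\]
The crucial point is that, by the very definition \eqref{ff-Lip}, $\mu(r)^{\mathrm{sign}(r)\theta}\varphi(r)=\delta_f\,\mu(r)^{-1}\mu'(r)=\delta_f\big(\ln\mu(r)\big)'$, \emph{independently of the sign of $r$}. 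Dividing the previous inequality by $\mu(t)^{\mathrm{sign}(t-\tau)\lambda_{\max}}$ and setting $w(t):=\mu(t)^{-\mathrm{sign}(t-\tau)\lambda_{\max}}\|u(t)\|$ turns it into $w(t)\le c_0+D\delta_f\big|\int_{\tau}^{t}(\ln\mu(r))'\,w(r)\,dr\big|$ with $c_0=D\,\mu(\tau)^{-\mathrm{sign}(t-\tau)\lambda_{\max}}\mu(\tau)^{\mathrm{sign}(\tau)\theta}\|x_0-\tilde x_0\|$.

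\emph{Step 3 (Gronwall and the two estimates).} Applying Gronwall's inequality against the increasing function $\ln\mu$ --- the forward version when $t\ge\tau$ and the backward version when $t\le\tau$ --- yields $w(t)\le c_0\exp\!\big(D\delta_f\,\mathrm{sign}(t-\tau)(\ln\mu(t)-\ln\mu(\tau))\big)=c_0\big(\mu(t)/\mu(\tau)\big)^{\mathrm{sign}(t-\tau)D\delta_f}$. Unwinding the definitions of $w$ and $c_0$ gives exactly the asserted upper bound, now valid for all $t,\tau\in\mathbb{R}$ and both orderings of the times. For the lower bound one exploits precisely this generality: writing $x_0-\tilde x_0=x(\tau,t,y_0)-x(\tau,t,\tilde y_0)$ with $y_0:=x(t,\tau,x_0)$ and $\tilde y_0:=x(t,\tau,\tilde x_0)$ and invoking the upper estimate with the initial and final times interchanged gives $\|x_0-\tilde x_0\|\le D(\mu(t)/\mu(\tau))^{\mathrm{sign}(t-\tau)(\lambda_{\max}+D\delta_f)}\,(\text{boundary weight})\,\|y_0-\tilde y_0\|$, and dividing through produces the claimed lower estimate.

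The main obstacle is the bookkeeping in Steps 2--3 with the nonuniform weights $\mu(\cdot)^{\mathrm{sign}(\cdot)\theta}$: the weight inside the integral is evaluated at the running variable $r$, whose sign need not be constant along the path from $\tau$ to $t$, so one must check that the cancellation $\mu(r)^{\mathrm{sign}(r)\theta}\varphi(r)=\delta_f(\ln\mu(r))'$ holds regardless of $\mathrm{sign}(r)$ (it does, by \eqref{ff-Lip}) and that the surviving boundary weight is the one attached to the initial time. Once these points are settled, the remainder is a routine Gronwall computation carried out in the two time directions.
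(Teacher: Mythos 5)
Your proof follows essentially the same route as the paper's: write the difference of solutions via variation of constants, majorize the Duhamel integral using the nonuniform $\mu$-bounded growth estimate \eqref{u-bogr} together with the Lipschitz weight $\varphi$, and observe that the resulting kernel $\varphi(\kappa)\mu(\kappa)^{\mathrm{sign}(\kappa)\theta}=\delta_f\,\mu'(\kappa)/\mu(\kappa)$ has the explicit logarithmic primitive that makes Gronwall produce the extra factor $\bigl(\mu(t)/\mu(\tau)\bigr)^{\mathrm{sign}(t-\tau)D\delta_f}$; then obtain the lower estimate from the upper one by the flow-inversion/time-swap trick. The main (harmless) difference is organizational: the paper writes out only the case $t\ge\tau$ and asserts the other direction is analogous, whereas you carry the sign-bookkeeping explicitly and treat both directions at once via the observation that $\mathrm{sign}(t-r)=\mathrm{sign}(r-\tau)=\mathrm{sign}(t-\tau)$ for $r$ strictly between $\tau$ and $t$. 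That is a cleaner presentation of the same computation.

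One point worth flagging, though it traces to the paper's own statement rather than to an error on your part: the time-swap argument, carried out honestly, yields the lower bound with boundary weight $\mu(t)^{-\mathrm{sign}(t)\theta}$, not $\mu(\tau)^{-\mathrm{sign}(\tau)\theta}$ as displayed in the lemma. Indeed, applying the upper estimate to the flow from time $t$ back to time $\tau$ gives
$$
\|x_0-\tilde x_0\|\le D\left(\frac{\mu(t)}{\mu(\tau)}\right)^{\mathrm{sign}(t-\tau)(\lambda_{\max}+D\delta_f)}\mu(t)^{\mathrm{sign}(t)\theta}\|x(t,\tau,x_0)-x(t,\tau,\tilde x_0)\|,
$$
and dividing through places $\mu(t)^{-\mathrm{sign}(t)\theta}$ in the resulting lower bound. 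Since in general neither $\mu(t)^{-\mathrm{sign}(t)\theta}$ nor $\mu(\tau)^{-\mathrm{sign}(\tau)\theta}$ dominates the other, these two forms are not interchangeable. The paper's own appendix in fact arrives at a display with $\|x(\tau,t,\cdot)-x(\tau,t,\cdot)\|$ on the left, which after relabeling also yields the $\mu(t)^{-\mathrm{sign}(t)\theta}$ weight; so the mismatch is a typo in the lemma statement (and the $\mathrm{sign}(t-s)$ appearing there should read $\mathrm{sign}(t-\tau)$). You could note this explicitly instead of asserting the "claimed lower estimate" follows verbatim.
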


We put the proof of this lemma in Appendix \ref{App-2}.
Next we are in a position to present our main results in this paper.

\begin{theorem}\label{thm-lin}
Assume that Eq. \eqref{nonlin-eq} admits a nonuniform $\mu$-dichotomy and that $f(t,x)$ satisfies the set $\mathcal{A}_f$.
Then if
$\theta\ge  \max\{\nu,\omega\}$, $\lambda_s<\nu-\theta$ and $\lambda_u>\theta-\omega$, then
Eq. \eqref{nonlin-eq} is topologically conjugated to Eq. \eqref{lin-eq}.
\end{theorem}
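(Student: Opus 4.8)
The plan is to split the linearization of \eqref{nonlin-eq} into a contractive and an expansive problem, to linearize each of these by a strict quadratic Lyapunov function via a ``crossing time'' construction, and then to reassemble.

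\emph{Step 1 (splitting).} I would first apply the splitting lemma for nonuniform $\mu$-dichotomy (Section~4) which, building on the invariant manifold and invariant foliation theory for \eqref{nonlin-eq} from Section~3, furnishes a homeomorphism $(t,x)\mapsto(t,L_t(x))$ of $\mathbb{R}\times\mathbb{R}^n$ transforming \eqref{nonlin-eq} into a decoupled system whose first block is a Lipschitz perturbation---with Lipschitz bound still $\varphi$---of a nonuniformly $\mu$-contractive linear equation and whose second block is a Lipschitz perturbation of a nonuniformly $\mu$-expansive one. Since \eqref{lin-eq} is already block diagonal with respect to the projections $\pi(t)$, it suffices to conjugate each block to its linear part and to compose with $L$ and $L^{-1}$. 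Reversing time reduces the expansive block to the contractive one, so I only describe the latter, which I keep denoting by \eqref{nonlin-eq}, with solution operator $x(t,\tau,\xi)$ and linear evolution $\Psi(t,s)$.

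\emph{Step 2 (the Lyapunov function is inherited by the perturbation).} By the characterization of nonuniform $\mu$-contraction through strict quadratic Lyapunov functions (Section~3) there is a strict quadratic Lyapunov function $V$ for the linear block with $\|x\|/C\le|V(t,x)|\,\mu(t)^{\mathrm{sign}(t)\epsilon}$ and $|V(t,x)|\le C\mu(t)^{\mathrm{sign}(t)\epsilon}\|x\|$ (cf.\ \eqref{u-grow} and Definition~\ref{str-V}(3)), along which $|V|$ strictly decreases at a $\mu$-rate. Using Lemma~\ref{non-esti} with $\tilde x_0=0$ and the explicit shape \eqref{ff-Lip} of $\varphi$, and---\emph{this is where the hypotheses are consumed}---the inequalities $\theta\ge\max\{\nu,\omega\}$, $\lambda_s<\nu-\theta$, $\lambda_u>\theta-\omega$ with $\delta_f$ small, I would show that the perturbation is subordinate to the contraction rate, so that along every nonzero solution of \eqref{nonlin-eq} the function $t\mapsto|V(t,x(t,\tau,\xi))|$ is continuous, strictly decreasing, tends to $0$ as $t\to+\infty$ and to $+\infty$ as $t\to-\infty$. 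Hence every nonzero orbit of \eqref{nonlin-eq} meets the sphere bundle $\mathcal{S}=\{(t,x):|V(t,x)|=1\}$ at exactly one time, transversally; the orbits of \eqref{lin-eq} do likewise by the strict Lyapunov property, and for fixed $t$ the fibre $\mathcal{S}_t$ is the same set for both systems.

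\emph{Step 3 (the crossing-time conjugacy).} For $\tau\in\mathbb{R}$ and $\xi\neq0$ let $s=T(\tau,\xi)$ be the unique time with $|V(s,x(s,\tau,\xi))|=1$, put $p=x(s,\tau,\xi)\in\mathcal{S}_s$, and define $H_\tau(\xi)=\Psi(\tau,s)p$ and $H_\tau(0)=0$. The cocycle property of the solution operator gives $T(t,x(t,\tau,\xi))=s$, whence $H_t(x(t,\tau,\xi))=\Psi(t,s)p=\Psi(t,\tau)H_\tau(\xi)$, the sought conjugacy. The candidate inverse---flow $\eta$ under $\Psi$ to $\mathcal{S}$, then flow the crossing point back under \eqref{nonlin-eq}---is actually the inverse precisely because $V$ is a Lyapunov function for \emph{both} systems and $\mathcal{S}$ is common to them. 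Continuity of $H_\tau$ away from $0$ comes from continuous dependence on initial data together with the transversal, strictly monotone crossing; continuity at $0$ reduces to $\Psi(\tau,s)p\to0$ as $\xi\to0$, and since then $s\to-\infty$, $\|p\|\le C\mu(s)^{-\epsilon}$, $\tau\ge s$, the dichotomy estimate yields $\|\Psi(\tau,s)p\|\le DC\,\mu(\tau)^{\lambda_s}\mu(s)^{-(\lambda_s+\nu+\epsilon)}\to0$, the exponent being positive once $V$ is chosen compatibly with the nonuniformity constraints of the theorem. A symmetric bound for $\xi\to\infty$ makes $H_\tau$ proper; a proper continuous injection of $\mathbb{R}^{d_s}$ is a homeomorphism, and the same estimates give joint continuity in $(\tau,\xi)$, hence a continuous family $\{H_t\}$.

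\emph{Step 4 (reassembly; the main obstacle).} Carrying out Step~3 on the expansive block as well (with time reversed) and composing the two block conjugacies with $L$ and $L^{-1}$ from Step~1 yields a jointly continuous family of homeomorphisms of $\mathbb{R}^n$ conjugating \eqref{nonlin-eq} to \eqref{lin-eq}. I expect the principal difficulty to be Step~2: establishing that the strict quadratic Lyapunov function of the unperturbed contraction remains strictly monotone along the perturbed flow with the correct decay/growth as $t\to\pm\infty$. This is the only step that uses the smallness of $\delta_f$ and the inequalities $\theta\ge\max\{\nu,\omega\}$, $\lambda_s<\nu-\theta$, $\lambda_u>\theta-\omega$, and the nonuniform weights $\mu(t)^{\pm\mathrm{sign}(t)\epsilon}$ make both these estimates and those of Step~3 markedly more delicate than in the uniform-dichotomy setting.
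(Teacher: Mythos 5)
Your proposal follows essentially the same route as the paper's proof: split via the invariant-manifold/foliation machinery (Lemma~\ref{lem-spli}) into a contractive and an expansive block, build the strict quadratic Lyapunov function $V$ from $S(t)$ (Propositions~\ref{V-thm2} and~\ref{Pro18}), use the unique crossing time of $|V|=1$ to define the conjugating map (your $H_\tau$ is exactly $\mathfrak{F}_s$ in \eqref{TF}, your inverse is $\mathfrak{L}_s$ in \eqref{TL}), and reassemble. The one inaccuracy is the attribution in Step~2: the inequalities $\theta\ge\max\{\nu,\omega\}$, $\lambda_s<\nu-\theta$, $\lambda_u>\theta-\omega$ are spent almost entirely in Step~1, making the Lyapunov--Perron integrals for the manifolds, foliations and splitting converge, whereas the strict monotonicity of $V$ along perturbed orbits in the paper's Lemma~\ref{con-lin} needs only the smallness of $\delta_f$ (to keep $\lambda_u-\delta_f\mu(t)^{-\mathrm{sign}(t)\theta}>0$, hence $\mathfrak{U}(t)>0$); but this is a misplacement of emphasis, not a gap in the argument.
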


\begin{corollary}
Assume that Eq. \eqref{nonlin-eq} admits a nonuniform  exponential dichotomy (i.e., $\mu(t)=e^t$) and that $f(t,x)$ satisfies
  the set $\mathcal{A}_f$ with $\varphi(t)=\delta_f e^{-\theta |t|}$.
Then if
$\theta\ge   \max\{\nu,\omega\}$, $\lambda_s<\nu-\theta$ and $\lambda_u>\theta-\omega$, then
Eq. \eqref{nonlin-eq} is topologically conjugated to Eq. \eqref{lin-eq}.
\end{corollary}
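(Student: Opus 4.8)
The plan is to read off this Corollary as the instance $\mu(t)=e^{t}$ of Theorem \ref{thm-lin}, so the entire proof reduces to checking that, under this choice of growth rate, the hypotheses of the two statements are literally the same. The only elementary fact used repeatedly is the identity $\mathrm{sign}(t)\,t=|t|$, valid for every $t\in\mathbb{R}$.

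First I would record that $\mu(t)=e^{t}$ is a differentiable growth rate in the sense of Definition \ref{u-groga}: it is strictly increasing, $\mu(0)=1$, $\mu(t)\to+\infty$ as $t\to+\infty$, $\mu(t)\to 0$ as $t\to-\infty$, and $\mu'(t)=e^{t}$. Next I would translate the dichotomy: since $\mu(t)/\mu(s)=e^{t-s}$ and $\mu(s)^{\mathrm{sign}(s)\nu}=e^{\nu|s|}$, $\mu(s)^{\mathrm{sign}(s)\omega}=e^{\omega|s|}$, the estimates \eqref{u-dich} defining a nonuniform $\mu$-dichotomy become
\begin{equation*}
\|\Psi(t,s)\pi(s)\|\le D e^{\lambda_{s}(t-s)} e^{\nu|s|}\ \ (t\ge s), \qquad \|\Psi(t,s)(id-\pi(s))\|\le D e^{\lambda_{u}(t-s)} e^{\omega|s|}\ \ (t\le s),
\end{equation*}
which is exactly the classical notion of a nonuniform exponential dichotomy with the same projections $\pi(t)$. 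Hence the standing linear hypothesis of the Corollary coincides with that of Theorem \ref{thm-lin} for $\mu(t)=e^{t}$.

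Then I would translate the admissibility class. Substituting $\mu(t)=e^{t}$ and $\mu'(t)=e^{t}$ into \eqref{ff-Lip},
\begin{equation*}
\varphi(t)=\delta_{f}\,\mu(t)^{-1-\mathrm{sign}(t)\theta}\mu'(t)=\delta_{f}\,e^{(-1-\mathrm{sign}(t)\theta)t}\,e^{t}=\delta_{f}\,e^{-\mathrm{sign}(t)\theta\,t}=\delta_{f}\,e^{-\theta|t|},
\end{equation*}
so the set $\mathcal{A}_{f}$ attached to \eqref{ff-Lip} is precisely the set $\mathcal{A}_{f}$ with $\varphi(t)=\delta_{f}e^{-\theta|t|}$ appearing in the Corollary. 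Finally, the three scalar conditions $\theta\ge\max\{\nu,\omega\}$, $\lambda_{s}<\nu-\theta$ and $\lambda_{u}>\theta-\omega$ are verbatim identical in the two statements. Therefore every hypothesis of Theorem \ref{thm-lin} is met, and applying that theorem yields the topological conjugacy between Eq. \eqref{nonlin-eq} and Eq. \eqref{lin-eq}, which is the assertion of the Corollary.

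Because the whole argument is a substitution, I do not foresee any genuine obstacle; the only points that merit a second glance are the two uses of $\mathrm{sign}(t)\,t=|t|$ (to rewrite $\mu(s)^{\mathrm{sign}(s)\nu}$ as $e^{\nu|s|}$ and to collapse $\mu(t)^{-1-\mathrm{sign}(t)\theta}\mu'(t)$ to $e^{-\theta|t|}$), together with the remark that every ambient ingredient used in the proof of Theorem \ref{thm-lin} — in particular the nonuniform $\mu$-bounded growth \eqref{u-bogr} that enters through Lemma \ref{non-esti} — specializes for $\mu(t)=e^{t}$ to its classical exponential counterpart, so nothing is lost in the passage to the Corollary and no extra condition on $f$ beyond $f\in\mathcal{A}_{f}$ is required.
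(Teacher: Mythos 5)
Your proposal is correct and matches the paper's (implicit) intent exactly: the corollary is the specialization $\mu(t)=e^{t}$ of Theorem \ref{thm-lin}, and you have verified the only nontrivial point, namely that $\varphi(t)=\delta_{f}\mu(t)^{-1-\mathrm{sign}(t)\theta}\mu'(t)$ collapses to $\delta_{f}e^{-\theta|t|}$ and that the $\mu$-dichotomy bounds reduce to the nonuniform exponential ones. Nothing further is required.
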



A special case of  \eqref{nonlin-eq} is the autonomous system
\begin{align}\label{Auto-NL}
x'=Ax+f(x), \quad x\in\mathbb{R}^n,
\end{align}
where $A$ is a $n\times n$ constant matrix and $f(0)=0$. Then by Theorem \ref{thm-lin}, we obtain the following.

\begin{corollary}\label{Cor-GP} {\rm (Generalized Pugh's linearization theorem)}
Let the matrix $A$   be hyperbolic and let $f$ be Lipschitz continuous with a small Lipschitz constant $\delta_f$.
Then Eq. \eqref{Auto-NL} is topologically conjugated to its linear part.
\end{corollary}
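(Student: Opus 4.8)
The plan is to obtain Corollary~\ref{Cor-GP} as a direct specialization of Theorem~\ref{thm-lin}, realizing the autonomous equation \eqref{Auto-NL} as an instance of \eqref{nonlin-eq}. First I would fix the exponential growth rate $\mu(t)=e^t$ and take $\theta=0$ in \eqref{u-bogr} and \eqref{ff-Lip}. Since then $\mu'(t)=e^t$, the weight appearing in \eqref{ff-Lip} collapses to
\begin{equation*}
\varphi(t)=\delta_f\,\mu(t)^{-1}\,\mu'(t)=\delta_f ,\qquad t\in\mathbb{R},
\end{equation*}
so for the time-independent map $f$ the condition $f\in\mathcal{A}_f$ becomes exactly $f(0)=0$ together with $\|f(x)-f(\tilde x)\|\le\delta_f\|x-\tilde x\|$ for all $x,\tilde x\in\mathbb{R}^n$ — precisely the hypothesis of the corollary, and the smallness of $\delta_f$ required in \eqref{ff-Lip} is exactly the ``small Lipschitz constant'' assumption. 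As $f$ is continuous it is trivially of $C^0$-Carath\'{e}odory class.

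Second, I would check that the linear part $x'=Ax$ satisfies Definition~\ref{NUD} with $\mu(t)=e^t$. Hyperbolicity of $A$ means $\sigma(A)\cap i\mathbb{R}=\emptyset$; let $P$ be the Riesz projection of $A$ onto the stable generalized eigenspace $E^s=\bigoplus_{\operatorname{Re}\lambda<0}\ker(A-\lambda I)^n$ and set $\pi(t)\equiv P$. Then $P$ is independent of $t$ and commutes with $\Psi(t,s)=e^{A(t-s)}$, and the usual matrix-exponential bounds give a $D\ge1$ and numbers $\lambda_s,\lambda_u$ with $\max_{\operatorname{Re}\lambda<0}\operatorname{Re}\lambda<\lambda_s<0<\lambda_u<\min_{\operatorname{Re}\lambda>0}\operatorname{Re}\lambda$ such that
\begin{equation*}
\|e^{A(t-s)}P\|\le D\,e^{\lambda_s(t-s)}\ \ (t\ge s),\qquad
\|e^{A(t-s)}(I-P)\|\le D\,e^{\lambda_u(t-s)}\ \ (t\le s).
\end{equation*}
Since $\mu(s)^{\mathrm{sign}(s)\cdot 0}=1$ and $(\mu(t)/\mu(s))^{\lambda_s}=e^{\lambda_s(t-s)}$, these are exactly \eqref{u-dich} with $\nu=\omega=0$, and the structural conditions $\lambda_s+\nu<0$, $\lambda_u-\omega>0$ hold trivially; likewise \eqref{u-bogr} holds with $\theta=0$ and any $\lambda_{\max}>\max_{\lambda\in\sigma(A)}|\operatorname{Re}\lambda|$.

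It then suffices to verify the three numerical hypotheses of Theorem~\ref{thm-lin}: with $\theta=\nu=\omega=0$ they read $0\ge\max\{0,0\}$, $\lambda_s<\nu-\theta=0$ and $\lambda_u>\theta-\omega=0$, all of which hold. Theorem~\ref{thm-lin} then yields that \eqref{Auto-NL} is topologically conjugated to $x'=Ax$, which is the assertion. Since the choice of data is essentially forced — an autonomous hyperbolic linear system genuinely has exponential dichotomy estimates, so $\mu$ must be exponential, and then $f\in\mathcal{A}_f$ forces $\theta=0$ — there is no deep obstacle here; the only points needing attention are the bookkeeping identification of $\delta_f$ with the Lipschitz constant, and, if one wants the conclusion in the classical Pugh form of a single time-independent homeomorphism of $\mathbb{R}^n$ rather than a $t$-dependent conjugacy, the observation that for autonomous hyperbolic $A$ the strict quadratic Lyapunov functions driving the proof of Theorem~\ref{thm-lin} can be chosen time-independent (from the algebraic Lyapunov equations on $E^s$ and $E^u$), whence the crossing-time transformation, and so the conjugacy, may be taken autonomous.
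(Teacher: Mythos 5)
Your proposal is correct and follows the route the paper itself intends: Corollary \ref{Cor-GP} is obtained as a direct specialization of Theorem \ref{thm-lin} with $\mu(t)=e^t$, $\nu=\omega=\theta=0$, the constant spectral projection $\pi(t)\equiv P$, and $\varphi(t)\equiv\delta_f$, and your verification of the parameter inequalities $\theta\ge\max\{\nu,\omega\}$, $\lambda_s<\nu-\theta$, $\lambda_u>\theta-\omega$ is accurate. Your closing remark about upgrading to a single time-independent conjugacy (via autonomous quadratic Lyapunov functions) is a sensible observation, but it goes beyond what the paper claims or proves --- as stated, Theorem \ref{thm-lin} delivers a $\tau$-dependent family $\mathfrak{G}(\tau,\cdot)$, and the Corollary should be read in that sense.
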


We remark that the difference between Pugh's global $C^0$ linearization theorem and Corollary \ref{Cor-GP} is that
our result does not require $f$ to be a bounded function.
By using    the methods of the structural stability of Anosov diffeomorphisms, Pugh (\cite{Pugh})  proved  a global linearization theorem.
Later, this theory was generalized by \cite{BDK-JDE,B-V2,Lu1,Palmer1,ZWM-ETDS}  and the references therein.
As far as we know, the global \(C^0\) linearization results mentioned above require at least that the nonlinear perturbation is globally bounded.
Recently, Lin (\cite{LinFX}), Huerta (\cite{H-DCDS}), Casta\~{n}eda and Robledo (\cite{CR-DCDSA}) respectively
established the global unbounded linearization  results for nonuniform (or uniform) contractive systems, while Wu an Xia  (\cite{WX-DCDS})  proved a partially unbounded linearization result  for uniform hyperbolic nonautonomous systems.
However, the unbounded linearization of hyperbolic systems (especially nonuniform hyperbolic systems) has not yet been completely resolved.

As stated in the Introduction, in order to overcome the technical difficulties in the proof caused by the lack of boundedness,
we first need to decouple the system into a contraction system and an expansion system by combining the theory of invariant manifolds/foliations.
Then, we construct a transformation to linearize contractive/expansive system, which is defined by
the crossing time with respect to the unit sphere of a strict quadratic Lyapunov function
related to a nonunifrom $\mu$-contractive/expansive linear system.

To achieve the two steps mentioned above, in the following, we first establish the existence of global stable and unstable invariant manifolds and invariant foliations for Eq. \eqref{nonlin-eq}, and then we  establish the relationship between strict and/or  quadratic Lyapunov functions and nonuniform $\mu$-dichotomy.

\subsection{Invariant manifolds and invariant foliations for nonuniform $\mu$-dichotomies}

Let Eq. \eqref{lin-eq} admit  a nonuniform $\mu$-dichotomy.
Notice that $\mathcal{X}_\tau^s=\pi(\tau)(\mathbb{R}^n)$ and $\mathcal{X}_\tau^u=(id-\pi(\tau))(\mathbb{R}^n)$  are the stable and unstable subspaces at time $\tau$,
respectively.
Then we have the following results.

\begin{lemma}\label{thm-stab}
Assume that Eq. \eqref{nonlin-eq} admits a nonuniform $\mu$-dichotomy and that $f(t,x)$ satisfies the set $\mathcal{A}_f$ with a small constant $\delta_f>0$.
Then if
$\theta\ge   \max\{\nu,\omega\}$, $\lambda_s<\nu-\theta$ and $\lambda_u>\theta-\omega$, then
Eq. \eqref{nonlin-eq} exhibits   global Lipschitz stable and unstable invariant manifolds, which can be described  as
$$
\text{$\mathcal{G}_s=\{\xi+g_s(\tau,\xi): \tau\in\mathbb{R},\xi\in\mathcal{X}_\tau^s\}$ \,  and \,  $\mathcal{G}_u=\{\xi+g_u(\tau,\xi): \tau\in\mathbb{R},\xi\in\mathcal{X}_\tau^u\}$,}
$$
where $g_s:\mathbb{R}\times \mathcal{X}_\tau^s \to \mathcal{X}_\tau^u$ and  $g_u:\mathbb{R}\times \mathcal{X}_\tau^u \to \mathcal{X}_\tau^s$
are Lipschitz continuous mappings satisfying $g_s(\tau,0)=0$ and $g_u(\tau,0)=0$.
\end{lemma}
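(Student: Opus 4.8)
The plan is to realize both manifolds by the Lyapunov--Perron method: for each base time the corresponding fiber will be the graph of a Lipschitz map obtained as the unique fixed point of an integral operator acting on a Banach space of forward (resp.\ backward) orbits with a weighted supremum norm adapted to \eqref{u-dich}. I will describe the stable manifold $\mathcal{G}_s$ in detail; $\mathcal{G}_u$ is produced by running the same construction for the time-reversed equation, in which $\pi$ and $\mathrm{id}-\pi$, $\lambda_s$ and $\lambda_u$, and $\nu$ and $\omega$ swap roles — in particular the inequality $\lambda_u>\theta-\omega$ there plays the part that $\lambda_s<\nu-\theta$ plays here. Fix $\tau\in\mathbb{R}$ and $\xi\in\mathcal{X}_\tau^s=\pi(\tau)(\mathbb{R}^n)$. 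Since $\varphi$ is locally integrable, \eqref{nonlin-eq} generates a global evolution operator $\Phi(\cdot,\cdot)$, and a solution $x(\cdot)$ of \eqref{nonlin-eq} on $[\tau,+\infty)$ with $\pi(\tau)x(\tau)=\xi$ satisfies $\sup_{t\ge\tau}\|x(t)\|\,\mu(\tau)^{-\mathrm{sign}(\tau)\nu}<+\infty$ if and only if $x=\mathcal{T}_{\tau,\xi}x$, where
\begin{equation*}
(\mathcal{T}_{\tau,\xi}x)(t)=\Psi(t,\tau)\xi+\int_\tau^t\Psi(t,s)\pi(s)f(s,x(s))\,ds-\int_t^{+\infty}\Psi(t,s)(\mathrm{id}-\pi(s))f(s,x(s))\,ds,\qquad t\ge\tau .
\end{equation*}
I would look for this fixed point in the Banach space $\mathcal{B}_\tau$ of those $x\in C([\tau,+\infty),\mathbb{R}^n)$ for which $\|x\|_\tau:=\sup_{t\ge\tau}\|x(t)\|\,\mu(\tau)^{-\mathrm{sign}(\tau)\nu}$ is finite, equipped with $\|\cdot\|_\tau$; the first line of \eqref{u-dich} and $\lambda_s<0$ show that the free term $\Psi(\cdot,\tau)\xi$ lies in $\mathcal{B}_\tau$ with $\|\Psi(\cdot,\tau)\xi\|_\tau\le D\|\xi\|$, and $\mathcal{T}_{\tau,\xi}0=\Psi(\cdot,\tau)\xi$ because $f(s,0)=0$.

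The technical heart is to verify that $\mathcal{T}_{\tau,\xi}$ maps $\mathcal{B}_\tau$ into itself and is a contraction with constant independent of $\tau$ and $\xi$. One inserts \eqref{u-dich}, the bound $\|f(s,u)-f(s,v)\|\le\varphi(s)\|u-v\|$ with $\varphi(s)=\delta_f\,\mu(s)^{-1-\mathrm{sign}(s)\theta}\mu'(s)$, and the estimate $\|x(s)-y(s)\|\le\|x-y\|_\tau\,\mu(\tau)^{\mathrm{sign}(\tau)\nu}$ valid on $[\tau,+\infty)$; the key point is that the change of variables $u=\mu(s)$, $du=\mu'(s)\,ds$, turns each of the two integrals into an elementary integral of a power of $u$. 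On the half-line $\{s>0\}$ the exponent of $u$ equals $-\lambda_s+(\nu-\theta)-1$ for the first (stable) term and $-\lambda_u+(\omega-\theta)-1$ for the second (unstable) term, the signs of $\nu-\theta$ and $\omega-\theta$ being reversed on $\{s<0\}$; the hypothesis $\lambda_s<\nu-\theta$ makes $-\lambda_s+\nu-\theta>0$, so the first (proper) integral is controlled by a multiple of $\mu(t)^{-\lambda_s+\nu-\theta}$, while $\lambda_u>\theta-\omega$ makes $-\lambda_u+\omega-\theta<0$, so the second (improper) integral converges and is controlled by a multiple of $\mu(t)^{-\lambda_u+\omega-\theta}$. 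Restoring the prefactors $\mu(t)^{\lambda_s}$ and $\mu(t)^{\lambda_u}$ from \eqref{u-dich} leaves the powers $\mu(t)^{\nu-\theta}$ and $\mu(t)^{\omega-\theta}$, which are $\le1$ on $[0,+\infty)$ precisely because $\theta\ge\max\{\nu,\omega\}$; splitting the integrals at $s=0$ when $\tau<0$ and using $0<\mu(t)\le1$ for $t\le0$ disposes of the remaining range with the same bounds. This gives $\|\mathcal{T}_{\tau,\xi}x-\mathcal{T}_{\tau,\xi}y\|_\tau\le K\delta_f\|x-y\|_\tau$ with $K=K(D,\lambda_s,\lambda_u,\nu,\omega,\theta)$; choosing $\delta_f$ small so that $K\delta_f<1$, the contraction principle yields a unique $x(\cdot;\tau,\xi)\in\mathcal{B}_\tau$, and the same estimate applied to the difference of free terms gives $\|x(\cdot;\tau,\xi)-x(\cdot;\tau,\tilde\xi)\|_\tau\le(1-K\delta_f)^{-1}D\|\xi-\tilde\xi\|$.

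I would then define
\begin{equation*}
g_s(\tau,\xi):=(\mathrm{id}-\pi(\tau))\,x(\tau;\tau,\xi)=-\int_\tau^{+\infty}\Psi(\tau,s)(\mathrm{id}-\pi(s))f(s,x(s;\tau,\xi))\,ds\ \in\ \mathcal{X}_\tau^u ,
\end{equation*}
so that $\mathcal{G}_s$ is exactly as in the statement. Since $f(s,0)=0$ the zero orbit is the fixed point for $\xi=0$, whence $g_s(\tau,0)=0$; the Lipschitz dependence of $x(\cdot;\tau,\xi)$ on $\xi$ together with the estimate of the single integral defining $g_s$ (once more a convergent $u=\mu(s)$ power integral) gives global Lipschitz continuity of $\xi\mapsto g_s(\tau,\xi)$ with a constant of order $D\delta_f$. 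Invariance of $\mathcal{G}_s$ under $\Phi$ is the standard consequence of uniqueness of the fixed point: the restriction of $x(\cdot;\tau,\xi)$ to $[t,+\infty)$ solves the fixed-point equation based at $t$ with initial stable component $\pi(t)\Phi(t,\tau)(\xi+g_s(\tau,\xi))$, so $\Phi(t,\tau)(\xi+g_s(\tau,\xi))\in\mathcal{G}_s$; since $A(t)\in GL(n,\mathbb{R})$ makes $\Phi(t,\tau)$ a homeomorphism, $\Phi(t,\tau)\mathcal{G}_s=\mathcal{G}_s$ for all $t,\tau$, and $\mathcal{G}_u$ is treated symmetrically.

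I expect the main obstacle to be exactly the bookkeeping in the contraction step: the weight $\mu(\tau)^{\mathrm{sign}(\tau)\nu}$ must be chosen so that, simultaneously, the free term lies in $\mathcal{B}_\tau$, both Lyapunov--Perron integrals are finite, and the output divided by the weight is bounded uniformly in $t\ge\tau$ \emph{and} in $\tau\in\mathbb{R}$; it is the uniformity in $\tau$ that forces the splitting of each integral at $s=0$ and the systematic use of $0<\mu(t)\le1$ for $t\le0$. Checking that the three inequalities $\theta\ge\max\{\nu,\omega\}$, $\lambda_s<\nu-\theta$ and $\lambda_u>\theta-\omega$ are exactly what close this loop is the only delicate point; everything else is the classical contraction-and-uniqueness machinery, and I would relegate the detailed estimates to an appendix.
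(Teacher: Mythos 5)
Your proposal follows the same Lyapunov--Perron route as the paper: same fixed-point operator, same Banach space (your weight $\mu(\tau)^{-\mathrm{sign}(\tau)\nu}$ is a constant for fixed $\tau$, so $\mathcal{B}_\tau$ coincides with the paper's $(\mathcal{H},\|\cdot\|_{\mathcal{H}})$ up to rescaling), the same change of variable $u=\mu(s)$ to evaluate both integrals, the same role for the three hypotheses $\theta\ge\max\{\nu,\omega\}$, $\lambda_s<\nu-\theta$, $\lambda_u>\theta-\omega$, and the same definition of $g_s$ as the unstable component of the fixed orbit. The argument is correct and matches the paper's proof in Appendix B essentially line by line.
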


From Lemma \ref{thm-stab}, it follows that there exist a stable manifold $\mathcal{G}_s$ and an unstable manifold $\mathcal{G}_u$ for Eq. \eqref{nonlin-eq}.
They are graphs of $C^{0,1}$  functions  $g_s(t,\cdot):  \mathcal{X}_\tau^s \to \mathcal{X}_\tau^u$ and $g_u(t,\cdot): \mathcal{X}_\tau^u \to \mathcal{X}_\tau^s$
respectively satisfying $g_s(t,0)=g_u(t,0)=0$ for all $t\in\mathbb{R}$.
Then by using two $C^{0,1}$ transformations
$$
\text{$\Xi_s(t,x_s,x_u)=(t,x_s,x_u-g_s(t,x_s))$ \, and \, $\Xi_u(t,x_s,x_u)=(t,x_s-g_u(t,x_u),x_u)$,}
$$
one can straighten up the stable and unstable manifolds. This enable us to assume that
\begin{align*}
\text{$\pi(t)f(t,(id-\pi(t))x(t))=0$ \, and \, $(id-\pi(t))f(t,\pi(t)x(t))=0$.}
\end{align*}

\begin{lemma}\label{thm-foli}
Assume that Eq. \eqref{nonlin-eq} admits a nonuniform $\mu$-dichotomy and that $f(t,x)$ satisfies the set $\mathcal{A}_f$ with a small constant $\delta_f>0$.
Then if
$\theta\ge   \max\{\nu,\omega\}$, $\lambda_s<\nu-\theta$ and $\lambda_u>\theta-\omega$, then
Eq. \eqref{nonlin-eq} exhibits   global   stable and unstable invariant foliations,  whose leaves
defined as
$$
\text{$\mathcal{W}_s(\tau,x)=\{\zeta+h_s(\tau,\zeta,x): \tau\in\mathbb{R},\zeta \in\mathcal{X}_\tau^s\}$ \,  and \,  $\mathcal{W}_u(\tau,x)=\{\zeta+h_u(\tau,\zeta,x): \tau\in\mathbb{R},\zeta \in\mathcal{X}_\tau^u\}$,}
$$
where   $h_s:\mathbb{R}\times \mathcal{X}_\tau^s \times \mathbb{R}^n \to \mathcal{X}_\tau^u$ and
$h_u:\mathbb{R}\times \mathcal{X}_\tau^u \times \mathbb{R}^n \to \mathcal{X}_\tau^s$
are  continuous in both variables $(\zeta,x)$ and are Lipschitz continuous in variable $\zeta$, respectively.
\end{lemma}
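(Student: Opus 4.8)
The plan is to realize each leaf of the stable foliation as the forward-stable set of the nonautonomous ``variational'' equation attached to a base solution, and to produce it by a Lyapunov--Perron fixed-point scheme carried out uniformly in the base point. Fix $(\tau,x)\in\mathbb{R}\times\mathbb{R}^n$ and let $x(t)=x(t,\tau,x)$ be the corresponding solution of \eqref{nonlin-eq}, which is globally defined by Lemma \ref{non-esti}. If $z$ is any solution of \eqref{nonlin-eq} and $w:=z-x(\cdot)$, then $w$ solves $w'=A(t)w+F_x(t,w)$, where $F_x(t,w):=f(t,x(t)+w)-f(t,x(t))$ satisfies $F_x(t,0)=0$ and $\|F_x(t,w)-F_x(t,\tilde w)\|\le\varphi(t)\|w-\tilde w\|$ with $\varphi$ as in \eqref{ff-Lip}, \emph{independently of $x$}. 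Hence the problem reduces to constructing the global Lipschitz stable manifold of the origin for the $w$-equation, the base point $x$ entering only as a parameter in $F_x$; note that this step uses only the Lipschitz structure of $f$ and never its (absent) boundedness.

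For $\zeta\in\mathcal{X}_\tau^s=\pi(\tau)(\mathbb{R}^n)$ I would seek a solution $w$ of the $w$-equation on $[\tau,+\infty)$ with $\pi(\tau)w(\tau)=\zeta$ and controlled forward growth, that is, a fixed point of
\begin{align*}
(\mathcal{T}_{x,\zeta}w)(t)=\Psi(t,\tau)\zeta+\int_\tau^t\Psi(t,s)\pi(s)F_x(s,w(s))\,ds-\int_t^{+\infty}\Psi(t,s)(\mathrm{id}-\pi(s))F_x(s,w(s))\,ds
\end{align*}
on the Banach space $\mathcal{B}_\tau$ of continuous maps $w:[\tau,+\infty)\to\mathbb{R}^n$ with
\[
\|w\|_\star:=\sup_{t\ge\tau}\|w(t)\|\left(\frac{\mu(t)}{\mu(\tau)}\right)^{-\gamma}\mu(t)^{-\mathrm{sign}(t)\theta}<\infty ,
\]
for a fixed $\gamma\in(\lambda_s,0)$ (possible since $\lambda_s<\nu-\theta\le0$). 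Using the dichotomy bounds \eqref{u-dich} and the calibration in \eqref{ff-Lip} --- the factor $\mu'(t)$ turns the relevant integrands into exact differentials of powers of $\mu$, while the factor $\mu(t)^{-1-\mathrm{sign}(t)\theta}$ absorbs the nonuniform amplitudes $\mu(s)^{\mathrm{sign}(s)\nu}$ and $\mu(s)^{\mathrm{sign}(s)\omega}$, and here the hypotheses $\theta\ge\max\{\nu,\omega\}$, $\lambda_s<\nu-\theta$, $\lambda_u>\theta-\omega$ are exactly what makes the arithmetic of exponents close --- one checks that $\mathcal{T}_{x,\zeta}$ maps $\mathcal{B}_\tau$ into itself and is a contraction with constant $q=O(\delta_f)<1$ once $\delta_f$ is small. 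Let $w_{x,\zeta}\in\mathcal{B}_\tau$ be the unique fixed point. The leaf through $x$ is then $\{\,x+w_{x,\zeta}(\tau):\zeta\in\mathcal{X}_\tau^s\}$; since $\pi(\tau)(x+w_{x,\zeta}(\tau))=\pi(\tau)x+\zeta$ sweeps out all of $\mathcal{X}_\tau^s$, this set is the graph over $\mathcal{X}_\tau^s$ of a map $h_s(\tau,\cdot,x):\mathcal{X}_\tau^s\to\mathcal{X}_\tau^u$, which yields $\mathcal{W}_s(\tau,x)=\{\zeta+h_s(\tau,\zeta,x):\zeta\in\mathcal{X}_\tau^s\}$.

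Regularity and invariance then follow from standard uniform-contraction-with-parameters arguments. Since $\zeta\mapsto\mathcal{T}_{x,\zeta}$ is affine with $\zeta$-independent contraction constant $q<1$, the fixed point $\zeta\mapsto w_{x,\zeta}$ is globally Lipschitz, and hence so is $\zeta\mapsto h_s(\tau,\zeta,x)$; joint continuity of $(\zeta,x)\mapsto w_{x,\zeta}$ --- and therefore continuity of $h_s$ in $(\zeta,x)$ --- follows from the uniform contraction together with continuity of $x\mapsto x(\cdot,\tau,x)$ on compact $t$-intervals, the tails being controlled by Lemma \ref{non-esti} and the weight $\|\cdot\|_\star$. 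The invariance $\Psi(t,\tau)\mathcal{W}_s(\tau,x)\subset\mathcal{W}_s(t,x(t,\tau,x))$ for $t\ge\tau$ (and the analogous statement for $\mathcal{W}_u$) is obtained from uniqueness of the Lyapunov--Perron solution together with the cocycle identities for $\Psi$ and for the solution map of \eqref{nonlin-eq}. The unstable foliation is produced by the same scheme run in negative time: integrate over $(-\infty,\tau]$, use the second estimate in \eqref{u-dich}, and interchange the roles of $\pi$ and $\mathrm{id}-\pi$, of $\lambda_s$ and $\lambda_u$, and of $\nu$ and $\omega$.

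The main obstacle is the weighted integral estimate underlying the claim that $\mathcal{T}_{x,\zeta}$ is well defined (convergence of the improper integral) and contractive on $\mathcal{B}_\tau$: because of the nonuniform amplitudes $\mu(s)^{\mathrm{sign}(s)\nu},\mu(s)^{\mathrm{sign}(s)\omega}$, the jump of $\mathrm{sign}(\cdot)$ at $0$, and the precise shape of $\varphi$, this requires a case analysis according to the signs of $t$, $s$, and $t-s$. I would package these inequalities as a separate lemma (in the spirit of Appendices A--C) and then feed them into the contraction argument above; everything else is routine.
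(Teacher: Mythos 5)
Your proposal follows the same Lyapunov--Perron scheme as the paper: pass to the difference variable ($w=z-x(\cdot)$ in your notation, $p(t)=x(t,\tau,x)-x(t,\tau,y)$ in the paper's), observe that the resulting perturbation $F_x$ inherits the Lipschitz bound of $f$ uniformly in the base point, solve the fixed-point equation over the stable/unstable splitting, and read off $h_s$ from the graph; Lipschitz in $\zeta$ is then immediate from the $\zeta$-uniform contraction constant. That is exactly the paper's proof in Appendix~B, so there is no genuine methodological difference --- the paper works with the plain sup-norm space $\mathcal{H}$ first and brings in the weighted space $\mathcal{H}_\varrho$ only as an auxiliary tool, while you set up the fixed point directly in a weighted space $\mathcal{B}_\tau$, but this is a presentational choice rather than a different route.

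Where I would push back is on the continuity in $x$, which you dispatch with ``uniform contraction $+$ continuity on compacts $+$ tails controlled by Lemma~\ref{non-esti} and the weight.'' This is the genuinely delicate step, and your description glosses over it in two ways. First, the uniform contraction principle requires $x\mapsto\mathcal{T}_{x,\zeta}(w)$ to be continuous in the fixed-point norm, and that is precisely what fails naively: the difference $\|F_x(s,w(s))-F_{\tilde x}(s,w(s))\|$ is controlled only by $\min\{2\varphi(s)\|x(s)-\tilde x(s)\|,\,2\varphi(s)\|w(s)\|\}$, and by Lemma~\ref{non-esti} the quantity $\|x(s)-\tilde x(s)\|$ \emph{grows} like $(\mu(s)/\mu(\tau))^{\lambda_{\max}+D\delta_f}$, so citing Lemma~\ref{non-esti} for ``tail control'' points in the wrong direction --- that lemma is what creates the obstruction, not what resolves it. What rescues the argument (and is spelled out in the paper) is the uniform decay $\|p_t\|\lesssim(\mu(\tau)/\mu(t))^{\varrho}$ of the fixed point itself, which makes the second term of the min small on a uniform tail $[\varsigma,\infty)$, while on the compact piece $[\tau,\varsigma)$ one uses continuous dependence of $x(\cdot,\tau,x)$. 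Second, with your specific weight $(\mu(t)/\mu(\tau))^{-\gamma}\mu(t)^{-\mathrm{sign}(t)\theta}$ and $\gamma\in(\lambda_s,0)$, the bound $\|w(t)\|\le\|w\|_\star\,\mu(t)^{\gamma+\theta}\mu(\tau)^{-\gamma}$ for large $t>0$ only decays if $\gamma<-\theta$, but the hypotheses give $\lambda_s<\nu-\theta$, not $\lambda_s<-\theta$, so when $\nu>0$ your range for $\gamma$ may not admit a choice that yields tail decay. Dropping the extraneous factor $\mu(t)^{-\mathrm{sign}(t)\theta}$ (i.e.\ using the paper's $\mathcal{H}_\varrho$-type weight with a positive exponent) avoids this pitfall. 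With that repair and the min-trick written out explicitly, your argument coincides with the paper's.
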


We will employ the classical Lyapunov-Perron method to prove the existence of the stable/unstable manifold and stable/unstable foliation,
which has been repeatedly mentioned in the  recent literature such as \cite{BSV-JLMS,BS-JFA,JDDE-BS,ZWN-AM}.
Thus, we put the proof of  Lemmas \ref{thm-stab} and \ref{thm-foli} in Appendix \ref{App-3}.

\subsection{Lyapunov functions and nonuniform $\mu$-dichotomies}

In this subsection, we study the relation between nonuniform $\mu$-dichotomy and strict quadratic Lyapunov functions.
More precisely,  we obtain the existence of a strict Lyapunov function  provided that Eq. \eqref{lin-eq} admits a nonuniform $\mu$-dichotomy
 (see Proposition \ref{V1-thm}).
Moreover, under the assumption of bounded growth, the existence of  a strict quadratic Lyapunov function is further established
(see Proposition \ref{V-thm2}).
 In particular, by constructing a explicit   strict quadratic Lyapunov function, it is proved that Eq. \eqref{lin-eq} implies nonuniform $\mu$-dichotomy
(see Proposition \ref{thm-ud}).

\begin{proposition}\label{V1-thm}
Suppose that Eq. \eqref{lin-eq} admits a nonuniform $\mu$-dichotomy. Then it exists a strict Lyapunov function.
\end{proposition}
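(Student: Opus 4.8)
The plan is to build $V$ not by integrating the dichotomy estimates (an integral construction forces a bounded-growth hypothesis in order to bound $|V|$ below by $\|\cdot\|$, which is precisely why the quadratic version, Proposition \ref{V-thm2}, needs it) but from \emph{adapted norms} on the fibres $\mathcal{X}^s_t$ and $\mathcal{X}^u_t$ of the $\mu$-dichotomy. These adapted norms absorb the uniform exponents $\lambda_s,\lambda_u$ and leave only the nonuniform factors $\mu(t)^{\mathrm{sign}(t)\nu}$, $\mu(t)^{\mathrm{sign}(t)\omega}$ in the comparison with the Euclidean norm. Concretely, fix constants $a\in(\lambda_s,0)$ and $b\in(0,\lambda_u)$ and, for $(t,x)\in\mathbb{R}\times\mathbb{R}^n$, set
\[
N_s(t,x):=\sup_{s\ge t}\bigl(\|\Psi(s,t)\pi(t)x\|\,(\mu(s)/\mu(t))^{-a}\bigr),\qquad N_u(t,x):=\sup_{s\le t}\bigl(\|\Psi(s,t)(id-\pi(t))x\|\,(\mu(t)/\mu(s))^{b}\bigr),
\]
and then $U(t,x):=N_s(t,x)^2-N_u(t,x)^2$, $V(t,x):=-\mathrm{sign}\,U(t,x)\sqrt{|U(t,x)|}$. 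Using each inequality in \eqref{u-dich} in the single time direction in which it holds ($s\ge t$ for the stable bound, $s\le t$ for the unstable one), I would first check that both suprema are finite and satisfy
\[
\|\pi(t)x\|\le N_s(t,x)\le D\,\mu(t)^{\mathrm{sign}(t)\nu}\|\pi(t)x\|,\qquad \|(id-\pi(t))x\|\le N_u(t,x)\le D\,\mu(t)^{\mathrm{sign}(t)\omega}\|(id-\pi(t))x\|,
\]
the lower bounds being the values at $s=t$.

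Next I would show $V$ is a Lyapunov function in the sense of Definition \ref{Lp-func}. Continuity of $V$ reduces to continuity of $N_s,N_u$, and the bounds above let each supremum be taken over a compact $s$-interval varying locally uniformly in $(t,x)$, so joint continuity follows from that of $(s,t,x)\mapsto\Psi(s,t)\pi(t)x$. Since $V<0$ precisely where $N_s>N_u$ and $V>0$ precisely where $N_s<N_u$, we get $\mathcal{X}^s_\tau\subseteq\mathcal{C}^s(V_\tau)$ and $\mathcal{X}^u_\tau\subseteq\mathcal{C}^u(V_\tau)$; and any subspace of dimension exceeding $d_s$ (resp.\ $d_u$) meets $\mathcal{X}^u_\tau$ (resp.\ $\mathcal{X}^s_\tau$) in a nonzero vector, at which $N_s=0<N_u$ (resp.\ $N_u=0<N_s$), so it cannot lie in $\mathcal{C}^s(V_\tau)$ (resp.\ $\mathcal{C}^u(V_\tau)$); hence $d_s,d_u$ are the maximal dimensions, which is Definition \ref{Lp-func}(1). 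For Definition \ref{Lp-func}(2) I would use that the evolution preserves the splitting $\mathbb{R}^n=\mathcal{X}^s_t\oplus\mathcal{X}^u_t$ and that as $t$ grows the index set $\{s\ge t\}$ in $N_s$ shrinks while $\{s\le t\}$ in $N_u$ grows; with the cocycle identity $\pi(t)\Psi(t,\tau)=\Psi(t,\tau)\pi(\tau)$ this gives, for $t\ge\tau$,
\[
N_s(t,\Psi(t,\tau)x)\le(\mu(t)/\mu(\tau))^{a}\,N_s(\tau,x),\qquad N_u(t,\Psi(t,\tau)x)\ge(\mu(t)/\mu(\tau))^{b}\,N_u(\tau,x),
\]
hence $U(t,\Psi(t,\tau)x)\le U(\tau,x)$, and since $u\mapsto-\mathrm{sign}(u)\sqrt{|u|}$ is decreasing, $V(t,\Psi(t,\tau)x)\ge V(\tau,x)$.

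Finally I would extract the strict estimates. If $x$ has a nonzero unstable component, the growth inequality above and the first norm comparison give $N_u(t,\Psi(t,\tau)x)\to+\infty$ and $N_s(t,\Psi(t,\tau)x)\to0$ as $t\to+\infty$, so $U\to-\infty$ and $x\notin\mathcal{E}^s_\tau$; symmetrically, applying the same inequalities over $[t,\tau]$, one gets $U(t,\Psi(t,\tau)x)\to+\infty$ as $t\to-\infty$ when $x$ has a nonzero stable component, so $x\notin\mathcal{E}^u_\tau$. With the trivial inclusions this yields $\mathcal{E}^s_\tau=\mathcal{X}^s_\tau$ and $\mathcal{E}^u_\tau=\mathcal{X}^u_\tau$. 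On $\mathcal{E}^u_\tau$ one has $V=N_u\ge0$, so the growth inequality reads $V(t,\Psi(t,\tau)x)\ge(\mu(\tau)/\mu(t))^{-b}V(\tau,x)$, i.e.\ Definition \ref{str-V}(1) with $\alpha=-b<0$; on $\mathcal{E}^s_\tau$ one has $|V|=N_s$, so the decay inequality reads $|V(t,\Psi(t,\tau)x)|\le(\mu(t)/\mu(\tau))^{a}|V(\tau,x)|$, i.e.\ Definition \ref{str-V}(2) with $\beta=a<0$. The norm comparisons also give $|V(\tau,x)|\le C\mu(\tau)^{\mathrm{sign}(\tau)\epsilon}\|x\|$ for every $x$ and $|V(\tau,x)|\ge\|x\|\ge\mu(\tau)^{-\mathrm{sign}(\tau)\epsilon}\|x\|/C$ for $x\in\mathcal{E}^s_\tau\cup\mathcal{E}^u_\tau$, with $\epsilon:=2\max\{\nu,\omega\}$ and $C$ a suitable multiple of $D^2$, so \eqref{u-grow} and Definition \ref{str-V}(3) hold. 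I expect the main obstacle to be the pair of claims ``both suprema are finite'' and ``the nonuniform dichotomy becomes a strict dichotomy with uniform $\mu$-rate once measured by $N_s,N_u$'': this is exactly where the asymmetry of \eqref{u-dich} (each estimate valid only for $t\ge s$ or only for $t\le s$) must be used, and where the choices $a\in(\lambda_s,0)$, $b\in(0,\lambda_u)$ are forced; a lesser nuisance is the continuity of $V$, handled by the compact-interval localization above. None of this invokes nonuniform $\mu$-bounded growth, which is why Proposition \ref{V1-thm} does not need it, at the cost that the $V$ produced here is in general not of the quadratic form required by Definition \ref{quad-V}.
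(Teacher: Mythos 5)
Your construction is essentially the paper's own: it builds $V$ from weighted suprema $N_s(t,x)=\sup_{s\ge t}\|\Psi(s,t)\pi(t)x\|(\mu(s)/\mu(t))^{-a}$ and $N_u(t,x)=\sup_{s\le t}\|\Psi(s,t)(id-\pi(t))x\|(\mu(t)/\mu(s))^{b}$, exactly as the paper uses $V_s,V_u$ with $a=\lambda_s$, $b=\lambda_u$ (your strict choices $a\in(\lambda_s,0)$, $b\in(0,\lambda_u)$ are harmless but unnecessary). The only cosmetic difference is packaging $V$ as $-\mathrm{sign}(N_s^2-N_u^2)\sqrt{|N_s^2-N_u^2|}$ rather than as $V_u-V_s$; both satisfy Definitions \ref{Lp-func} and \ref{str-V}, and your verification of Definition \ref{Lp-func}(1) and the lower bound in Definition \ref{str-V}(3) is in fact cleaner than the paper's.
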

%

\begin{proposition}\label{V-thm2}
Suppose that Eq. \eqref{lin-eq} admits a nonuniform $\mu$-dichotomy. If  
there are constants $\tilde{D}\ge 1$, $c>0$ and $\tilde{\lambda}>0$
such that
\begin{align}\label{bou-u}
\|\Psi(t,s)\| \le \tilde{D} \mu(|t|)^{\tilde{\lambda}}, \quad  \forall t\in\mathbb{R},s\in[t-c,t+c],
\end{align}
then there is a  strict quadratic Lyapunov function for Eq. \eqref{lin-eq}.
\end{proposition}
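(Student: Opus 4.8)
The plan is to exhibit an \emph{explicit} strict quadratic Lyapunov function, the novelty lying in choosing the integration weights adapted to the growth rate $\mu$. Writing $\pi(t)$ for the dichotomy projection and $\mathcal{X}^s_t=\pi(t)(\mathbb{R}^n)$, $\mathcal{X}^u_t=(id-\pi(t))(\mathbb{R}^n)$, and fixing small constants $\kappa,\rho>0$ with $\lambda_s+\kappa<0$ and $\rho<\lambda_u$, I would set, for $t\in\mathbb{R}$ and $x\in\mathbb{R}^n$,
\[
\begin{aligned}
U(t,x):={}&\int_t^{+\infty}\|\Psi(\sigma,t)\pi(t)x\|^2\Big(\frac{\mu(\sigma)}{\mu(t)}\Big)^{2\kappa}\frac{\mu'(\sigma)}{\mu(\sigma)}\,d\sigma\\
&-\int_{-\infty}^{t}\|\Psi(\sigma,t)(id-\pi(t))x\|^2\Big(\frac{\mu(t)}{\mu(\sigma)}\Big)^{2\rho}\frac{\mu'(\sigma)}{\mu(\sigma)}\,d\sigma,
\end{aligned}
\]
and $V(t,x)=-\mathrm{sign}\,U(t,x)\sqrt{|U(t,x)|}$ as in Definition \ref{quad-V}. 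Both improper integrals converge absolutely by \eqref{u-dich}: the exponents $\lambda_s+\kappa<0$ and $\lambda_u-\rho>0$ control the tails in $\sigma$, while the $\sigma$-independent nonuniformity factors $\mu(t)^{\mathrm{sign}(t)\nu}$, $\mu(t)^{\mathrm{sign}(t)\omega}$ are harmless. Since each integrand is a quadratic form in $x$, one gets $U(t,x)=\langle S(t)x,x\rangle$ for a unique symmetric $S(t)$.

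The elementary properties come next. As $\mathbb{R}^n=\mathcal{X}^s_t\oplus\mathcal{X}^u_t$ and the projections are linear, $U(t,\cdot)$ carries no cross term between $\mathcal{X}^s_t$ and $\mathcal{X}^u_t$, is strictly positive on $\mathcal{X}^s_t\setminus\{0\}$ and strictly negative on $\mathcal{X}^u_t\setminus\{0\}$; hence its polarization is nondegenerate, $S(t)\in GL(n,\mathbb{R})$, and $V$ is a genuine quadratic function. Using $\|\pi(t)\|,\|id-\pi(t)\|\le D\mu(t)^{\mathrm{sign}(t)\max\{\nu,\omega\}}$ and the finiteness of the integrals yields $|U(t,x)|\le C^2\mu(t)^{2\,\mathrm{sign}(t)\epsilon}\|x\|^2$, hence $|V(t,x)|\le C\mu(t)^{\mathrm{sign}(t)\epsilon}\|x\|$, i.e.\ \eqref{u-grow}. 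By the cocycle identity $\Psi(\sigma,t)\pi(t)\Psi(t,\tau)=\Psi(\sigma,\tau)\pi(\tau)$, for fixed $\tau,x$ the first integral in $U(t,\Psi(t,\tau)x)$ equals $\mu(t)^{-2\kappa}$ times a nonincreasing function of $t$ (so it is nonincreasing) and the second equals $\mu(t)^{2\rho}$ times a nondecreasing function of $t$ (so it is nondecreasing); thus $t\mapsto U(t,\Psi(t,\tau)x)$ is nonincreasing, and since $u\mapsto-\mathrm{sign}(u)\sqrt{|u|}$ decreases, $t\mapsto V(t,\Psi(t,\tau)x)$ is nondecreasing, which is condition (2) of Definition \ref{Lp-func}; condition (1) is a dimension count (any subspace on which $U(t,\cdot)\le 0$ meets $\mathcal{X}^s_t$ trivially, and dually). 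Finally $\mathcal{E}^u_\tau=\mathcal{X}^u_\tau$, $\mathcal{E}^s_\tau=\mathcal{X}^s_\tau$: if $x\in\mathcal{X}^u_\tau$ the first integral in $U(t,\Psi(t,\tau)x)$ vanishes for all $t$, whereas if $\pi(\tau)x\neq 0$ the factor $\mu(t)^{-2\kappa}$ drives the first integral to $+\infty$ as $t\to-\infty$ while the second tends to $0$, forcing $U>0$ there; the stable case is dual.

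The strict estimates of Definition \ref{str-V}(1)--(2) are where the $t$-dependent weights pay off, and—tellingly—no dichotomy bound is needed. For $x\in\mathcal{X}^s_\tau$ and $t\ge\tau$, substituting $\Psi(\sigma,t)\Psi(t,\tau)x=\Psi(\sigma,\tau)x$ and factoring the ratio out of the weight gives
\[
U(t,\Psi(t,\tau)x)=\Big(\frac{\mu(\tau)}{\mu(t)}\Big)^{2\kappa}\int_t^{+\infty}\|\Psi(\sigma,\tau)x\|^2\Big(\frac{\mu(\sigma)}{\mu(\tau)}\Big)^{2\kappa}\frac{\mu'(\sigma)}{\mu(\sigma)}\,d\sigma\le\Big(\frac{\mu(t)}{\mu(\tau)}\Big)^{-2\kappa}U(\tau,x),
\]
because the last integral runs over $[t,+\infty)\subseteq[\tau,+\infty)$; taking square roots gives (2) with $\beta=-\kappa<0$. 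The mirror computation on the unstable block gives $|U(t,\Psi(t,\tau)x)|\ge(\mu(t)/\mu(\tau))^{2\rho}|U(\tau,x)|$ for $x\in\mathcal{X}^u_\tau$, i.e.\ (1) with $\alpha=-\rho<0$.

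The hard part will be Definition \ref{str-V}(3), the lower bound $|V(\tau,x)|\ge\mu(\tau)^{-\mathrm{sign}(\tau)\epsilon}\|x\|/C$ on $\mathcal{X}^s_\tau\cup\mathcal{X}^u_\tau$, and this is the unique place where the bounded-growth hypothesis \eqref{bou-u} is indispensable. For $x\in\mathcal{X}^s_\tau$ I would truncate the first integral defining $U(\tau,x)$ to $[\tau,\tau+c]$, bound the weight below by $1$ there, and invoke \eqref{bou-u} in the form $\|\Psi(\sigma,\tau)x\|\ge\|x\|/\|\Psi(\tau,\sigma)\|\ge\|x\|/(\tilde D\mu(|\tau|)^{\tilde\lambda})$ for $|\sigma-\tau|\le c$, to obtain $|U(\tau,x)|\ge\tilde D^{-2}\mu(|\tau|)^{-2\tilde\lambda}\ln\!\big(\mu(\tau+c)/\mu(\tau)\big)\|x\|^2$, with a symmetric estimate on $[\tau-c,\tau]$ for $x\in\mathcal{X}^u_\tau$. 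The delicate step—the one I expect to cost the most effort—is to absorb this $\tau$-dependent prefactor into the prescribed shape $\mu(\tau)^{-2\,\mathrm{sign}(\tau)\epsilon}/C^2$; the admissible $\epsilon$ is governed by $\tilde\lambda$ together with how slowly $\mu$ may vary, and it is precisely here that a bounded-growth control cannot be dispensed with, since otherwise $\|\Psi(\tau,\sigma)\|$ is uncontrolled near $\sigma=\tau$ and $S(\tau)$ may degenerate as $|\tau|\to\infty$ (which is also why this hypothesis is absent from Proposition \ref{V1-thm} but needed here). Enlarging $C$ and $\epsilon$ so as to accommodate \eqref{u-grow} and (1)--(3) simultaneously then shows that $V$ is a strict quadratic Lyapunov function for \eqref{lin-eq}, which is the assertion of Proposition \ref{V-thm2}.
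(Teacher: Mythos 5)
Your construction is, up to renaming, exactly the one used in the paper: with $\kappa=-(\lambda_s+\eta)$ and $\rho=\lambda_u-\eta$ your $U(t,x)$ coincides with $\langle S(t)x,x\rangle$ for the paper's $S(t)$, and the verification of \eqref{u-grow}, of Definition~\ref{Lp-func}(2) (monotonicity of $t\mapsto U(t,\Psi(t,\tau)x)$ by factoring the weight ratio), of Definition~\ref{str-V}(1)--(2) via the same factoring, and of Definition~\ref{str-V}(3) by truncating the improper integral to an interval of length comparable to $c$ and applying \eqref{bou-u} to bound $\|\Psi(\sigma,\tau)x\|$ from below, is the same strategy. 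You also correctly identified where the bounded-growth hypothesis enters and why it cannot be dropped.

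The one genuine structural difference is in how Definition~\ref{str-V}(1)--(2) is proved. You restrict to $x\in\mathcal{X}_\tau^{s/u}$ (so that one of the two integrals vanishes identically) and then assert $\mathcal{E}_\tau^{s/u}=\mathcal{X}_\tau^{s/u}$ by a limiting argument; that assertion is plausible but your justification is informal, and in particular the convergence of the ``opposite-side'' integral as $t\to\mp\infty$ needs the constraints $\lambda_s+\kappa<0$, $\rho<\lambda_u$ to be invoked there as well. The paper sidesteps this entirely: it decomposes an arbitrary $x\in\mathcal{E}_\tau^{s}$ (resp.\ $\mathcal{E}_\tau^u$) as $x=x_s+x_u$, estimates both integrals, and then uses $\lambda_u-\eta>\lambda_s+\eta$ (equivalently $\rho>-\kappa$, automatic in your notation) together with $U(\tau,x)\ge 0$ to pull out a single common weight ratio. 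This avoids any identification of $\mathcal{E}^{s/u}_\tau$ with $\mathcal{X}^{s/u}_\tau$ and keeps the two directions coupled through a single $\eta$. Either route leads to the result; the paper's is a little more robust since it never needs a characterization of $\mathcal{E}_\tau^{s/u}$, while yours is cleaner when the identification holds. For condition~(3) the paper compares $U(\tau,x)$ with $U(\tau\mp 1,\Psi(\tau\mp 1,\tau)x)$ before truncating, which produces $\tau$-dependent constants of a slightly different shape than your $\ln(\mu(\tau+c)/\mu(\tau))$, but the mechanism is identical, and your observation that absorbing these $\tau$-dependent prefactors into $\mu(\tau)^{-\mathrm{sign}(\tau)\epsilon}/C$ is the delicate step is shared by the paper's own proof.
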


Remark that if  $A(t)$ is bounded,  then property \eqref{bou-u} holds.
Let $S(t)\in GL(n,\mathbb{R})$ be symmetric invertible satisfying the form
\begin{align*}
S(t)=&~ \int_t^{+\infty} \big(\Psi(\kappa,t)\pi(t)\big)^{*} \Psi(\kappa,t)\pi(t)
                \left(\frac{\mu(\kappa)}{\mu(t)}\right)^{-2(\lambda_s+\eta)} \frac{\mu'(\kappa)}{\mu(\kappa)} \, d\kappa
\notag \\
&~ - \int_{-\infty}^t  \big(\Psi(\kappa,t)(id-\pi(t))\big)^{*} \Psi(\kappa,t)(id-\pi(t))
             \left(\frac{\mu(t)}{\mu(\kappa)}\right)^{2(\lambda_u-\eta)} \frac{\mu'(\kappa)}{\mu(\kappa)} \, d\kappa,
\end{align*}
for some constant $\eta>0$ such that $\eta<\min\{-\lambda_s,\lambda_u\}$.
Then we present some properties of the operator $S(t)$, which will provide useful estimates for Theorem \ref{thm-lin}.

\begin{proposition}\label{Pro18}
Suppose that Eq. \eqref{lin-eq} admits a nonuniform $\mu$-dichotomy satisfying \eqref{bou-u}.
Then the following properties hold:
\begin{align}
\|S(t)\| \le \frac{D^2}{\eta} \max\left\{\mu(t)^{\mathrm{sign}(t)\nu},  \mu(t)^{\mathrm{sign}(t)\omega}\right\},
\label{S-a} \\
 S'(t)+A(t)^*S(t)+S(t)A(t)  \le -\frac{2\mu'(t)}{\mu(t)} (id+\lambda_u S(t)),
\label{S-b}
\end{align}
and
\begin{align}
\text{$S(t)\ge \frac{C_s}{4}\mu(|t|)^{-2\tilde{\lambda}}$ \, $\mathrm{on}$ $\mathcal{E}_t^s$, \quad  $-S(t)\ge \frac{C_u}{4}\mu(|t|)^{-2\tilde{\lambda}}$ \, $\mathrm{on}$  $\mathcal{E}_t^u$.}
\label{S-c}
\end{align}
\end{proposition}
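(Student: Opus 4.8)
The plan is to verify the three estimates directly from the integral representation of $S(t)$, using the nonuniform $\mu$-dichotomy bounds \eqref{u-dich} and the local growth bound \eqref{bou-u}. Throughout I will write $P(\kappa,t) = \Psi(\kappa,t)\pi(t)$ and $Q(\kappa,t) = \Psi(\kappa,t)(id-\pi(t))$ for brevity, and I will repeatedly use that $\mu'(\kappa)/\mu(\kappa) = (\log\mu)'(\kappa) > 0$ so that all the integrands are genuinely sign-definite: the first integral defining $S(t)$ contributes a positive semidefinite operator on $\mathcal{X}_t^s$ and the second a negative semidefinite one on $\mathcal{X}_t^u$.

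First, for \eqref{S-a}, I would estimate each integral in operator norm. For the stable part, $\|P(\kappa,t)\|^2 \le D^2 (\mu(\kappa)/\mu(t))^{2\lambda_s}\mu(t)^{2\,\mathrm{sign}(t)\nu}$ for $\kappa \ge t$, so the integrand is bounded by $D^2\mu(t)^{2\,\mathrm{sign}(t)\nu}(\mu(\kappa)/\mu(t))^{2\lambda_s - 2(\lambda_s+\eta)}\mu'(\kappa)/\mu(\kappa) = D^2\mu(t)^{2\,\mathrm{sign}(t)\nu}(\mu(\kappa)/\mu(t))^{-2\eta}\mu'(\kappa)/\mu(\kappa)$; the substitution $r = \mu(\kappa)/\mu(t)$ turns $\int_t^{+\infty}(\cdot)\,d\kappa$ into $\int_1^{+\infty} r^{-2\eta-1}\,dr = 1/(2\eta)$, giving a bound $\tfrac{D^2}{2\eta}\mu(t)^{2\,\mathrm{sign}(t)\nu}$ on that piece. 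Symmetrically, the unstable integral contributes $\tfrac{D^2}{2\eta}\mu(t)^{2\,\mathrm{sign}(t)\omega}$. Adding the two halves and taking the max over the two exponents yields \eqref{S-a}. (A minor point to be careful about: the claimed bound has no factor $2$ in the denominator, so one should track constants and either absorb the $1/2$ or note the $\eta$ in the statement can be taken as the one appearing here; in any case $D^2/\eta$ dominates.)

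Second, for \eqref{S-b}, the key is that $\frac{d}{d\kappa}\big[P(\kappa,t)^*P(\kappa,t)(\mu(\kappa)/\mu(t))^{-2(\lambda_s+\eta)}\mu'(\kappa)/\mu(\kappa)\big]$ — after using $\frac{d}{dt}\Psi(\kappa,t) = -\Psi(\kappa,t)A(t)$ in the differentiation of $S(t)$ with respect to $t$, the commutation relation $\pi(t)\Psi(t,s)=\Psi(t,s)\pi(s)$, and the fundamental theorem of calculus on the $\kappa$-integral — produces, after rearrangement, the operator $S'(t) + A(t)^*S(t) + S(t)A(t)$ expressed as a boundary term plus a correction proportional to $\mu'(t)/\mu(t)$ times $S(t)$ and the identity. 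Concretely I expect the boundary terms at $\kappa = t$ (from both integrals) to assemble into $-\frac{2\mu'(t)}{\mu(t)}\pi(t)^*\pi(t) - \frac{2\mu'(t)}{\mu(t)}(id-\pi(t))^*(id-\pi(t))$, which, if one arranges for the projections to be orthogonal (as one may after the straightening transformations of Section 3), equals $-\frac{2\mu'(t)}{\mu(t)}\,id$; and the interior derivative of the exponent $(\mu(\kappa)/\mu(t))^{\mp 2(\lambda_{s/u}+\eta)}$ with respect to $t$ brings down factors $\pm 2(\lambda_s+\eta)\mu'(t)/\mu(t)$ and $\mp 2(\lambda_u-\eta)\mu'(t)/\mu(t)$ times the respective integrals. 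Collecting these and using $\lambda_s + \eta < 0 < \lambda_u - \eta$ together with the sign-definiteness of the two integral pieces gives an upper bound of the form $-\frac{2\mu'(t)}{\mu(t)}(id + \lambda_u S(t))$ — one discards the favorably-signed $(\lambda_s+\eta)$-term on the stable cone and keeps only the $\lambda_u$-term coming from the unstable integral after noting $-\int(\text{unstable}) = S(t)$ restricted appropriately. This is the computational heart of the proposition.

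Third, for \eqref{S-c}: on $\mathcal{E}_t^s$ the first integral defining $S(t)$ already gives $\langle S(t)x,x\rangle = \int_t^{+\infty}\|\Psi(\kappa,t)x\|^2(\mu(\kappa)/\mu(t))^{-2(\lambda_s+\eta)}\mu'(\kappa)/\mu(\kappa)\,d\kappa$ (the unstable integral vanishes since $x \in \mathcal{X}_t^s$ and the projections are complementary after straightening). Restricting the integral to $\kappa \in [t, t+c]$ and using the reverse local bound from \eqref{bou-u} — namely $\|x\| = \|\Psi(t,\kappa)\Psi(\kappa,t)x\| \le \tilde D\mu(|t|)^{\tilde\lambda}\|\Psi(\kappa,t)x\|$, hence $\|\Psi(\kappa,t)x\|^2 \ge \tilde D^{-2}\mu(|t|)^{-2\tilde\lambda}\|x\|^2$ for $\kappa$ near $t$ — together with a uniform positive lower bound on $(\mu(\kappa)/\mu(t))^{-2(\lambda_s+\eta)}$ and on $\int_t^{t+c}\mu'(\kappa)/\mu(\kappa)\,d\kappa$ over the compact window, yields $\langle S(t)x,x\rangle \ge \frac{C_s}{4}\mu(|t|)^{-2\tilde\lambda}\|x\|^2$ with $C_s$ an explicit constant built from $\tilde D$, $c$, $\eta$, $\lambda_s$. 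The bound for $-S(t)$ on $\mathcal{E}_t^u$ is entirely symmetric using the second integral.

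The main obstacle I anticipate is \eqref{S-b}: carrying out the differentiation of the parameter-dependent improper integral rigorously (justifying differentiation under the integral sign, handling the boundary contribution at $\kappa = t$, and checking that the leftover integral terms have the right sign) is delicate, and one must also be precise about whether $\pi(t)$ is being treated as an orthogonal projection — if not, extra cross terms $\pi(t)^*(id-\pi(t))$ appear and the clean identity $-\frac{2\mu'}{\mu}\,id$ degrades to an inequality that still needs the correct sign. Everything else reduces to the change of variables $r = \mu(\kappa)/\mu(t)$ and bookkeeping with the dichotomy constants.
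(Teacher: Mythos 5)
Your approach to \eqref{S-a} and \eqref{S-b} follows the paper essentially step for step: for \eqref{S-a} you observe (as the paper does, using the symmetry of $S(t)$) that $\|S(t)\|=\sup_{x\neq 0}|U(t,x)|/\|x\|^2$ and bound each of the two integrals via the dichotomy estimate and the change of variables $r=\mu(\kappa)/\mu(t)$, recovering the constant $D^2/\eta$ after taking the max of the sum; for \eqref{S-b} you differentiate the same rewritten form of $S(t)$ (using $\pi(\kappa)\Psi(\kappa,t)=\Psi(\kappa,t)\pi(t)$ and $\partial_t\Psi(\kappa,t)=-\Psi(\kappa,t)A(t)$), separate the Leibniz boundary term, the $A$-terms, and the terms produced by differentiating the weight $(\mu(\kappa)/\mu(t))^{\mp 2(\lambda_{s/u}\pm\eta)}$, and discard the favorably signed $(\lambda_s+\eta)$-piece. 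This is exactly what the paper does.

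For \eqref{S-c}, you take a genuinely different route. The paper does not re-prove the bound in Proposition \ref{Pro18}; it simply cites \eqref{VVV-4} and \eqref{VVV-8}, which were established in the proof of Proposition \ref{V-thm2} via a telescoping argument: $U(\tau,x)\ge U(\tau,x)-U(\tau\mp 1,\Psi(\tau\mp 1,\tau)x)$, decomposed into the two sign-definite integrals and then restricted to windows of length $c$ where \eqref{bou-u} applies. You instead restrict the single integral $\langle S(t)x,x\rangle$ to $[t,t+c]$ directly, invoking the reverse estimate $\|\Psi(\kappa,t)x\|\ge\tilde D^{-1}\mu(|t|)^{-\tilde\lambda}\|x\|$. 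Your route is shorter, but it silently assumes $\mathcal{E}_t^s=\mathcal{X}_t^s$ so that the second integral vanishes identically; the paper's telescoping argument makes no such identification and works for general $x\in\mathcal{E}_t^s=\{x: U(\kappa,\Psi(\kappa,t)x)\ge 0\;\forall\kappa\}$ whose components $x_s,x_u$ might a priori both be nonzero. The identification does in fact hold with this explicit $U$, but it is a step that would need to be stated.

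Your instinct to worry about $\pi(t)^*\pi(t)$ in the boundary term of \eqref{S-b} is well placed — the Leibniz boundary contributions are $-\pi(t)^*\pi(t)\,\mu'(t)/\mu(t)$ and $-(id-\pi(t))^*(id-\pi(t))\,\mu'(t)/\mu(t)$, and the paper's write-up records these simply as $-\mu'(t)/\mu(t)$ each. However, your proposed fix, namely ``arrange for the projections to be orthogonal after the straightening transformations of Section 3,'' is not available: $\Xi_s$ and $\Xi_u$ straighten the nonlinear invariant manifolds $\mathcal{G}_s,\mathcal{G}_u$ of equation \eqref{nonlin-eq}, and do nothing to make the linear projections $\pi(t)$ orthogonal. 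If $\pi(t)$ is not orthogonal, $\pi(t)^*\pi(t)+(id-\pi(t))^*(id-\pi(t))$ is not a multiple of $id$, and (even if it were) one gets $-\mu'(t)/\mu(t)\,id$ rather than $-2\mu'(t)/\mu(t)\,id$ from the boundary. So you have correctly identified a loose end, but you have not closed it; to finish \eqref{S-b} one must either justify the replacement of these boundary operators by $-\mu'(t)/\mu(t)\,id$ (e.g.\ by a lower bound on $\pi^*\pi+(id-\pi)^*(id-\pi)$ absorbed into the constants) or carry the projection operators through the remainder of the inequality.
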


\begin{remark}\label{Rem-5}{\rm
Let $x(\kappa)=\Psi(\kappa,\tau)x(\tau)$ for all $\kappa,\tau\in\mathbb{R}$. It follows from \eqref{S-b} that
\begin{align*}
 \frac{d}{d\kappa} U(\kappa,x(\kappa))=& \langle S'(\kappa)x(\kappa),x(\kappa) \rangle
         + \langle S(\kappa)x'(\kappa),x(\kappa) \rangle + \langle S(\kappa)x(\kappa), x'(\kappa) \rangle
\\
=&  \langle (S'(\kappa)+S(\kappa)A(\kappa)+A(\kappa)^* S(\kappa)) x(\kappa),x(\kappa) \rangle
\\
\le&  -\frac{2\mu'(\kappa)}{\mu(\kappa)}\|x(\kappa)\|^2 -\frac{2\lambda_u \mu'(\kappa)}{\mu(\kappa)}U(\kappa,x(\kappa)).
\end{align*}
}
\end{remark}

Based on estimate \eqref{u-grow} and the definition of strict Lyapunov function, we claim the   result.

\begin{proposition}\label{thm-str}
Let  $\mathcal{G}_\tau^s \subset \mathcal{E}_\tau^s$ and $\mathcal{G}_\tau^u \subset \mathcal{E}_\tau^u$ be  two subspaces satisfying
$$
\text{$\Psi(t,\tau) \mathcal{G}_\tau^s=\mathcal{G}_t^s$ \, and \, $\Psi(t,\tau) \mathcal{G}_\tau^u=\mathcal{G}_t^u$, $\forall t,\tau\in\mathbb{R}$.}
$$
Replace \(\mathcal{E}_\tau^s\) and \(\mathcal{E}_\tau^u\) in Definition \ref{str-V} with \(\mathcal{G}_\tau^s\) and \(\mathcal{G}_\tau^u\), then for any $t\ge \tau$, we have
\begin{equation}\label{u-lem}
\begin{cases}
\|\Psi(t,\tau)|\mathcal{G}_\tau^s\| \le
   C^2 \left( \frac{\mu(t)}{\mu(\tau)}\right)^{\beta+\mathrm{sign}(t)\epsilon}\mu(\tau)^{2\mathrm{sign}(\tau)\epsilon},
\\
\|\Psi(t,\tau)^{-1}|\mathcal{G}_t^u\| \le C^2 \left(\frac{\mu(\tau)}{\mu(t)}\right)^{-\alpha+\mathrm{sign}(\tau)\epsilon}
  \mu(t)^{ 2\mathrm{sign}(t) \epsilon}.
\end{cases}
\end{equation}
\end{proposition}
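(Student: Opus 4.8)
The plan is to derive both inequalities in \eqref{u-lem} from a single short chain of estimates, combining the three items of Definition \ref{str-V} (read with the subspaces $\mathcal{G}_\tau^{s},\mathcal{G}_\tau^{u}$ in place of $\mathcal{E}_\tau^{s},\mathcal{E}_\tau^{u}$) with the upper growth bound \eqref{u-grow}, and using only the invariances $\Psi(t,\tau)\mathcal{G}_\tau^{s}=\mathcal{G}_t^{s}$ and $\Psi(t,\tau)\mathcal{G}_\tau^{u}=\mathcal{G}_t^{u}$, the identity $\Psi(t,\tau)^{-1}=\Psi(\tau,t)$, and the monotonicity of $\mu$.

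For the contracting bound I would fix $t\ge\tau$ and $x\in\mathcal{G}_\tau^{s}$ and set $y:=\Psi(t,\tau)x$, which lies in $\mathcal{G}_t^{s}\subset\mathcal{E}_t^{s}$. Then item (3) of Definition \ref{str-V} at time $t$ bounds $\|y\|$ by $C\mu(t)^{\mathrm{sign}(t)\epsilon}|V(t,y)|$, item (2) bounds $|V(t,y)|$ by $(\mu(t)/\mu(\tau))^{\beta}|V(\tau,x)|$, and \eqref{u-grow} at time $\tau$ bounds $|V(\tau,x)|$ by $C\mu(\tau)^{\mathrm{sign}(\tau)\epsilon}\|x\|$, so that
\[
\|\Psi(t,\tau)x\|\le C^{2}\,\mu(t)^{\mathrm{sign}(t)\epsilon}\,\mu(\tau)^{\mathrm{sign}(\tau)\epsilon}\left(\frac{\mu(t)}{\mu(\tau)}\right)^{\beta}\|x\|.
\]
For the expanding bound I would run the same chain backwards: fix $t\ge\tau$ and $y\in\mathcal{G}_t^{u}$, set $x:=\Psi(\tau,t)y\in\mathcal{G}_\tau^{u}\subset\mathcal{E}_\tau^{u}$ so that $V(\tau,x)\ge0$, use item (3) at time $\tau$ to get $\|x\|\le C\mu(\tau)^{\mathrm{sign}(\tau)\epsilon}V(\tau,x)$, invert item (1) (both of whose sides are nonnegative here) to get $V(\tau,x)\le(\mu(\tau)/\mu(t))^{-\alpha}V(t,y)$, and use \eqref{u-grow} at time $t$ to get $V(t,y)\le C\mu(t)^{\mathrm{sign}(t)\epsilon}\|y\|$, arriving at
\[
\|\Psi(t,\tau)^{-1}y\|\le C^{2}\,\mu(\tau)^{\mathrm{sign}(\tau)\epsilon}\,\mu(t)^{\mathrm{sign}(t)\epsilon}\left(\frac{\mu(\tau)}{\mu(t)}\right)^{-\alpha}\|y\|.
\]

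What then remains is to reconcile these preliminary bounds with the exact exponents in \eqref{u-lem}: for the first I must verify $\mu(t)^{\mathrm{sign}(t)\epsilon}\mu(\tau)^{\mathrm{sign}(\tau)\epsilon}\le(\mu(t)/\mu(\tau))^{\mathrm{sign}(t)\epsilon}\mu(\tau)^{2\mathrm{sign}(\tau)\epsilon}$, which collapses to $\mu(\tau)^{(\mathrm{sign}(\tau)-\mathrm{sign}(t))\epsilon}\ge1$, and for the second the structurally analogous inequality $\mu(t)^{(\mathrm{sign}(t)-\mathrm{sign}(\tau))\epsilon}\ge1$. I would settle both by a case split on the sign of the base: $\mathrm{sign}$ is nondecreasing, so for $t\ge\tau$ the relevant exponent has a fixed sign, and since $\mu(s)<1$ for $s<0$, $\mu(0)=1$ and $\mu(s)>1$ for $s>0$, the leftover power of $\mu$ always points the favourable way---in the single configuration where it would not, the two signs coincide and the exponent vanishes. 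I expect this arithmetic sign-and-growth-rate bookkeeping to be the only point needing genuine attention; there is no analytic difficulty beyond the three-step chain above.
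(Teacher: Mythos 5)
Your proof is correct and follows essentially the same three-step chain the paper uses: Definition \ref{str-V}-(3) at the far endpoint, then item (2) (respectively item (1) inverted) for the decay, then \eqref{u-grow} at the starting endpoint, followed by the observation that $\mu(\tau)^{(\mathrm{sign}(\tau)-\mathrm{sign}(t))\epsilon}\ge 1$ and $\mu(t)^{(\mathrm{sign}(t)-\mathrm{sign}(\tau))\epsilon}\ge 1$ for $t\ge\tau$. The only cosmetic difference is that for the expanding bound the paper first proves a lower bound on $\|\Psi(t,\tau)x\|$ and then inverts, whereas you bound $\|\Psi(t,\tau)^{-1}y\|$ directly, which amounts to the same computation.
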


Next, in the case of strict quadratic Lyapunov functions, it is proved that Eq. \eqref{lin-eq} implies nonuniform $\mu$-dichotomy.

\begin{proposition}\label{thm-ud}
Suppose that $S(t)\in GL(n,\mathbb{R})$ is symmetric invertible satisfying \eqref{S-a} and \eqref{S-c}.
If Eq. \eqref{lin-eq} has a strict quadratic Lyapunov function, then it admits a nonuniform $\mu$-dichotomy.
\end{proposition}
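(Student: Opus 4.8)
The plan is to build the stable/unstable splitting directly out of the quadratic form $U(t,x)=\langle S(t)x,x\rangle$ and then to read off the dichotomy bounds from the strict estimates of Definition \ref{str-V} together with \eqref{S-a} and \eqref{S-c}. Since $\mathcal{C}^u(V_\tau)=\{0\}\cup\{x:U(\tau,x)<0\}$ and $\mathcal{C}^s(V_\tau)=\{0\}\cup\{x:U(\tau,x)>0\}$, Definition \ref{Lp-func} tells us that the symmetric invertible matrix $S(\tau)$ has signature $(d_s,d_u)$ for every $\tau$. I would take the stable and unstable subspaces to be $\mathcal{X}_\tau^s:=\mathcal{E}_\tau^s$ and $\mathcal{X}_\tau^u:=\mathcal{E}_\tau^u$, let $\pi(\tau)$ be the projection onto $\mathcal{E}_\tau^s$ along $\mathcal{E}_\tau^u$, and note that the intertwining relation $\pi(t)\Psi(t,s)=\Psi(t,s)\pi(s)$ is then immediate from \eqref{E-su}. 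The heart of the matter is to show that $\mathcal{E}_\tau^s$ and $\mathcal{E}_\tau^u$ are in fact \emph{linear} subspaces of dimensions $d_s$ and $d_u$ with $\mathcal{E}_\tau^s\oplus\mathcal{E}_\tau^u=\mathbb{R}^n$. Emptiness of the intersection is easy: if $x\in\mathcal{E}_\tau^s\cap\mathcal{E}_\tau^u$ then $U(\tau,x)=0$, hence $V(\tau,x)=0$, and since $x\in\mathcal{E}_\tau^s\cup\mathcal{E}_\tau^u$ property (3) of Definition \ref{str-V} forces $\|x\|\le C\mu(\tau)^{\mathrm{sign}(\tau)\epsilon}|V(\tau,x)|=0$. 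For linearity and the dimension count I would follow the reasoning used for nonuniform exponential dichotomies by Barreira and Valls \cite{BV-09JDE} and by Barreira, Dragi\v{c}evi\'{c} and Valls \cite{BDV-JDE}: the Lyapunov monotonicity $V(t,\Psi(t,\tau)x)\ge V(\tau,x)$ means that the flow carries the cone $\{U<0\}$ forward into itself and the cone $\{U>0\}$ backward into itself, and coupling this with the strict decay/expansion rates $\alpha,\beta$ in parts (1)--(2) identifies $\mathcal{E}_\tau^u$ (respectively $\mathcal{E}_\tau^s$) with the set of initial data whose backward (respectively forward) orbit tends to the origin; these turn out to be linear subspaces of the asserted dimensions, and they are complementary because their intersection is $\{0\}$ and the dimensions add up to $n$. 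I expect this step---promoting the level sets of an indefinite quadratic form to a genuine invariant splitting---to be the main obstacle, and it is precisely here that strictness of $V$, not merely its monotonicity, is indispensable.

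Next I would control the projections by means of \eqref{S-a} and \eqref{S-c}. On $\mathcal{E}_\tau^s$ one has $U(\tau,\cdot)\ge 0$ with $S(\tau)\ge\frac{C_s}{4}\mu(|\tau|)^{-2\tilde{\lambda}}$, on $\mathcal{E}_\tau^u$ one has $U(\tau,\cdot)\le 0$ with $-S(\tau)\ge\frac{C_u}{4}\mu(|\tau|)^{-2\tilde{\lambda}}$, and $\|S(\tau)\|\le\frac{D^2}{\eta}\max\{\mu(\tau)^{\mathrm{sign}(\tau)\nu},\mu(\tau)^{\mathrm{sign}(\tau)\omega}\}$. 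Writing $w=y+z$ with $y\in\mathcal{E}_\tau^s$ and $z\in\mathcal{E}_\tau^u$, and using the identity $\langle S(\tau)(y-z),w\rangle=\langle S(\tau)y,y\rangle-\langle S(\tau)z,z\rangle$ together with the two lower bounds and the upper bound on $\|S(\tau)\|$, a short quadratic-form (angle) computation gives $\|\pi(\tau)\|,\,\|id-\pi(\tau)\|\le M\,\mu(|\tau|)^{2\tilde{\lambda}}\max\{\mu(\tau)^{\mathrm{sign}(\tau)\nu},\mu(\tau)^{\mathrm{sign}(\tau)\omega}\}$ for a constant $M$ depending only on $C_s,C_u,D,\eta$.

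Finally I would feed the splitting into Proposition \ref{thm-str} with $\mathcal{G}_\tau^s=\mathcal{E}_\tau^s$ and $\mathcal{G}_\tau^u=\mathcal{E}_\tau^u$---legitimate since \eqref{E-su} supplies the required invariance---which yields, for $t\ge\tau$, the bounds $\|\Psi(t,\tau)|\mathcal{E}_\tau^s\|\le C^2(\mu(t)/\mu(\tau))^{\beta+\mathrm{sign}(t)\epsilon}\mu(\tau)^{2\mathrm{sign}(\tau)\epsilon}$ and $\|\Psi(\tau,t)|\mathcal{E}_t^u\|\le C^2(\mu(\tau)/\mu(t))^{-\alpha+\mathrm{sign}(\tau)\epsilon}\mu(t)^{2\mathrm{sign}(t)\epsilon}$. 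Factoring $\Psi(t,s)\pi(s)=(\Psi(t,s)|\mathcal{E}_s^s)\pi(s)$ for $t\ge s$ and $\Psi(t,s)(id-\pi(s))=(\Psi(t,s)|\mathcal{E}_s^u)(id-\pi(s))$ for $t\le s$, multiplying in the projection bound from the previous step, and using the elementary comparisons $\mu(|t|)\le\frac{\mu(t)}{\mu(s)}\mu(|s|)$ and $\mu(t)^{\mathrm{sign}(t)\epsilon}\le\big(\frac{\mu(t)}{\mu(s)}\big)^{\epsilon}\mu(s)^{\mathrm{sign}(s)\epsilon}$ valid for $t\ge s$ (and their mirror forms for $t\le s$), all of the $t$-dependence can be transferred onto the ratio $\mu(t)/\mu(s)$ and onto the initial instant $s$. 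This produces \eqref{u-dich} with contraction and expansion rates $\lambda_s=\beta+\epsilon$ and $\lambda_u=-\alpha-\epsilon$ and with a nonuniformity exponent obtained by collecting the time-$s$ factors $\mu(|s|)^{2\tilde{\lambda}}$, $\mu(s)^{2\mathrm{sign}(s)\epsilon}$ and $\max\{\mu(s)^{\mathrm{sign}(s)\nu},\mu(s)^{\mathrm{sign}(s)\omega}\}$ into a single term. It then remains only to observe that, with the constants carried along, one has $\lambda_s<0$, $\lambda_u>0$ and the compatibility inequalities $\lambda_s+\nu<0$, $\lambda_u-\omega>0$---this last point being built into the constants of a strict quadratic Lyapunov function such as the one furnished by Proposition \ref{V-thm2}, whose strict rates dominate its nonuniformity---so that Eq. \eqref{lin-eq} admits a nonuniform $\mu$-dichotomy.
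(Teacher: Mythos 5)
Your overall plan matches the paper's: build an invariant splitting from the quadratic Lyapunov function, bound the projections using \eqref{S-a}--\eqref{S-c}, and feed the splitting into Proposition \ref{thm-str}. But there is a genuine gap in your first step. You take $\mathcal{X}_\tau^s:=\mathcal{E}_\tau^s$ and $\mathcal{X}_\tau^u:=\mathcal{E}_\tau^u$ and declare that these are linear subspaces of dimensions $d_s$ and $d_u$ with $\mathcal{E}_\tau^s\oplus\mathcal{E}_\tau^u=\mathbb{R}^n$. You yourself flag this as ``the main obstacle'' and then wave at cone-invariance arguments from \cite{BV-09JDE,BDV-JDE} without actually carrying them out. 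The objects $\mathcal{E}_\tau^s=\bigcap_t\Psi(\tau,t)\{U(t,\cdot)\ge 0\}$ and $\mathcal{E}_\tau^u=\bigcap_t\Psi(\tau,t)\{U(t,\cdot)\le 0\}$ are intersections of quadratic cones, and neither the definitions nor the hypotheses \eqref{S-a}, \eqref{S-c} make it automatic that these intersections are linear or of the advertised dimension; a nontrivial cone-contraction lemma would be needed. That lemma is not in the paper, and the paper does not use it: instead it invokes Definition \ref{Lp-func}-(1) to pick \emph{some} complementary linear subspaces $\mathcal{G}_\tau^s\subset\mathcal{E}_\tau^s$, $\mathcal{G}_\tau^u\subset\mathcal{E}_\tau^u$ at a single time, then propagates them by $\mathcal{G}_t^s=\Psi(t,\tau)\mathcal{G}_\tau^s$, $\mathcal{G}_t^u=\Psi(t,\tau)\mathcal{G}_\tau^u$ (which stay inside $\mathcal{E}_t^s,\mathcal{E}_t^u$ by \eqref{E-su}), and defines $\pi(t)$ as the projection along this pair. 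This delivers exactly what Proposition \ref{thm-str} needs while entirely bypassing the question of whether $\mathcal{E}_\tau^s,\mathcal{E}_\tau^u$ themselves are subspaces. You should adopt that route rather than trying to promote $\mathcal{E}_\tau^s,\mathcal{E}_\tau^u$ to subspaces.

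Two smaller remarks. First, your disjointness argument (that $x\in\mathcal{E}_\tau^s\cap\mathcal{E}_\tau^u$ forces $V(\tau,x)=0$ and then $x=0$ by Definition \ref{str-V}-(3)) is fine and is the kind of thing the paper leaves implicit. Second, your projection estimate $\|\pi(\tau)\|\le M\,\mu(|\tau|)^{2\tilde{\lambda}}\max\{\mu(\tau)^{\mathrm{sign}(\tau)\nu},\mu(\tau)^{\mathrm{sign}(\tau)\omega}\}$ is actually \emph{more} honest than the paper's stated Assertion $\|\pi(t)\|\le\sqrt{2}B\mu(|t|)^{2\tilde{\lambda}}$: the paper's completing-the-square argument yields $\|\pi(t)x\|\lesssim\nu(t)^{-1}\|S(t)x\|$, so the extra factor $\|S(t)\|$ controlled by \eqref{S-a} does enter, exactly as you say. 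Carrying that factor through (and then absorbing the extra $s$-dependent terms into the nonuniformity exponent, as you indicate in your final paragraph) is the right bookkeeping.
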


The relationship between nonuniform exponential dichotomy and Lyapunov functions has   been established in \cite{BDV-JDE,BV-09JDE}.
In order to allow readers to quickly see the proof of our main results, we  put the proof of  Propositions \ref{V1-thm}--\ref{thm-ud} in Appendix \ref{App-4}.

\section{The splitting lemma for nonuniform $\mu$-dichotomies}

In this section, we introduce a nonlinear    splitting lemma for  nonuniform $\mu$-dichotomies,
 which is used to decouple Eq. \eqref{nonlin-eq} into a contraction system and an expansion system by straightening up the invariant
foliations, i.e.,
the following result holds.

\begin{lemma}\label{lem-spli}
Assume that Eq. \eqref{nonlin-eq} admits a nonuniform $\mu$-dichotomy and that $f(t,x)$ satisfies the set $\mathcal{A}_f$ with a small constant $\delta_f>0$.
Then if
$\theta\ge   \max\{\nu,\omega\}$, $\lambda_s<\nu-\theta$ and $\lambda_u>\theta-\omega$, then
 there is a homeomorphism $\mathfrak{S}(t,\cdot):\mathbb{R}^n \to \mathbb{R}^n$
such that Eq. \eqref{nonlin-eq} is conjugated to the system
\begin{equation}\label{dec-eqs}
\begin{cases}
 x_s'(t)=A_s(t)x_s(t)+\pi(t)f(t,x_s(t)),
\\
 x_u'(t)=A_u(t)x_u(t)+(id-\pi(t))f(t,x_u(t)).
\end{cases}
\end{equation}
\end{lemma}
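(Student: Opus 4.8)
The plan is to build $\mathfrak{S}$ as a composition of two conjugacies. The first straightens the stable and unstable manifolds of Lemma~\ref{thm-stab}, so that $\mathcal{X}_t^s$ and $\mathcal{X}_t^u$ become genuine invariant subspaces of Eq.~\eqref{nonlin-eq}; the second flattens the invariant foliations of Lemma~\ref{thm-foli} onto those subspaces, which removes the remaining coupling and yields exactly the block form \eqref{dec-eqs}.

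\emph{Step 1 (straightening the manifolds).} Since $\delta_f$ is small, the graph maps $g_s,g_u$ of Lemma~\ref{thm-stab} have small Lipschitz constants, so the fibre-preserving maps $\Xi_s(t,x_s,x_u)=(t,x_s,x_u-g_s(t,x_s))$ and $\Xi_u(t,x_s,x_u)=(t,x_s-g_u(t,x_u),x_u)$ are homeomorphisms of $\mathbb{R}\times\mathbb{R}^n$, each conjugating the flow of \eqref{nonlin-eq} to that of an equation of the same type (same $A(t)$, a new nonlinearity still in $\mathcal{A}_f$ after a harmless enlargement of $\delta_f$). I would first check that $\Xi_s$ keeps $\mathcal{G}_u$ a small-Lipschitz graph over $\mathcal{X}^u$, so that $\Xi_u$ may be applied afterwards; after $\Xi_u\circ\Xi_s$ one may assume $\pi(t)f(t,(id-\pi(t))x)=0$ and $(id-\pi(t))f(t,\pi(t)x)=0$, i.e. $\mathcal{X}_t^s,\mathcal{X}_t^u$ are invariant and \eqref{nonlin-eq} restricted to them is precisely the two lines of \eqref{dec-eqs}. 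Only the coupling between $x_s$ and $x_u$ through the off-subspace values of $f$ is left.

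\emph{Step 2 (flattening the foliations).} The conjugacy $\Xi_u\circ\Xi_s$ transports the invariant foliations of Lemma~\ref{thm-foli} to invariant foliations of the reduced equation, whose leaves $\mathcal{W}_s(\tau,x)$ (resp. $\mathcal{W}_u(\tau,x)$) are Lipschitz graphs over $\mathcal{X}_\tau^s$ (resp. $\mathcal{X}_\tau^u$) with small Lipschitz constants, with $x\in\mathcal{W}_s(\tau,x)\cap\mathcal{W}_u(\tau,x)$ and $\mathcal{W}_s(\tau,0)=\mathcal{X}_\tau^s$, $\mathcal{W}_u(\tau,0)=\mathcal{X}_\tau^u$ (the straightened manifolds being the leaves through $0$). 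Transversality of the two foliations, which holds because the product of the two leaf-Lipschitz constants is $<1$, gives for every $x$ a unique point $P_u(\tau,x)\in\mathcal{W}_s(\tau,x)\cap\mathcal{X}_\tau^u$ and a unique $P_s(\tau,x)\in\mathcal{W}_u(\tau,x)\cap\mathcal{X}_\tau^s$, each continuous in $x$ by the regularity of $h_s,h_u$. Set
\[
\mathfrak{S}(\tau,x)=P_s(\tau,x)+P_u(\tau,x).
\]
To see this is a homeomorphism, given $y=y_s+y_u$ with $y_s\in\mathcal{X}_\tau^s$, $y_u\in\mathcal{X}_\tau^u$ I would take $x:=\mathcal{W}_s(\tau,y_u)\cap\mathcal{W}_u(\tau,y_s)$, again a single point, realized as the fixed point of the contraction $\eta\mapsto h_s(\tau,h_u(\tau,\eta,y_s),y_u)$ and depending continuously on $(y_s,y_u)$; then $\mathcal{W}_s(\tau,x)=\mathcal{W}_s(\tau,y_u)$ and $\mathcal{W}_u(\tau,x)=\mathcal{W}_u(\tau,y_s)$ force $P_u(\tau,x)=y_u$, $P_s(\tau,x)=y_s$, so $y\mapsto x$ is the continuous inverse of $\mathfrak{S}(\tau,\cdot)$.

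\emph{Step 3 (conjugacy of the flows, and the main obstacle).} Denoting by $\Psi_f(t,\tau)$ the solution operator of the reduced \eqref{nonlin-eq}, invariance of the foliations and subspaces gives $\Psi_f(t,\tau)\mathcal{W}_s(\tau,x)=\mathcal{W}_s(t,\Psi_f(t,\tau)x)$ and $\Psi_f(t,\tau)\mathcal{X}_\tau^u=\mathcal{X}_t^u$ (and symmetrically on the unstable side); intersecting these identities yields $P_u(t,\Psi_f(t,\tau)x)=\Psi_f(t,\tau)P_u(\tau,x)$, and since $P_u(\tau,x)\in\mathcal{X}^u$ its evolution is the restriction of \eqref{nonlin-eq} to $\mathcal{X}^u$, i.e. the second line of \eqref{dec-eqs}; similarly $P_s$ obeys the first line. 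Hence $\mathfrak{S}(t,\cdot)$ conjugates the reduced flow to \eqref{dec-eqs}, and composing with $\Xi_u\circ\Xi_s$ gives the homeomorphism claimed for \eqref{nonlin-eq}. I expect the main difficulty to be the uniform Lipschitz bookkeeping: one must verify that straightening the stable manifold does not destroy the graph structure of the unstable manifold or of the two foliations, and that the Lipschitz constants of $g_s,g_u,h_s,h_u$ --- controlled via the nonuniform $\mu$-dichotomy estimates under $\theta\ge\max\{\nu,\omega\}$, $\lambda_s<\nu-\theta$, $\lambda_u>\theta-\omega$ --- stay uniformly (indeed arbitrarily small, on shrinking $\delta_f$) below $1$, so that every intersection above is transverse and determines a single point continuously in the data.
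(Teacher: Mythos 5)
Your proof is correct and essentially matches the paper's: the map $\mathfrak{S}(\tau,x)=P_s(\tau,x)+P_u(\tau,x)$ built from leaf/subspace intersections is exactly the paper's $\mathfrak{S}(t,x)=h_s(t,0,x)+h_u(t,0,x)$ (since $P_u(t,x)=\mathcal{W}_s(t,x)\cap\mathcal{X}_t^u=h_s(t,0,x)$ and $P_s(t,x)=\mathcal{W}_u(t,x)\cap\mathcal{X}_t^s=h_u(t,0,x)$), and your inverse fixed-point iteration $\eta\mapsto h_s(\tau,h_u(\tau,\eta,y_s),y_u)$ is the paper's $\hat{\mathfrak{S}}$ written in its unstable component. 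The only cosmetic difference is that the paper verifies the conjugacy identity by a direct computation (via its properties (IP.1)--(IP.2) and Lemma~\ref{split-lem}), whereas you invoke invariance of the leaves and of the straightened subspaces under the nonlinear flow and intersect --- which is the geometric content of the same calculation.
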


\begin{proof}[Proof of  Lemma \ref{lem-spli}.]
For any $t\in\mathbb{R}$, define $\mathfrak{S}(t,\cdot):\mathbb{R}^n\to \mathbb{R}^n$ and
$\hat{\mathfrak{S}}(t,\cdot):\mathbb{R}^n\to \mathbb{R}^n$ by
\begin{align}
\mathfrak{S}(t,x)=&~h_s(t,0,x)+h_u(t,0,x),
\label{SS-1} \\
\hat{\mathfrak{S}}(t,x)=&~ h_s(t,\pi(t)\hat{\mathfrak{S}}(t,x),(id-\pi(t))x)+h_u(t,(id-\pi(t))\hat{\mathfrak{S}}(t,x),\pi(t)x).
\label{SS-2}
\end{align}
Form  Lemma \ref{thm-foli}, we see that $\mathfrak{S}(t,x)$ is well-defined (where $\zeta=0$ in $h_s$ and $h_u$).
Define an operator $\mathcal{L}:\mathbb{R}^n \to \mathbb{R}^n$ by
$$
(\mathcal{L}\hat{\mathfrak{S}})(t,x)=h_s(t,\pi(t)\hat{\mathfrak{S}}(t,x),(id-\pi(t))x)+h_u(t,(id-\pi(t))\hat{\mathfrak{S}}(t,x),\pi(t)x),
\quad \forall t\in\mathbb{R}.
$$
Since $h_s$ and $h_u$ are Lipschitz continuous in $\zeta$ with small Lipschitz constants
respectively, one can directly obtain that $\mathcal{L}$ is a contraction mapping.
Hence, we affirm that there is a unique $\hat{\mathfrak{S}}$ satisfying \eqref{SS-2}.
Furthermore, $\mathfrak{S}(t,x)$ and $\hat{\mathfrak{S}}(t,x)$ are both continuous functions since the continuity of the stable and unstable foliations.

Next, we show that $\mathfrak{S}(t,\hat{\mathfrak{S}}(t,x))=\hat{\mathfrak{S}}(t,\mathfrak{S}(t,x))=x$.
Indeed, choose a $x\in\mathbb{R}^n$, we have
\begin{align}\label{Sp-1}
h_s(t,\pi(t)x,x)+h_u(t,(id-\pi(t))x,x)=x, \quad \forall t\in\mathbb{R}.
\end{align}
In addition, for arbitrary $y\in\mathbb{R}^n$,
\begin{align}\label{Sp-2}
\text{if  \, $h_s(t,\pi(t)y,x)=(id-\pi(t))y$,  \, then \,  $h_s(t,\cdot,x)=h_s(t,\cdot,y)$,   }
\end{align}
and
\begin{align}\label{Sp-3}
\text{if  \, $h_u(t,(id-\pi(t))y,x)=\pi(t)y$,  \, then \,  $h_u(t,\cdot,x)=h_u(t,\cdot,y)$.   }
\end{align}
In view of \eqref{SS-1}, we have
$$
(id-\pi(t))\mathfrak{S}(t,x)=h_s(t,0,x), \quad \pi(t)\mathfrak{S}(t,x)=h_u(t,0,x).
$$
Then, we obtain from \eqref{Sp-2} and \eqref{Sp-3} that
$$
h_s(t,\cdot,x)=h_s(t,\cdot,(id-\pi(t))\mathfrak{S}(t,x)), \quad h_u(t,\cdot,x)=h_u(t,\cdot,\pi(t)\mathfrak{S}(t,x)).
$$
Thus,
\begin{align}\label{Sp-4}
x=&~ h_s(t,\pi(t)x,x)+h_u(t,(id-\pi(t))x,x)
\notag \\
=&~ h_s(t,\pi(t)x,(id-\pi(t))\mathfrak{S}(t,x))+h_u(t,(id-\pi(t))x,\pi(t)\mathfrak{S}(t,x)).
\end{align}
Moreover, for all $t\in\mathbb{R}$,
\begin{align*}
\hat{\mathfrak{S}}(t,\mathfrak{S}(t,x))=&~ h_s(t,\pi(t)\hat{\mathfrak{S}}(t,\mathfrak{S}(t,x)),(id-\pi(t))\mathfrak{S}(t,x))
\\
&~ + h_u(t,(id-\pi(t))\hat{\mathfrak{S}}(t,\mathfrak{S}(t,x)),\pi(t)\mathfrak{S}(t,x) ),
\end{align*}
which together with \eqref{Sp-1}, \eqref{Sp-4} yields that
$$
\hat{\mathfrak{S}}(t,\mathfrak{S}(t,x))=x, \quad \forall t\in\mathbb{R}.
$$
On the other hand, we have
\begin{align*}
&h_s(t,\pi(t)(id-\pi(t))x,\hat{\mathfrak{S}}(t,x))=h_s(t,0,\hat{\mathfrak{S}}(t,x))=(id-\pi(t))x,
\\
& h_u(t,(id-\pi(t))\pi(t)x,\hat{\mathfrak{S}}(t,x))=h_u(t,0,\hat{\mathfrak{S}}(t,x))=\pi(t)x.
\end{align*}
The above two equality imply that
$$
\mathfrak{S}(t,\hat{\mathfrak{S}}(t,x))=h_s(t,0,\hat{\mathfrak{S}}(t,x))+h_u(t,0,\hat{\mathfrak{S}}(t,x))=x, \quad \forall t\in\mathbb{R}.
$$

  In what follows, we define a nonlinear operator $\mathcal{N}(t):\mathbb{R}\times \mathbb{R}^n \to \mathbb{R}^n$ by
$$
\mathcal{N}(t)(\tau,x):=x(t,\tau,x)=\Psi(t,\tau)x+\int_{\tau}^t \Psi(t,\kappa) f(\kappa,\mathcal{N}(\kappa)(\tau,x)) \, d\kappa.
$$
For any $x,y\in\mathbb{R}^n$, we see that the operator $\mathcal{N}(t)$, $t\in\mathbb{R}$, satisfies the following properties:
\begin{description}
  \item[(IP.1)] letting $id_s$ and $id_u$ be the identity operators on $\mathcal{X}_\tau^s$ and $\mathcal{X}_\tau^u$, respectively,  then
$$
\mathcal{N}(t)(\tau,\cdot)\circ id_s =\pi(t) \circ \mathcal{N}(t)(\tau,\cdot)\circ id_s=: \mathcal{N}^s(t)(\tau,\pi(\tau)\cdot)
$$
and
$$
\mathcal{N}(t)(\tau,\cdot)\circ id_u=(id-\pi(t))\circ \mathcal{N}(t) (\tau,\cdot)\circ id_u =: \mathcal{N}^u(t)(\tau,(id-\pi(\tau))\cdot).
$$
  \item[(IP.2)] for any $y_s=\pi(\tau)y \in \mathcal{X}_\tau^s$ and $y_u=(id-\pi(\tau))y\in \mathcal{X}_\tau^u$,
\begin{align*}
\mathcal{N}(t)(\tau,y_s+h_s(\tau,y_s,x)) =&~ \pi(t)\mathcal{N}(t)(\tau,y_s+h_s(\tau,y_s,x))
\\
=&~ h_s(t,\pi(t)\mathcal{N}(t)(\tau,y_s+h_s(\tau,y_s,x)),\mathcal{N}(t)(\tau,x))
\end{align*}
and
\begin{align*}
\mathcal{N}(t)(\tau,y_u+h_s(\tau,y_u,x)) =&~ (id-\pi(t)) \mathcal{N}(t)(\tau,y_u+h_u(\tau,y_u,x))
\\
=&~ h_u(t,(id-\pi(t))\mathcal{N}(t)(\tau,y_u+h_u(\tau,y_u,x)), \mathcal{N}(t)(\tau,x)).
\end{align*}
\end{description}

Then we have the following lemma.
\begin{lemma}\label{split-lem}
For any $t,\tau\in\mathbb{R}$ and $x\in\mathbb{R}^n$, if  {\bf (IP.1)} and {\bf (IP.2)} hold, then
\begin{align}
\mathfrak{S}(t,\mathcal{N}(t)(\tau,x))
=&~ \mathcal{N}^s(t)(\tau,\pi(\tau)\mathfrak{S}(\tau,x)) + \mathcal{N}^u(t)(\tau,(id-\pi(\tau)) \mathfrak{S}(\tau,x))
\label{Sp-5} \\
\mathcal{N}(t)(\tau,\hat{\mathfrak{S}}(\tau,x))
=&~ \hat{\mathfrak{S}}(t,\cdot)\circ (\mathcal{N}^s(t)(\tau,\pi(\tau)x)+\mathcal{N}^u(t)(\tau,(id-\pi(\tau))x))).
\label{Sp-6}
\end{align}
\end{lemma}

\begin{proof}[Proof of Lemma \ref{split-lem}.]
It follows from \eqref{SS-1}, \eqref{SS-2} and {\bf (IP.1)} and {\bf (IP.2)} that
\begin{align*}
\mathcal{N}^s(t)(\tau,\pi(\tau)\mathfrak{S}(\tau,x)) =&~ \pi(t) \mathcal{N}(t)(\tau,\cdot) \circ id_s \circ \pi(\tau) \circ \mathfrak{S}(\tau,x)
\\
=&~ \pi(t) \mathcal{N}(t)(\tau,\cdot) \circ id_s \circ h_u(\tau,0,x)
\\
=&~ h_u(t,0,\mathcal{N}(t)(\tau,x)),
\end{align*}
and similarly,
$$
\mathcal{N}^u(t)(\tau,(id-\pi(\tau)) \mathfrak{S}(\tau,x))= h_s(t,0,\mathcal{N}(t)(\tau,x)).
$$
Then by \eqref{SS-1}, we obtain
\begin{align*}
\mathfrak{S}(t,\mathcal{N}(t)(\tau,x)) =&~ h_s(t,0,\mathcal{N}(t)(\tau,x))+h_u(t,0,\mathcal{N}(t)(\tau,x))
\\
=&~ \mathcal{N}^u(t)(\tau,(id-\pi(\tau)) \mathfrak{S}(\tau,x)) + \mathcal{N}^s(t)(\tau,\pi(\tau)\mathfrak{S}(\tau,x)),
\end{align*}
which gives \eqref{Sp-5}. On the other hand, by also using \eqref{SS-1}, \eqref{SS-2} and {\bf (IP.1)} and {\bf (IP.2)}, we obtain
\begin{align*}
\pi(t)\mathcal{N}(t)(\tau,\hat{\mathfrak{S}}(\tau,x))
=&~ \pi(t) \mathcal{N}(t)(\tau,\cdot) \circ (\pi(\tau)\hat{\mathfrak{S}}(\tau,x)+(id-\pi(\tau))\hat{\mathfrak{S}}(\tau,x))
\\
=&~ \pi(t)\mathcal{N}(t)(\tau,\cdot)\circ ((id-\pi(\tau))\hat{\mathfrak{S}}(\tau,x)+h_u(\tau,(id-\pi(\tau))\hat{\mathfrak{S}}(\tau,x),\pi(\tau)x ))
\\
=&~ h_u(t,\pi(t)\mathcal{N}(t)(\tau,\cdot)\circ  ((id-\pi(\tau))\hat{\mathfrak{S}}(\tau,x)
\\
&\quad +h_u(\tau,(id-\pi(\tau))\hat{\mathfrak{S}}(\tau,x),\pi(\tau)x)),\mathcal{N}(t)(\tau,\pi(\tau)x))
\\
=&~ h_u(t,(id-\pi(t))\mathcal{N}(t)(\tau,\hat{\mathfrak{S}}(\tau,x)),\mathcal{N}(t)(\tau,\pi(\tau)x))
\\
=&~ h_u(t,(id-\pi(t))\mathcal{N}(t)(\tau,\hat{\mathfrak{S}}(\tau,x)),\pi(t)\mathcal{N}(t)(\tau,\cdot)\circ id_s\circ  \pi(\tau)x).
\end{align*}
Likewise, we obtain
$$
(id-\pi(t))\mathcal{N}(t)(\tau,\hat{\mathfrak{S}}(\tau,x))
= h_s(t,\pi(t)\mathcal{N}(t)(\tau,\hat{\mathfrak{S}}(\tau,x)),(id-\pi(t))\mathcal{N}(t,\cdot)\circ id_u \circ (id-\pi(t))x).
$$
Therefore,
\begin{align*}
\mathcal{N}(t)(\tau,\hat{\mathfrak{S}}(\tau,x))
=&~  \pi(t)\mathcal{N}(t)(\tau,\hat{\mathfrak{S}}(\tau,x)) + (id-\pi(t))\mathcal{N}(t)(\tau,\hat{\mathfrak{S}}(\tau,x))
\\
=&~ h_u(t,(id-\pi(t))\mathcal{N}(t)(\tau,\hat{\mathfrak{S}}(\tau,x)),\pi(t)\mathcal{N}(t)(\tau,\cdot)\circ id_s\circ  \pi(\tau)x)
\\
&~ + h_s(t,\pi(t)\mathcal{N}(t)(\tau,\hat{\mathfrak{S}}(\tau,x)),(id-\pi(t))\mathcal{N}(t,\cdot)\circ id_u \circ (id-\pi(t))x).
\end{align*}
In view of the uniqueness of solutions of \eqref{SS-2}, we assert that
$$
\mathcal{N}(t)(\tau,\hat{\mathfrak{S}}(\tau,x))
 = \hat{\mathfrak{S}}(t,\cdot)\circ  (\mathcal{N}^s(t)(\tau,\pi(\tau)x)+\mathcal{N}^u(t)(\tau,(id-\pi(\tau))x))),
$$
which proves \eqref{Sp-6}. This completes the proof of Lemma \ref{split-lem}.
\end{proof}

We proceed with the proof of Lemma \ref{lem-spli}. From  Lemma \eqref{split-lem}, it follows that
$$
\mathfrak{S}(t,\cdot)\circ \mathcal{N}(t)(\tau,\cdot)\circ \mathfrak{S}^{-1}(\tau,x)
= \mathcal{N}^s(t)(\tau,\pi(\tau)x)+\mathcal{N}^u(t)(\tau,(id-\pi(\tau))x).
$$
Noting that by {\bf (IP.1)},
$$
 \mathcal{N}^s(t)(\tau,\pi(\tau)x)
=\Psi(t,\tau)\pi(\tau)x+\int_{\tau}^t \Psi(t,\kappa)\pi(\kappa)f(\kappa,\mathcal{N}^s(\kappa)(\tau,\pi(\tau)x)) \, d\kappa,
$$
which is a solution of the first equality of \eqref{dec-eqs}, and similarly, $\mathcal{N}^u(t)(\tau,(id-\pi(\tau))x)$ is a solution of the
second equality of  \eqref{dec-eqs}. Consequently, the proof of Lemma \ref{lem-spli} is completed.
\end{proof}

\section{Proof of  Theorem \ref{thm-lin}}

Now we are ready to prove Theorem \ref{thm-lin}.

\begin{proof}[Proof of Theorem \ref{thm-lin}.]
Let $x(t):=x(t,\tau,x)$ be the solution of Eq. \eqref{nonlin-eq} with the initial value $x(\tau)=x\in\mathbb{R}^n$.
Then by Lemma \ref{lem-spli}, we understand that
$$
x(t,\tau,x)=x_s(t,\tau,x_0)+x_u(t,\tau,x_1),
$$
where $x_s(t,\tau,x_0)$ is the solution of  the first equation of \eqref{dec-eqs} with $x_s(\tau)=x_0\in \mathcal{X}_\tau^s$
and $x_u(t,\tau,x_1)$ is the solution of the second equation of \eqref{dec-eqs} with $x_u(\tau)=x_1\in \mathcal{X}_\tau^u$.
Then we have the following two lemmas.

\begin{lemma}\label{con-lin}
Suppose that $f(t,x)$ satisfies the set $\mathcal{A}_f$ with a small constant $\delta_f>0$.
Then if
$\theta\ge   \max\{\nu,\omega\}$, $\lambda_s<\nu-\theta$ and $\lambda_u>\theta-\omega$, then
there is a homeomorphism $\mathfrak{F}_s$  such that
$\mathfrak{F}_s(t,x_s(t,\tau,x_0))=\Psi_s(t,\tau)\mathfrak{F}_s(\tau,x_0)$ for all $t,\tau\in\mathbb{R}$.
\end{lemma}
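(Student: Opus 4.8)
The plan is to conjugate the contractive block of \eqref{dec-eqs}, namely $x_s'=A_s(t)x_s+\pi(t)f(t,x_s)$, to its linear part $x_s'=A_s(t)x_s$ by means of the crossing time with respect to the unit sphere of a strict quadratic Lyapunov function, as announced in the Introduction. First I would invoke Proposition \ref{V-thm2} for the contractive linear system to get $U_s(t,x)=\langle S_s(t)x,x\rangle$; since here the stable subspace is the whole ambient space we have $\mathcal{E}_t^s=\mathcal{X}_t^s$, so $S_s(t)>0$, $U_s(t,\cdot)>0$ off the origin, and by \eqref{S-b} and the identity of Remark \ref{Rem-5},
\[
\frac{d}{dt}U_s\bigl(t,\Psi_s(t,\tau)x\bigr)\le-\frac{2\mu'(t)}{\mu(t)}\,\|\Psi_s(t,\tau)x\|^2-\frac{2\lambda_u\mu'(t)}{\mu(t)}\,U_s\bigl(t,\Psi_s(t,\tau)x\bigr).
\]
The crucial preliminary step is that the same strict decrease survives along the nonlinear flow $x_s(t,\tau,x_0)$. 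Thanks to the manifold straightening that precedes \eqref{dec-eqs} one has $f(t,x_s)\in\mathcal{X}_t^s$ whenever $x_s\in\mathcal{X}_t^s$, so $\pi(t)f(t,x_s)=f(t,x_s)$ and differentiating $t\mapsto U_s(t,x_s(t,\tau,x_0))$ only adds the term $2\langle S_s(t)x_s(t),f(t,x_s(t))\rangle$, whose modulus is $\le 2\|S_s(t)\|\,\varphi(t)\,\|x_s(t)\|^2$; inserting the bound \eqref{S-a} for $\|S_s(t)\|$ and the explicit form \eqref{ff-Lip} of $\varphi$, the hypothesis $\theta\ge\max\{\nu,\omega\}$ makes the resulting exponent of $\mu$ at most $-1$ for every $t$ (the sign change at $t=0$ being harmless precisely because of that inequality), so this term is $\le\frac{D^2}{\eta}\delta_f\,\frac{\mu'(t)}{\mu(t)}\|x_s(t)\|^2$ with $\eta$ as in \eqref{S-a}. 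For $\delta_f$ small it is absorbed into $-\frac{2\mu'}{\mu}\|x_s\|^2$, leaving $\frac{d}{dt}U_s(t,x_s(t))\le-\frac{2\lambda_u\mu'(t)}{\mu(t)}U_s(t,x_s(t))<0$. Integrating this and using $\lambda_u>0$ together with $\lim_{t\to+\infty}\mu(t)=+\infty$ and $\lim_{t\to-\infty}\mu(t)=0$ shows that, for each $x_0\ne0$, $t\mapsto U_s(t,x_s(t,\tau,x_0))$ is a strictly decreasing homeomorphism of $\mathbb{R}$ onto $(0,+\infty)$; the same holds along the linear flow.

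Hence for each $(\tau,x_0)$ with $x_0\ne0$ there is a unique time $\mathfrak{t}_N(\tau,x_0)$ with $U_s(\mathfrak{t}_N,x_s(\mathfrak{t}_N,\tau,x_0))=1$, and a unique $\mathfrak{t}_L(\tau,\xi)$ with $U_s(\mathfrak{t}_L,\Psi_s(\mathfrak{t}_L,\tau)\xi)=1$; transversality of the crossing (from $\frac{d}{dt}U_s<0$) together with the implicit function theorem gives continuity of both off the origin, with $\mathfrak{t}_N(\tau,x_0),\mathfrak{t}_L(\tau,\xi)\to-\infty$ as $x_0,\xi\to0$. I then set
\[
\mathfrak{F}_s(\tau,x_0)=\Psi_s\bigl(\tau,\mathfrak{t}_N(\tau,x_0)\bigr)\,x_s\bigl(\mathfrak{t}_N(\tau,x_0),\tau,x_0\bigr)\ \ (x_0\ne0),\qquad \mathfrak{F}_s(\tau,0)=0,
\]
that is, I slide the nonlinear orbit to the instant it meets the unit sphere of $U_s$ and carry that crossing point back to time $\tau$ along the linear flow; as candidate inverse I take $\mathfrak{G}_s(\tau,\xi)=x_s(\tau,\mathfrak{t}_L(\tau,\xi),\Psi_s(\mathfrak{t}_L(\tau,\xi),\tau)\xi)$ for $\xi\ne0$ and $\mathfrak{G}_s(\tau,0)=0$. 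The conjugacy identity is then automatic because the crossing time and crossing point depend only on the orbit: $\mathfrak{t}_N(t,x_s(t,\tau,x_0))=\mathfrak{t}_N(\tau,x_0)$ and $x_s(\mathfrak{t}_N,t,x_s(t,\tau,x_0))=x_s(\mathfrak{t}_N,\tau,x_0)$, whence $\mathfrak{F}_s(t,x_s(t,\tau,x_0))=\Psi_s(t,\mathfrak{t}_N)x_s(\mathfrak{t}_N,\tau,x_0)=\Psi_s(t,\tau)\mathfrak{F}_s(\tau,x_0)$. Moreover $\mathfrak{F}_s(\tau,x_0)$ lies on the linear orbit that crosses the unit sphere at time $\mathfrak{t}_N(\tau,x_0)$ at the point $x_s(\mathfrak{t}_N,\tau,x_0)$, so $\mathfrak{t}_L(\tau,\mathfrak{F}_s(\tau,x_0))=\mathfrak{t}_N(\tau,x_0)$ and $\mathfrak{G}_s(\tau,\mathfrak{F}_s(\tau,x_0))=x_0$; the opposite composition is symmetric, so $\mathfrak{F}_s(\tau,\cdot)$ is a bijection of $\mathbb{R}^n$ with inverse $\mathfrak{G}_s(\tau,\cdot)$.

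It remains to prove joint continuity of $\mathfrak{F}_s$ and $\mathfrak{G}_s$ on $\mathbb{R}\times\mathbb{R}^n$, and this is the step I expect to be the main obstacle. Off the origin it follows from the continuity of $\mathfrak{t}_N,\mathfrak{t}_L$ just obtained together with continuous dependence of the Carath\'{e}odory solutions of \eqref{dec-eqs} and of $\Psi_s$ on time arguments and initial data. Continuity at the origin is the delicate point, and I would establish it not through the (merely nonuniform) norm bounds on $\Psi_s$ but through the Lyapunov function itself: writing $s^\ast=\mathfrak{t}_N(\tau,x_0)<\tau$ and integrating the linear inequality of the first paragraph from $s^\ast$ to $\tau$ along the linear orbit $\kappa\mapsto\Psi_s(\kappa,s^\ast)x_s(s^\ast,\tau,x_0)$, which passes through $\mathfrak{F}_s(\tau,x_0)$ at time $\tau$, gives $U_s(\tau,\mathfrak{F}_s(\tau,x_0))\le(\mu(s^\ast)/\mu(\tau))^{2\lambda_u}$, and this tends to $0$ as $x_0\to0$ because $s^\ast\to-\infty$, $\mu(s^\ast)\to0$ and $\lambda_u>0$; since $S_s(\tau)$ is a fixed positive definite matrix for fixed $\tau$, this forces $\mathfrak{F}_s(\tau,x_0)\to0=\mathfrak{F}_s(\tau,0)$, and the same computation with the nonlinear inequality (valid after the absorption step) gives $\mathfrak{G}_s(\tau,\xi)\to0$. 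The bookkeeping of the nonuniformity across $t=0$ in the absorption step, and the joint-continuity verification — in which the standing hypotheses of Theorem \ref{thm-lin}, via Lemma \ref{non-esti} and Proposition \ref{Pro18}, are used to keep $\|x_s(\cdot,\tau,x_0)\|$ and $\|S_s(\cdot)\|$ under control uniformly in time — form the bulk of the work; once they are in place, $\mathfrak{F}_s(\tau,\cdot)$ is a homeomorphism of $\mathbb{R}^n$ for every $\tau$ and Lemma \ref{con-lin} follows, the expansive case being entirely analogous with time reversed.
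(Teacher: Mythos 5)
Your proposal follows essentially the same approach as the paper: you build a strict quadratic Lyapunov function on the contractive block (the paper's $V$ with $V=-1$ coincides on the stable space with your $U_s=1$), establish monotone crossing of the unit level set along both the nonlinear and linear flows, define $\mathfrak{F}_s$ by sliding the nonlinear orbit to the crossing instant and pulling back along the linear flow, obtain the conjugacy identity from invariance of the crossing time, and prove continuity at the origin by integrating the Lyapunov differential inequality from the crossing time up to $\tau$ (the paper does this through Definition \ref{str-V}-(2),(3) and the estimate \eqref{VF-3} obtained from Lemma \ref{non-esti}, but the content is the same). The only cosmetic difference is that the paper carries the perturbation term inside the auxiliary function $\mathfrak{U}(t)$ rather than absorbing it immediately, and works with $V$ instead of $U$; your outline matches the paper's \eqref{TF}, \eqref{TL}, \eqref{FL-T}, and \eqref{VL-0} step for step.
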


\begin{lemma}\label{exp-lin}
Suppose that $f(t,x)$ satisfies the set $\mathcal{A}_f$ with a small constant $\delta_f>0$.
Then if
$\theta\ge   \max\{\nu,\omega\}$, $\lambda_s<\nu-\theta$ and $\lambda_u>\theta-\omega$, then
there is a homeomorphism $\mathfrak{F}_u$  such that
$\mathfrak{F}_u(t,x_u(t,\tau,x_1))=\Psi_u(t,\tau)\mathfrak{F}_u(\tau,x_1)$ for all $t,\tau\in\mathbb{R}$.
\end{lemma}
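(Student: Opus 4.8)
The plan is to linearize the expansive block of \eqref{dec-eqs} by a ``crossing-time'' change of variables built from the strict quadratic Lyapunov function attached to the unstable part of the linear equation; Lemma \ref{con-lin} follows from the same argument under the time reversal $t\mapsto-t$ (equivalently, using the contractive Lyapunov function on $\mathcal{X}_t^s$), so I only describe Lemma \ref{exp-lin}. By Proposition \ref{V-thm2} the unstable block admits a strict quadratic Lyapunov function, realized explicitly through the operator $S(t)$ of Proposition \ref{Pro18}; restricted to $\mathcal{X}_t^u=\mathcal{E}_t^u$ it has the form $V(t,x)=\sqrt{-\langle S(t)x,x\rangle}$, which is positive definite there by \eqref{S-c}. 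By Definition \ref{str-V} together with \eqref{u-grow}, the map $\kappa\mapsto V(\kappa,\Psi_u(\kappa,\tau)x)$ is strictly increasing along the linear flow, tends to $0$ as $\kappa\to-\infty$ and to $+\infty$ as $\kappa\to+\infty$; hence each nonzero linear orbit meets the unit sphere $\Sigma_t=\{x\in\mathcal{X}_t^u:V(t,x)=1\}$ exactly once. Because $f\in\mathcal{A}_f$ with $f(t,0)=0$ and $\varphi(t)=\delta_f\,\mu(t)^{-1-\mathrm{sign}(t)\theta}\mu'(t)$ is calibrated to the $\mu$-growth, for $\delta_f$ small the perturbation $(id-\pi(t))f(t,\cdot)$ destroys neither the strict monotonicity of $V$ nor its limits $0$ and $+\infty$ along nonlinear orbits; combined with the two-sided estimates of Lemma \ref{non-esti} (applied with $\tilde{x}_0=0$) this shows that each nonzero orbit of $x_u'(t)=A_u(t)x_u+(id-\pi(t))f(t,x_u)$ also crosses $\Sigma_t$ exactly once.

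Denote the unique crossing times by $\sigma_N(\tau,x_1)$ and $\sigma_L(\tau,y_1)$, so $V\big(\sigma_N(\tau,x_1),x_u(\sigma_N(\tau,x_1),\tau,x_1)\big)=1$ and $V\big(\sigma_L(\tau,y_1),\Psi_u(\sigma_L(\tau,y_1),\tau)y_1\big)=1$. I set $\mathfrak{F}_u(\tau,0)=0$ and, for $x_1\ne0$, $s^\ast=\sigma_N(\tau,x_1)$, $p=x_u(s^\ast,\tau,x_1)\in\Sigma_{s^\ast}$, $\mathfrak{F}_u(\tau,x_1)=\Psi_u(\tau,s^\ast)p$; the candidate inverse $\mathfrak{G}_u(\tau,\cdot)$ is defined symmetrically, by following the linear orbit to $\Sigma$ and then flowing back along the nonlinear system. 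The conjugacy identity is then immediate: the nonlinear orbit through $(t,x_u(t,\tau,x_1))$ is the one through $(\tau,x_1)$, hence has the same crossing data $(s^\ast,p)$, and therefore $\mathfrak{F}_u(t,x_u(t,\tau,x_1))=\Psi_u(t,s^\ast)p=\Psi_u(t,\tau)\mathfrak{F}_u(\tau,x_1)$; uniqueness of the crossing times shows $\mathfrak{F}_u$ and $\mathfrak{G}_u$ are mutually inverse, and $\mathfrak{F}_u(t,\cdot)$ fixes $\Sigma_t$ pointwise.

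It remains to prove that $\mathfrak{F}_u(t,\cdot):\mathcal{X}_t^u\to\mathcal{X}_t^u$ is a homeomorphism for each $t$. On $\mathcal{X}_t^u\setminus\{0\}$ this is routine: continuous dependence of solutions on initial data together with the non-vanishing of $\tfrac{d}{d\kappa}V(\kappa,x_u(\kappa,\tau,x_1))$ at the crossing instant give, via the implicit function theorem, continuity of $\sigma_N$ and $\sigma_L$, hence of $\mathfrak{F}_u$ and $\mathfrak{G}_u$ there. The genuine difficulty is continuity at $x_1=0$, uniformly in $t$: one must control how far $\sigma_N(\tau,x_1)$ drifts and how large $\|\Psi_u(\tau,s^\ast)p\|$ can be as $x_1\to0$, and produce a modulus of continuity independent of $\tau$. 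This is exactly where the nonuniform weights $\mu(\tau)^{\pm\mathrm{sign}(\tau)\omega}$, the structural hypotheses $\theta\ge\max\{\nu,\omega\}$ and $\lambda_u>\theta-\omega$, the norm bound \eqref{S-a}, the lower bound \eqref{S-c} for $-S(t)$ on $\mathcal{E}_t^u$, and the sharp two-sided growth estimates of Lemma \ref{non-esti} have to be combined. I expect this uniform-in-$t$ control near the origin, together with checking that the small Lipschitz perturbation preserves the strict monotonicity of $V$ along nonlinear orbits, to be the main obstacle; the remaining identities are purely formal.
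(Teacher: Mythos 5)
Your plan identifies the same mechanism the paper uses: a crossing-time change of variables built from the strict quadratic Lyapunov function $V(t,x)=\sqrt{-\langle S(t)x,x\rangle}$ on $\mathcal{X}_t^u$, with $\mathfrak{F}_u(\tau,x_1)=\Psi_u(\tau,\sigma_N(\tau,x_1))\,x_u(\sigma_N(\tau,x_1),\tau,x_1)$ for $x_1\neq 0$, an inverse defined symmetrically from the linear flow's crossing time, and the conjugacy identity obtained by the orbit-invariance of the crossing data. That is precisely the pattern the paper follows for $\mathfrak{F}_s$ and $\mathfrak{L}_s$ in the proof of Lemma~\ref{con-lin}, transposed to the unstable block; so the blueprint is right.

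However, what you have written is an outline, not a proof, and you say so yourself. The two items you flag as ``the main obstacle'' are exactly the analytical substance of the argument. First, you assert that the perturbation ``destroys neither the strict monotonicity of $V$ nor its limits'' along nonlinear orbits, but you never derive the differential inequality that makes this true. In the paper's treatment of the stable case this is the explicit computation
$\frac{d}{dt}U(t,x_s(t))\le -2\bigl\{\tfrac{\mu'(t)}{\mu(t)}+(\lambda_u\tfrac{\mu'(t)}{\mu(t)}-\varphi(t))\|S(t)\|\bigr\}\|x_s(t)\|^2$,
together with the observation that $\lambda_u\tfrac{\mu'(t)}{\mu(t)}-\varphi(t)=(\lambda_u-\delta_f\mu(t)^{-\mathrm{sign}(t)\theta})\tfrac{\mu'(t)}{\mu(t)}>0$ for $\delta_f$ small; the coefficient $\mathfrak{U}(t)$ then controls the integral and forces the limits $0$ and $-\infty$. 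For the unstable block you need the analogous inequality for $\tfrac{d}{dt}U(t,x_u(t))$ on $\mathcal{E}_t^u$, where the sign of $U$ and the relevant side of~\eqref{S-b} both change; this is not a ``time reversal'' of the stable computation (time reversal changes $A(t)$ and the dichotomy data and would have to be tracked), and the resulting coefficient must still be shown to have a definite sign when the $\mathcal{A}_f$-perturbation terms $\langle S(t)(id-\pi(t))f,x_u\rangle$ are folded in. Second, you leave the continuity of $\mathfrak{F}_u$ and $\mathfrak{G}_u$ at $x_1=0$ entirely open. This is where the paper spends most of its effort: it bounds the crossing time via the analogue of~\eqref{VF-3} using Lemma~\ref{non-esti}, and converts that into a small-norm bound for $\mathfrak{F}_u(\tau,x_1)$ via Definition~\ref{str-V}(1)(3) and~\eqref{u-grow}, as in~\eqref{VF-4}--\eqref{VF-5}. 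Without this estimate the map is not shown to be a homeomorphism, which is the actual content of the lemma. So the proposal is directionally correct but has two genuine gaps; both require computations that do not follow ``for $\delta_f$ small'' without being carried out.
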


In what follows, we only prove Lemma \ref{con-lin} and place it at the end of this section.
The proof of Lemma  \ref{exp-lin} can be obtained similarly. We now proceed to prove Theorem  \ref{thm-lin}.
It follows from Lemmas \ref{con-lin} and \eqref{exp-lin} that there is a homeomorphism $\mathfrak{F}(\tau,\cdot):\mathbb{R}^n \to \mathbb{R}^n$ defined as
$$
\mathfrak{F}(\tau,x):=\mathfrak{F}_s(\tau,x_0)+\mathfrak{F}_u(\tau,x_1)
$$
such that
\begin{align*}
\mathfrak{F}(t,x(t,\tau,x))=&~ \mathfrak{F}_s(t,x_s(t,\tau,x_0))+\mathfrak{F}_u(t,x_u(t,\tau,x_1))
\\
=&~ \Psi_s(t,\tau)\mathfrak{F}_s(\tau,x_0)+\Psi_u(t,\tau)\mathfrak{F}_u(\tau,x_1)
\\
=&~ \Psi(t,\tau)\pi(\tau) \mathfrak{F}_s(\tau,x_0)+\Psi(t,\tau)(id-\pi(\tau))\mathfrak{F}_u(\tau,x_1)
\\
=&~ \Psi(t,\tau)\mathfrak{F}(\tau,x).
\end{align*}
Then by Lemma \ref{lem-spli}, we can define a homeomorphism $\mathfrak{G}:(\tau,\cdot):\mathbb{R}^n \to \mathbb{R}^n$ as
$$
\mathfrak{G}(\tau,x):=\mathfrak{F}(\tau,\cdot)\circ \mathfrak{S}(\tau,x)
$$
such that Eq. \eqref{nonlin-eq} and Eq. \eqref{lin-eq} are topological conjugacy. This complete the proof of Theorem \ref{thm-lin}.
\end{proof}

Finally, it remains to prove Lemma \ref{con-lin}.

\begin{proof}[Proof of Lemma \ref{con-lin}.]
Note that $\pi(t)\Psi(t,s)=\Psi(t,s)\pi(s)$. Then,  $\pi(t)$ is differentiable in $t$ and
  the chain rule applied to this yields
$$
 \pi'(t)\Psi(t,s)+\pi(t)A(t)\Psi(t,s)=A(t)\Psi(t,s)\pi(s),
$$
and setting $s=t$, we have
\begin{equation}\label{P-expr}
  \pi'(t)=A(t)\pi(t)-\pi(t)A(t).
\end{equation}
Then, we obtain from Definition \ref{quad-V}, Remark \ref{Rem-41}, \eqref{S-b}, the set $\mathcal{A}_f$ and \eqref{UUU-1} that
\begin{align*}
\frac{d}{dt} U(t,x_s(t))=&~\frac{d}{dt} U(t,\pi(t)x(t))= \frac{d}{dt} \langle S(t)\pi(t)x(t),\pi(t)x(t) \rangle
\\
=&~  \langle S'(t)\pi(t)x(t),\pi(t)x(t)\rangle+\langle S(t)(A(t)\pi(t)-\pi(t)A(t))x(t),\pi(t)x(t)\rangle
\\
&~ +\langle S(t)\pi(t)(A(t)x(t)+f(t,x(t))),\pi(t)x(t)\rangle
\\
&~ + \langle S(t)\pi(t)x(t), (A(t)\pi(t)-\pi(t)A(t))x(t) \rangle
\\
&~+  \langle S(t)\pi(t)x(t), \pi(t)(A(t)x(t)+f(t,x(t))) \rangle
\\
=&~ \langle S'(t)x_{s}(t),x_{s}(t) \rangle+\langle S(t)A(t)x_{s}(t),x_{s}(t) \rangle + \langle S(t) x_{s}(t),A(t) x_{s}(t) \rangle
\\
&~ + \langle S(t)\pi(t) f(t,x(t)),x_s(t)\rangle + \langle S(t)x_s(t), \pi(t)f(t,x(t)) \rangle
\\
=&~  \langle (S'(t)+S(t)A(t)+A(t)^*S(t))x_s(t),x_s(t) \rangle
\\
&~ +   \langle S(t)\pi(t) f(t,x(t)),x_s(t)\rangle + \langle S(t)x_s(t), \pi(t)f(t,x(t)) \rangle
\\
=&~ \langle (S'(t)+S(t)A(t)+A(t)^*S(t))x_s(t),x_s(t) \rangle
\\
&~ + \langle S(t) (\pi(t)f(t,x(t))-\pi(t)f(t,x_u(t))),x_s(t)\rangle
\\
&~ +  \langle S(t)x_s(t), \pi(t)f(t,x(t))-\pi(t)f(t,x_u(t)) \rangle
\\
\le&~ -\frac{2 \mu'(t)}{\mu(t)} \langle (id+\lambda_u S(t))x_s(t),x_s(t) \rangle
\\
&~ +  2\|S(t)\| \| \pi(t)f(t,x(t))-\pi(t)f(t,x_u(t))\| \|x_s(t)\|
\\
\le &~ -2 \left\{\frac{\mu'(t)}{\mu(t)}+\left(\lambda_u \frac{\mu'(t)}{\mu(t)}-\varphi(t) \right) \|S(t)\| \right\}\|x_s(t)\|^2
\\
\le&~ \frac{-2 \eta\left\{\frac{\mu'(t)}{\mu(t)}+\left(\lambda_u \frac{\mu'(t)}{\mu(t)}-\varphi(t) \right) \|S(t)\| \right\}}{D^2
    \max\left\{\mu(t)^{\mathrm{sign}(t)\nu},  \mu(t)^{\mathrm{sign}(t)\omega}\right\}} \, U(t,x_s(t)).
\end{align*}
For convenience, we write
$$
\mathfrak{U}(t):=\left\{\frac{\mu'(t)}{\mu(t)}+\left(\lambda_u \frac{\mu'(t)}{\mu(t)}-\varphi(t) \right) \|S(t)\| \right\}\Big/
 \max\left\{\mu(t)^{\mathrm{sign}(t)\nu},  \mu(t)^{\mathrm{sign}(t)\omega}\right\}.
$$
 Here, $\|S(t)\|$ satisfies property  \eqref{S-a}.
Clearly, $\mathfrak{U}(t)\in\mathbb{R}$ is continuous.
From the   last inequality, we affirm that
$$
U(t,x_s(t)) \le \exp\left\{\frac{-2 \eta}{D^2}\int_\tau^t\mathfrak{U}(\kappa) \, d\kappa\right\} U(\tau,x_s(\tau)), \quad \forall t\ge \tau.
$$
Then we give a qualitative analysis for  the integral
\begin{align}\label{In-U}
\frac{-2 \eta}{D^2}\int_\tau^t\mathfrak{U}(\kappa) \, d\kappa.
\end{align}
By \eqref{ff-Lip}, we have
$$
 \lambda_u \frac{\mu'(t)}{\mu(t)}-\varphi(t)= \left(\lambda_u -\delta_f \mu(t)^{-\mathrm{sign}(t)\theta} \right) \frac{\mu'(t)}{\mu(t)}.
$$
Since $\mu(t)^{-\mathrm{sign}(t)\aleph}\in (0,1], (\aleph=\nu,\omega,\theta)$ for all $t\in\mathbb{R}$ and $\delta_f$ is  small enough, we get
$\lambda_u >\delta_f \mu(t)^{-\mathrm{sign}(t)\theta}$, and thus, $\mathfrak{U}(t)>0$ because of  $\mu'(t)\ge 0$. Hence, we assert that   integral \eqref{In-U} is
strictly decreasing and
\begin{align}\label{UD}
\frac{-2 \eta}{D^2}\int_\tau^t\mathfrak{U}(\kappa) \, d\kappa =: \mathfrak{W}(t)-\mathfrak{W}(\tau)<0.
\end{align}
Recall that $U(t,x_s(t))=V^2 (t,x_s(t))$ and $V(t,x_s(t))<0$ on $\mathcal{X}_t^s\backslash\{0\}$. Then,
$$
V(t,x_s(t)) \ge V(\tau,x_s(\tau)) \exp\left\{\frac{- \eta}{D^2}\int_\tau^t\mathfrak{U}(\kappa) \, d\kappa\right\},
$$
which together with \eqref{UD} implies that $V(t,x_s(t))$ is a strictly increasing function   such that
$$
\text{ $\lim_{t\to+\infty} V(t,x_s(t))=0$ \, and \, $\lim_{t\to -\infty} V(t,x_s(t))=-\infty$.}
$$
Thus, there is a unique time $ \ell_s(\tau,x_0)\in\mathbb{R}$ satisfying
\begin{align}\label{VF-1}
V(\ell_s(\tau,x_0),x_s(\ell_s(\tau,x_0),\tau,x_0))=-1.
\end{align}
For each $x_0\ne 0$, it is easy to see that $\ell_s(\tau,x_0)$ is continuous in $(\tau,x_0)$. 

Similar to the above procedure, if $t\mapsto \Psi(t,\tau)x_0$ is the solution of Eq. \eqref{lin-eq}, then we can deduce that
$$
V(t,\Psi_s(t,\tau)x_0) \ge V(\tau,\pi(\tau)x_0) \exp\left\{\frac{- \eta}{D^2}\int_\tau^t\mathfrak{V}(\kappa) \, d\kappa\right\},
$$
where $\Psi_s(t,\tau)x_0 : =\pi(t)\Psi(t,\tau)x_0$ and
 $$
\mathfrak{V}(t):=\left\{\frac{\mu'(t)}{\mu(t)}+ \lambda_u \frac{\mu'(t)}{\mu(t)}  \|S(t)\| \right\}\Big/
 \max\left\{\mu(t)^{\mathrm{sign}(t)\nu},  \mu(t)^{\mathrm{sign}(t)\omega}\right\}.
$$
It indicates that $V(t,\Psi_s(t,\tau)x_0)$ is a strictly increasing function   such that
$$
\text{ $\lim_{t\to+\infty} V(t,\Psi_s(t,\tau)x_0)=0$ \, and \, $\lim_{t\to -\infty} V(t,\Psi_s(t,\tau)x_0)=-\infty$.}
$$
Hence, there exists  a unique time $\kappa_s(\tau,x_0)\in\mathbb{R}$ satisfying
\begin{align}\label{VF-1-1}
V(\kappa_s(\tau,x_0),\Psi_s(\kappa_s(\tau,x_0),\tau)x_0)=-1.
\end{align}
Clearly,  for $x_0\ne 0$, $\kappa_s(\tau,x_0)$ is continuous  in $(\tau,x_0)$. 

Now we use the  functions $\ell_s(\tau,x_0)$ and $\kappa_s(\tau,x_0)$ to define two mappings $\mathfrak{F}_s(\tau,x_0)$ and $\mathfrak{L}_s(\tau,x_0)$ as follows:
$$
\mathfrak{F}_s(\tau,x_0):=
\begin{cases}
x_0+\int_\tau^{\ell_s(\tau,x_0)} \Psi_s(\tau,t)\pi(t)f(t,x(t,\tau,x_0)) \, dt, \quad &x_0\ne 0,
\\
0, \quad &x_0=0,
\end{cases}
$$
and
$$
\mathfrak{L}_s(\tau,x_0):=
\begin{cases}
x_0+\int_{\kappa_s(\tau,x_0)}^\tau \Psi_s(\tau,t)\pi(t)f(t,x(t,\kappa_s(\tau,x_0),\Psi_s(\kappa_s,\tau)x_0)) \, dt, \quad &x_0\ne 0,
\\
0, \quad &x_0=0.
\end{cases}
$$
Then the Variation of Constants Formula from the first equation of  \eqref{dec-eqs} yields
\begin{align*}
\mathfrak{F}_s(\tau,x_0)=&~ x_0+\int_\tau^{\ell_s(\tau,x_0)} \Psi_s(\tau,t)\pi(t)f(t,x(t,\tau,x_0)) \, dt
\\
=&~ \Psi_s(\tau,\ell_s(\tau,x_0)) \Psi_s(\ell_s(\tau,x_0),\tau)x_0
\\
&~+ \Psi_s(\tau,\ell_s(\tau,x_0)) \int_\tau^{\ell_s(\tau,x_0)} \Psi_s(\ell_s(\tau,x_0),t)\pi(t)f(t,x(t,\tau,x_0)) \, dt
\\
=&~ \Psi_s(\tau,\ell_s(\tau,x_0)) \pi(\ell_s(\tau,x_0)) x(\ell_s(\tau,x_0),\tau,x_0)
\\
\triangleq &~ \Psi_s(\tau,\ell_s(\tau,x_0)) x_s(\ell_s(\tau,x_0),\tau,x_0), \quad \forall x_0\ne 0,
\end{align*}
and
\begin{align*}
\mathfrak{L}_s(\tau,x_0)=&~ x_0+\int_{\kappa_s(\tau,x_0)}^\tau \Psi_s(\tau,t)\pi(t)f(t,x(t,\kappa_s(\tau,x_0),\Psi_s(\kappa_s,\tau)x_0)) \, dt
\\
=&~ \Psi_s(\tau,\kappa_s(\tau,x_0)) \Psi_s(\kappa_s(\tau,x_0),\tau)x_0
\\
&~+ \int_{\kappa_s(\tau,x_0)}^\tau \Psi_s(\tau,t)\pi(t)f(t,x(t,\kappa_s(\tau,x_0),\Psi_s(\kappa_s,\tau)x_0)) \, dt
\\
=&~ \pi(\tau) x(\tau,\kappa_s(\tau,x_0),\Psi_s(\kappa_s(\tau,x_0),\tau)x_0)
\\
\triangleq&~ x_s(\tau,\kappa_s(\tau,x_0),\Psi_s(\kappa_s(\tau,x_0),\tau)x_0), \quad \forall x_0\ne 0.
\end{align*}
Therefore, we obtain that
\begin{equation}\label{TF}
\mathfrak{F}_s(\tau,x_0):=
\begin{cases}
\Psi_s(\tau,\ell_s(\tau,x_0)) x_s(\ell_s(\tau,x_0),\tau,x_0), \quad &x_0\ne 0,
\\
0, \quad &x_0=0,
\end{cases}
\end{equation}
and
\begin{equation}\label{TL}
\mathfrak{L}_s(\tau,x_0):=
\begin{cases}
x_s(\tau,\kappa_s(\tau,x_0),\Psi_s(\kappa_s(\tau,x_0),\tau)x_0), \quad &x_0\ne 0,
\\
0, \quad &x_0=0.
\end{cases}
\end{equation}

It is obvious that $\mathfrak{F}_s(\tau,x_0)$ and $\mathfrak{L}_s(\tau,x_0)$ are both continuous at $x_0\ne 0$.
We next discuss their continuity at $x_0=0$.
It follows from \eqref{VF-1},  \eqref{u-grow} and Lemma \ref{non-esti} that
\begin{align*}
-1=&~ V(\ell_s(\tau,x_0),x_s(\ell_s(\tau,x_0),\tau,x_0))
\\
\ge&   - C \, \mu(\ell_s(\tau,x_0))^{\mathrm{sign}(\ell_s(\tau,x_0))\epsilon}\|x_s(\ell_s(\tau,x_0),\tau,x_0)\|
\\
\ge&  -CD \, \mu(\ell_s(\tau,x_0))^{\mathrm{sign}(\ell_s(\tau,x_0))\epsilon} \left(\frac{\mu(\ell_s(\tau,x_0))}{\mu(\tau)}\right)^{\mathrm{sign}(\ell_s(\tau,x_0)-\tau)(\lambda_{\max}+\delta_f D)} \mu(\tau)^{\mathrm{sign}(\tau)\theta}\|x_0\|,
\end{align*}
or equivalently,
\begin{align}\label{VF-3}
\left(\frac{\mu(\ell_s(\tau,x_0))}{\mu(\tau)}\right)^{-\mathrm{sign}(\ell_s(\tau,x_0)-\tau)}
\le \left(CD \, \mu(\ell_s(\tau,x_0))^{\mathrm{sign}(\ell_s(\tau,x_0))\epsilon}  \mu(\tau)^{\mathrm{sign}(\tau)\theta}\|x_0\|    \right)^{1/(\lambda_{\max}+\delta_f D)}.
\end{align}
We then claim that
$$
\text{$x_0\in\mathbb{R}^n$, $0<\|x_0\|<  \mu(\tau)^{-\mathrm{sign}(\tau)\epsilon}/C$   $\Longrightarrow$   $\ell_s(\tau,x_0)<\tau$. }
$$
In fact, by \eqref{u-grow}, we see that for any $x_0\in\mathbb{R}^n$ with $0<\|x_0\|<  \mu(\tau)^{-\mathrm{sign}(\tau)\epsilon}/C$,
$$
V(\tau,x_0) \ge -C \mu(\tau)^{\mathrm{sign}(\tau)\epsilon}\|x_0\| > -1, 
$$
which implies that $\ell_s(\tau,x_0)<\tau$ due to $V(t,x)$ is a strictly increasing function for all $t\in\mathbb{R}$.

In view of Definition \ref{str-V}-(3), we have
\begin{align}\label{VF-4}
V(\tau,\mathfrak{F}_s(\tau,x_0)) \le - \mu(\tau)^{-\mathrm{sign}(\tau)\epsilon} \|\mathfrak{F}_s(\tau,x_0)\|/C.
\end{align}
On the other hand, by \ref{VF-1}, \ref{TF}, Definition \ref{str-V}-(2), we get
\begin{align*}
V(\tau,\mathfrak{F}_s(\tau,x_0)) =&~ V(\tau, \Psi_s(\tau,\ell_s(\tau,x_0)) x_s(\ell_s(\tau,x_0),\tau,x_0))
\\
\ge &~ V(\ell_s(\tau,x_0), \Psi_s(\tau,\ell_s(\tau,x_0)) x_s(\ell_s(\tau,x_0),\tau,x_0)) \left(\frac{\mu(\tau)}{\mu(\ell_s(\tau,x_0))}\right)^\beta
\\
\ge&~ - \left(\frac{\mu(\ell_s(\tau,x_0))}{\mu(\tau)}\right)^{-\beta},
\end{align*}
which together with \eqref{VF-4} yields
\begin{align}\label{VF-5}
\|\mathfrak{F}_s(\tau,x_0)\| \le&~ C \mu(\tau)^{\mathrm{sign}(\tau)\epsilon} \left(\frac{\mu(\ell_s(\tau,x_0))}{\mu(\tau)}\right)^{-\beta}
\notag \\
\le&~ C \mu(\tau)^{\mathrm{sign}(\tau)\epsilon}
\left(CD \, \mu(\ell_s(\tau,x_0))^{\mathrm{sign}(\ell_s(\tau,x_0))\epsilon}  \mu(\tau)^{\mathrm{sign}(\tau)\theta}\|x_0\|    \right)^{-\beta/(\lambda_{\max}+\delta_f D)}
\notag \\
\le&~ C \mu(\tau)^{\mathrm{sign}(\tau)\epsilon}
\left(BCD \,   \mu(\tau)^{\mathrm{sign}(\tau)\theta}\|x_0\|    \right)^{-\beta/(\lambda_{\max}+\delta_f D)},
\end{align}
where $\mu(\ell_s(\tau,x_0))^{\mathrm{sign}(\ell_s(\tau,x_0))\epsilon} \le B$ for
any given $\tau\in\mathbb{R}$ and some constant $B>0$. 
The last inequality implies that $\|\mathfrak{F}_s(\tau,x_0)\| \to 0$ as $\|x_0\| \to 0$ for each fixed  $\tau\in\mathbb{R}$,
namely,  $\mathfrak{F}_s(\tau,x_0)$ is continuous  at $x_0=0$.

On can check that $\mathfrak{L}_s(\tau,x_0)$ is continuous at $x_0=0$ in an analogous way. Actually, similar to \eqref{VF-3}, we have
\begin{align}\label{VF-6}
\left(\frac{\mu(\kappa_s(\tau,x_0))}{\mu(\tau)}\right)^{-\mathrm{sign}(\kappa_s(\tau,x_0)-\tau)}
\le \left(CD \, \mu(\kappa_s(\tau,x_0))^{\mathrm{sign}(\kappa_s(\tau,x_0))\epsilon}  \mu(\tau)^{\mathrm{sign}(\tau)\theta}\|x_0\|    \right)^{1/\lambda_{\max}},
\end{align}
and $\kappa_s(\tau,x_0)<\tau$ if $0<\|x_0\| < \mu(\tau)^{-\mathrm{sign}(\kappa_s(\tau,x_0))\epsilon}/C$. Similar to \eqref{VF-5}, but using \eqref{VF-1-1},  \eqref{TL} and
\eqref{VF-6} instead of \eqref{VF-1}, \eqref{TF} and \eqref{VF-3}, we obtain
\begin{align*}
\|\mathfrak{L}_s(\tau,x_0)\| \le&~ C \mu(\tau)^{\mathrm{sign}(\tau)\epsilon} \left(\frac{\mu(\kappa_s(\tau,x_0))}{\mu(\tau)}\right)^{-\beta}
\\
\le &~ C \mu(\tau)^{\mathrm{sign}(\tau)\epsilon}
\left(CD \, \mu(\kappa_s(\tau,x_0))^{\mathrm{sign}(\kappa_s(\tau,x_0))\epsilon}  \mu(\tau)^{\mathrm{sign}(\tau)\theta}\|x_0\|    \right)^{-\beta/\lambda_{\max}}
\\
\le&~ C \mu(\tau)^{\mathrm{sign}(\tau)\epsilon} \left(\tilde{B}CD \,   \mu(\tau)^{\mathrm{sign}(\tau)\theta}\|x_0\|    \right)^{-\beta/\lambda_{\max}},
\end{align*}
which indicates that $\mathfrak{L}_s(\tau,x_0)$ is continuous at $x_0=0$ for any fixed $\tau\in\mathbb{R}$.

Next, we claim that
\begin{align}\label{FL-T}
\text{ $\mathfrak{F}_s(t,x_s(t,\tau,x_0))=\Psi_s(t,\tau) \mathfrak{F}_s(\tau,x_0)$ \, and \, $\mathfrak{L}_s(t,\Psi_s(t,\tau)x_0)=x_s(t,\tau,\mathfrak{L}_s(\tau,x_0))$ }
\end{align}
for all $t,\tau \in\mathbb{R}$ and all $x_0\in\mathbb{R}^n$.
In fact, if $x_0=0$, then  $\mathfrak{F}_s(t,x_s(t,\tau,x_0))=\mathfrak{F}_s(t,0)=0$.
In the case of $x_0\ne 0$, by \eqref{TF} we affirm that
\begin{align}\label{VF-8}
\mathfrak{F}_s(t,x_s(t,\tau,x_0))=&~ \Psi_s(t,\ell_s(t,x_s(t,\tau,x_0))) x_s(\ell_s(t,x_s(t,\tau,x_0)),t,x_s(t,\tau,x_0))
\notag \\
=&~ \Psi_s(t,\ell_s(t,x_s(t,\tau,x_0))) x_s(\ell_s(t,x_s(t,\tau,x_0)),\tau,x_0).
\end{align}
From \eqref{VF-1}, it follows that
\begin{align*}
-1=&~ V(\ell_s(\tau,x_0),x_s(\ell_s(\tau,x_0),\tau,x_0))
\\
=&~ V(\ell_s(\tau,x_0), x_s(\ell_s(\tau,x_0),t,x_s(t,\tau,x_0)))
\end{align*}
and
\begin{align*}
-1= V(\ell_s(t,x_s(t,\tau,x_0)),x_s(\ell_s(t,x_s(t,\tau,x_0)),t,x_s(t,\tau,x_0))).
\end{align*}
Recall that $V(t,x)$ is a  strict increasing function with respect to $t$. Then,
$$
\ell_s(t,x_s(t,\tau,x_0))=\ell_s(\tau,x_0),
$$
which together with \eqref{VF-8} and \eqref{TF} yields
\begin{align*}
\mathfrak{F}_s(t,x_s(t,\tau,x_0))=&~ \Psi_s(t,\ell_s(\tau,x_0)) x_s(\ell_s(\tau,x_0),\tau,x_0)
\\
=&~ \Psi_s(t,\tau) \Psi_s(\tau,\ell_s(\tau,x_0)) x_s(\ell_s(\tau,x_0),\tau,x_0)
\\
=&~ \Psi_s(t,\tau) \mathfrak{F}_s(\tau,x_0).
\end{align*}
This proves the first equality of \eqref{VF-8}.
Similarly, one can also obtain that $\mathfrak{L}_s(t,\Psi_s(t,\tau)x_0)=x_s(t,\tau,\mathfrak{L}_s(\tau,x_0))$.   

We then claim that for each $\tau\in\mathbb{R}$ and  $x_0\in\mathbb{R}^n$,
\begin{align}\label{VL-0}
\text{$\mathfrak{F}_s(\tau,\mathfrak{L}_s(\tau,x_0))=x_0$ \, and \, $\mathfrak{L}_s(\tau,\mathfrak{F}_s(\tau,x_0))=x_0$. }
\end{align}
In view of \eqref{TL},
\begin{align}\label{VF-9}
x_s(\kappa_s(\tau,x_0),\tau,\mathfrak{L}_s(\tau,x_0))=&~ x_s(\kappa_s(\tau,x_0),\tau, x_s(\tau,\kappa_s(\tau,x_0),\Psi_s(\kappa_s(\tau,x_0),\tau)x_0))
\notag \\
=&~ \Psi_s(\kappa_s(\tau,x_0),\tau)x_0,
\end{align}
and by \eqref{VF-1-1},  we obtain
\begin{align}\label{VL-1}
-1=&~ V(\kappa_s(\tau,x_0),\Psi_s(\kappa_s(\tau,x_0),\tau)x_0)
\notag \\
=&~ V(\kappa_s(\tau,x_0),x_s(\kappa_s(\tau,x_0),\tau,\mathfrak{L}_s(\tau,x_0))).
\end{align}
On the other hand, by \eqref{VF-1}   we have
\begin{align}\label{VL-3}
-1=V(\ell_s(\tau,\mathfrak{L}_s(\tau,x_0)),x_s(\ell_s(\tau,\mathfrak{L}_s(\tau,x_0)),\tau,\mathfrak{L}_s(\tau,x_0))).
\end{align}
Combining \eqref{VL-1} and \eqref{VL-3}, we assert that
\begin{align}\label{VL-4}
\kappa_s(\tau,x_0)=\ell_s(\tau,\mathfrak{L}_s(\tau,x_0)).
\end{align}
It follows from \eqref{TF}, \eqref{VF-9} and \eqref{VL-4} that
\begin{align*}
\mathfrak{F}_s(\tau,\mathfrak{L}_s(\tau,x_0))=&~ \Psi_s(\tau,\ell_s(\tau,\mathfrak{L}_s(\tau,x_0)))x_s(\ell_s(\tau,\mathfrak{L}_s(\tau,x_0)),\tau,\mathfrak{L}_s(\tau,x_0))
\\
=&~ \Psi_s(\tau,\kappa_s(\tau,x_0)) x_s(\kappa_s(\tau,x_0),\tau,\mathfrak{L}_s(\tau,x_0))
\\
=&~ \Psi_s(\tau,\kappa_s(\tau,x_0))\Psi_s(\kappa_s(\tau,x_0),\tau)x_0=x_0,
\end{align*}
which gives the first equality of \eqref{VL-0}.
Similarly, one can verify that  $\ell_s(\tau,x_0)=\kappa_s(\tau,\mathfrak{F}_s(\tau,x_0))$ and  $\mathfrak{L}_s(\tau,\mathfrak{F}_s(\tau,x_0))=x_0$.
This completes the proof.
\end{proof}

\appendix

\section{Proof of Lemma \ref{non-esti}}\label{App-2}

\begin{proof}[Proof of Lemma \ref{non-esti}.]
The Variation of the Constant Formula tells us that
$$
x(t,\tau,x_0)=\Psi(t,\tau)x_0 + \int_\tau^t \Psi(t,\kappa) f(\kappa,x(\kappa,\tau,x_0)) \, d\kappa
$$
for all $t,\tau\in\mathbb{R}$ and $x_0  \in\mathbb{R}^n$.
In what follows, we only prove the case of \(t \ge \tau\),  regarding  \(t < \tau\) one can be obtained in a similar way.
It follows from \eqref{u-bogr} and the definition of  $\mathcal{A}_f$ that for any $t\ge\tau$,
\begin{align*}
\|x(t,\tau,x_0)-x(t,\tau,\tilde{x}_0)\| \le&~ D \left(\frac{\mu(t)}{\mu(\tau)}\right)^{\lambda_{\max}} \mu(\tau)^{\mathrm{sign}(\tau)\theta} \|x_0-\tilde{x}_0\|
\\
&~ + \int_\tau^t D \left(\frac{\mu(t)}{\mu(\kappa)}\right)^{\lambda_{\max}} \mu(\kappa)^{\mathrm{sign}(\kappa)\theta}\cdot \varphi(\kappa) \|x(\kappa,\tau,x_0)-x(\kappa,\tau,\tilde{x}_0)\| \, d\kappa,
\end{align*}
or equivalently,
\begin{align*}
& \left(\frac{\mu(t)}{\mu(\tau)}\right)^{-\lambda_{\max}} \mu(\tau)^{-\mathrm{sign}(\tau)\theta}\|x(t,\tau,x_0)-x(t,\tau,\tilde{x}_0)\|
\\
&~ \le D \|x_0-\tilde{x}_0\| +  D\int_\tau^t\mu(\kappa)^{\mathrm{sign}(\kappa)\theta} \varphi(\kappa)\cdot
 \left(\frac{\mu(\kappa)}{\mu(\tau)}\right)^{-\lambda_{\max}} \mu(\tau)^{-\mathrm{sign}(\tau)\theta}\|x(\kappa,\tau,x_0)-x(\kappa,\tau,\tilde{x}_0)\| \, d\kappa.
\end{align*}
By applying Gronwall's inequality, we see that
\begin{align}\label{xxx-est}
\left(\frac{\mu(t)}{\mu(\tau)}\right)^{-\lambda_{\max}} \mu(\tau)^{-\mathrm{sign}(\tau)\theta}\|x(t,\tau,x_0)-x(t,\tau,\tilde{x}_0)\|
\le D e^{D \int_\tau^t \varphi(\kappa) \mu(\kappa)^{\mathrm{sign}(\kappa)\theta} \, d\kappa}\|x_0-\tilde{x}_0\|,
  \quad \forall t\ge \tau.
\end{align}
In view of \eqref{ff-Lip}, we get
\begin{align*}
  \int_\tau^t \varphi(\kappa) \mu(\kappa)^{\mathrm{sign}(\kappa)\theta} \, d\kappa
  =&~  \int_\tau^t \delta_f \mu(\kappa)^{-1-\mathrm{sign}(\kappa)\theta} \mu'(\kappa)   \mu(\kappa)^{\mathrm{sign}(\kappa)\theta} \, d\kappa
\\
=&~ \delta_f \int_\tau^t \mu(\kappa)^{-1} d\mu(\kappa) = \delta_f \log\left(\frac{\mu(t)}{\mu(\tau)}\right),
\end{align*}
which together with \eqref{xxx-est} yields
$$
\|x(t,\tau,x_0)-x(t,\tau,\tilde{x}_0)\| \le  D
  \left(\frac{\mu(t)}{\mu(\tau)}\right)^{ \lambda_{\max}+D\delta_f}\mu(\tau)^{\mathrm{sign}(\tau)\theta}\|x_0-\tilde{x}_0\|,
  \quad \forall t\ge \tau.
$$
Replacing $x_0$ and $\tilde{x}_0$ with $x(\tau,t,x_0)$ and $x(\tau,t,\tilde{x}_0)$ respectively, it follows that
$$
\|x(\tau,t,x_0)-x(\tau,t,\tilde{x}_0)\| \ge
\frac{1}{D} \left(\frac{\mu(t)}{\mu(\tau)}\right)^{- (\lambda_{\max}+D\delta_f)}\mu(\tau)^{-\mathrm{sign}(\tau)\theta}
\|x_0-\tilde{x}_0\|.
$$
This completes the proof.
\end{proof}

\section{Proof of  Lemmas  \ref{thm-stab} and  \ref{thm-foli}}\label{App-3}

\begin{proof}[Proof of Lemma \ref{thm-stab}.]
We employ the classical Lyapunov-Perron method to prove the existence of the stable manifold (see \cite{BSV-JLMS,BS-JFA,JDDE-BS,ZWN-AM}).
The objective is to investigate the following Lyapunov-Perron equation:
\begin{align}\label{LP-eqs}
x(t)=&~ \Psi(t,\tau)\pi(\tau)x_0 + \int_\tau^t \Psi(t,\kappa)\pi(\kappa) f(\kappa,x(\kappa,\tau,x_0)) \, d\kappa
\notag \\
&~ -\int_t^{+\infty} \Psi(t,\kappa)(id-\pi(\kappa)) f(\kappa,x(\kappa,\tau,x_0)) \, d\kappa, \quad \forall t\ge \tau,
\end{align}

Let $\mathcal{H}$ denote the space of all continuous mapping $g:[\tau,+\infty) \to \mathbb{R}^n$ such that
$\|g\|_{\mathcal{H}}:=\sup_{t\ge \tau}   \|g(t)\|$.
Then $(\mathcal{H},\|\cdot\|_{\mathcal{H}})$ is a Banach space. Define an operator $\mathcal{T}:\mathcal{H}\to \mathcal{H}$ by
\begin{align*}
\mathcal{T}x(t)=&~ \Psi(t,\tau)\pi(\tau)x_0 + \int_\tau^t \Psi(t,\kappa)\pi(\kappa) f(\kappa,x(\kappa,\tau,x_0)) \, d\kappa
\\
&~ -\int_t^{+\infty} \Psi(t,\kappa)(id-\pi(\kappa)) f(\kappa,x(\kappa,\tau,x_0)) \, d\kappa, \quad \forall t\ge \tau,
\end{align*}
We claim that $\mathcal{T}x\in \mathcal{H}$ for all $x\in \mathcal{H}$.
In fact,
noting that $\theta\ge   \max\{\nu,\omega\}$, $\lambda_s<\nu-\theta$ and $\lambda_u>\theta-\omega$,  we have
$$
\text{$\mu(t)^{-\mathrm{sign}(t)(\theta- \nu)}\in (0,1]$ \, and \, $\mu(t)^{-\mathrm{sign}(t)(\theta- \omega)}\in (0,1]$, \, $\forall t\in\mathbb{R}$, }
$$
and for $t\ge \tau$,
\begin{align*}
& \int_\tau^t    \left(\frac{\mu(t)}{\mu(\kappa)} \right)^{\lambda_s} \mu(\kappa)^{\mathrm{sign}(\kappa)( \nu-\theta)-1}\mu'(\kappa)    \, d\kappa
\\
& = \mu(t)^{\lambda_s} \int_\tau^t    \mu(\kappa)^{\mathrm{sign}(\kappa)( \nu-\theta)-1-\lambda_s} \, d\mu(\kappa)
\\
&= \frac{\mu(t)^{-\mathrm{sign}(t)(\theta- \nu)}}{-\lambda_s + \mathrm{sign}(t)( \nu-\theta)}
 - \frac{ \mu(t)^{\lambda_s} \mu(\tau)^{-\lambda_s}\mu(\tau)^{-\mathrm{sign}(\tau)(\theta- \nu)}}{-\lambda_s + \mathrm{sign}(\tau)( \nu-\theta)}
\\
&\le \frac{1}{-\lambda_s +   \nu-\theta},
\end{align*}
and for $t\le \tau$,
\begin{align*}
& \int_t^{+\infty}   \left(\frac{\mu(t)}{\mu(\kappa)} \right)^{\lambda_u}
            \mu(\kappa)^{\mathrm{sign}(\kappa)( \omega-\theta)-1}  \mu'(\kappa)    \, d\kappa
\\
&=  \mu(t)^{\lambda_u} \int_t^{+\infty} \mu(\kappa)^{\mathrm{sign}(\kappa)( \omega-\theta)-1-\lambda_u} \, d\mu(\kappa)
\\
&= \frac{\mu(t)^{-\mathrm{sign}(t)(\theta -\omega)}}{\lambda_u-\mathrm{sign}(t)( \omega-\theta)}
\\
& \le \frac{1}{\lambda_u-\theta +\omega}.
\end{align*}
Then, for any given $\tau\in\mathbb{R}$, when $t\ge \tau$, it follows from \eqref{u-dich} and the definition of set $\mathcal{A}_f$ that
\begin{align*}
\|\mathcal{T}x\|_{\mathcal{H}}
\le&~    D \left(\frac{\mu(t)}{\mu(\tau)} \right)^{\lambda_s} \mu(\tau)^{\mathrm{sign}(\tau)\nu}\|x_0\|
+ \int_\tau^t D  \left(\frac{\mu(t)}{\mu(\kappa)} \right)^{\lambda_s} \mu(\kappa)^{\mathrm{sign}(\kappa)\nu} \cdot \varphi(\kappa) \|x(\kappa,\tau,x_0)\| \, d\kappa
\\
& + \int_t^{+\infty} D \left(\frac{\mu(t)}{\mu(\kappa)} \right)^{\lambda_u} \mu(\kappa)^{\mathrm{sign}(\kappa)\omega}
    \cdot \varphi(\kappa) \|x(\kappa,\tau,x_0)\| \, d\kappa
\\
\le&~ D \left(\frac{\mu(t)}{\mu(\tau)} \right)^{\lambda_s} \|x_0\|
+ \int_\tau^t D\delta_f \left(\frac{\mu(t)}{\mu(\kappa)} \right)^{\lambda_s} \mu(\kappa)^{\mathrm{sign}(\kappa)( \nu-\theta)-1}\mu'(\kappa)  \|x\|_{\mathcal{H}} \, d\kappa
\\
&  + \int_t^{+\infty} D\delta_f \left(\frac{\mu(t)}{\mu(\kappa)} \right)^{\lambda_u}
            \mu(\kappa)^{\mathrm{sign}(\kappa)( \omega-\theta)-1}  \mu'(\kappa)  \|x\|_{\mathcal{H}} \, d\kappa
\\
\le&~  D \left(\frac{\mu(t)}{\mu(\tau)} \right)^{\lambda_s}  \mu(\tau)^{\mathrm{sign}(\tau)\nu} \|x_0\|
  + \delta_f D \left( \frac{1}{-\lambda_s +   \nu-\theta}+ \frac{1}{\lambda_u-\theta +\omega}\right) \|x\|_{\mathcal{H}}<+\infty.
\end{align*}
Moreover, for any $x,\tilde{x}\in\mathcal{H}$, we get
\begin{align*}
\|\mathcal{T}x-\mathcal{T}\tilde{x}\|_{\mathcal{H}}
 \le&~ \int_\tau^t D  \left(\frac{\mu(t)}{\mu(\kappa)} \right)^{\lambda_s} \mu(\kappa)^{\mathrm{sign}(\kappa)\nu} \cdot \varphi(\kappa) \|x-\tilde{x}\|_{\mathcal{H}} \, d\kappa
\\
&~+ \int_t^{+\infty} D \left(\frac{\mu(t)}{\mu(\kappa)} \right)^{\lambda_u} \mu(\kappa)^{\mathrm{sign}(\kappa)\omega}
    \cdot \varphi(\kappa) \|x-\tilde{x}\|_{\mathcal{H}} \, d\kappa
\\
\le&~  \delta_f D \left( \frac{1}{-\lambda_s +   \nu-\theta}+ \frac{1}{\lambda_u-\theta +\omega}\right) \|x-\tilde{x}\|_{\mathcal{H}}.
\end{align*}
Thus the operator $\mathcal{T}$ is a contraction on $\mathcal{H}$ since one can take $\delta_f$ sufficiently small such that
$$
\tilde{\delta}_f:=\delta_f D \left( \frac{1}{-\lambda_s +   \nu-\theta}+ \frac{1}{\lambda_u-\theta +\omega}\right) \in (0,1).
$$
It means that $\mathcal{T}$ has a unique fixed point $x_t(\tau,\xi):=x(t,\tau,\xi)$ with $\xi=\pi(\tau)x_0\in \mathcal{X}_\tau^s$.

For any given $\tau\in\mathbb{R}$ and any $\xi,\tilde{\xi}\in \mathcal{X}_\tau^s$, proceeding as above, we get
\begin{align*}
 \|x_t(\tau,\xi)-x_t(\tau,\tilde{\xi})\|
  \le
D \left(\frac{\mu(t)}{\mu(\tau)} \right)^{\lambda_s} \mu(\tau)^{\mathrm{sign}(\tau)\nu} \|\xi-\tilde{\xi}\|
+\tilde{\delta}_f   \, \|x_t(\tau,\xi)-x_t(\tau,\tilde{\xi})\|
\end{align*}
and thus,
\begin{align}\label{Lip-Man}
 \|x_t(\tau,\xi)-x_t(\tau,\tilde{\xi})\|
\le \frac{D}{1-\tilde{\delta}_f} \left(\frac{\mu(t)}{\mu(\tau)} \right)^{\lambda_s}  \mu(\tau)^{\mathrm{sign}(\tau)\nu} \|\xi-\tilde{\xi}\|,
\end{align}
which proves the Lipschitz continuity of $x_t(\tau,\xi)$ in $\xi$.
By the uniqueness of solution of Eq. \eqref{nonlin-eq}, we have that $x_t(\tau,0)=0$.

Define
$$
\text{$\mathcal{G}_s:=\big\{(\tau,y(\tau))\in\mathbb{R}\times \mathbb{R}^n: y(t,\tau,y(\tau))$ is defined in the set $\mathcal{H} \big\}$.  }
$$
Then we obtain from \eqref{LP-eqs} and the contraction of $\mathcal{T}$ that the initial value $y(\tau)$, which compose the set $\mathcal{G}_s$
can be expressed as:
$$
y(\tau)=x_\tau(\tau,\xi)=\xi+\int_\tau^{+\infty} \Psi(\tau,\kappa)(id-\pi(\kappa)) f(\kappa,x(\kappa,\tau,x_0)) \, d\kappa
=: \xi + g_s(\tau,\xi).
$$
By \eqref{Lip-Man}, we have
\begin{align*}
\|g_s(\tau,\xi)-g_s(\tau,\tilde{\xi})\|
\le&~ \int_\tau^{+\infty}
  D \left(\frac{\mu(\tau)}{\mu(\kappa)} \right)^{\lambda_u} \mu(\kappa)^{\mathrm{sign}(\kappa)\omega}
 \varphi(\kappa) \|x_{\kappa}(\tau,\xi)-x_{\kappa}(\tau,\tilde{\xi})\| \, d\kappa
\\
\le&~ \frac{\delta_f D^2}{1-\tilde{\delta}_f} \mu(\tau)^{\mathrm{sign}(\tau)\nu}\|\xi-\tilde{\xi}\|
 \int_\tau^{+\infty} \left(\frac{\mu(\tau)}{\mu(\kappa)} \right)^{\lambda_u} \mu(\kappa)^{\mathrm{sign}(\kappa)(\omega-\theta)-1}\mu'(\kappa) \, d\kappa
\\
\le&~  \frac{\delta_f D^2}{1-\tilde{\delta}_f} \mu(\tau)^{\mathrm{sign}(\tau)\nu}\|\xi-\tilde{\xi}\| \frac{1}{\lambda_u-\theta+\omega}
=: \mathrm{Lip}(g_s) \mu(\tau)^{\mathrm{sign}(\tau)\nu} \|\xi-\tilde{\xi}\|,
\end{align*}
which show that $g_s(\tau,\xi)$ is Lipschitz continuous in $\xi$ for any given $\tau\in\mathbb{R}$.
Furthermore, $g_s(\tau,0)=0$ due to $x_\tau(\tau,0)=0$ and $f(\tau,0)=0$.
Hence, $\mathcal{G}_s:=\{\xi+g_s(\tau,\xi): \tau\in\mathbb{R},\xi\in\mathcal{X}_\tau^s\}$ is a Lipschitz stable manifold.
The invariance of  $\mathcal{G}_s$ is similar to \cite[Theorem 3.1]{ZWN-AM}, so we omit its proof.
Therefore,   the proof of Lemma \ref{thm-stab} is completed.
\end{proof}

\begin{proof}[Proof of Lemma \ref{thm-foli}.]
Given $\tau\in\mathbb{R}$, for all $t\ge \tau$, we consider the set
$$
\mathcal{W}_s(\tau,x)=\big\{y\in\mathbb{R}^n: x(t,\tau,x)-x(t,\tau,y) \in \mathcal{H} \big\}.
$$
Letting $p(t)=x(t,\tau,x)-x(t,\tau,y)$,  by employing a strategy similar to that in \cite[Theorem 2.1]{CHT-JDE}, we can understand that
$y\in\mathcal{W}_s(\tau,x)$ if and only if $p(t)\in\mathcal{H}$ and satisfies the following Lyapunov-Perron formula
\begin{align}\label{LP-foi}
p(t)
=&~ \Psi(t,\tau)\pi(\tau)\eta + \int_\tau^t \Psi(t,\kappa) \pi(\kappa)
               \big\{ f(\kappa,p(\kappa)+x(\kappa,\tau,x))-f(\kappa,x(\kappa,\tau,x)) \big\} \, d\kappa
\notag \\
&~ - \int_t^{+\infty} \Psi(t,\kappa)(id-\pi(\kappa)) \big\{ f(\kappa,p(\kappa)+x(\kappa,\tau,x))-f(\kappa,x(\kappa,\tau,x)) \big\} \, d\kappa,
\end{align}
where $\eta=\pi(\tau)(x-y)$.
In order to derive the existence of the stable foliation, for each $p(t)\in\mathcal{H}$,
we define an operator $\mathcal{J}:\mathcal{H}\to \mathcal{H}$ by
\begin{align*}
(\mathcal{J}p)(t)
=&~ \Psi(t,\tau)\pi(\tau)\eta + \int_\tau^t \Psi(t,\kappa) \pi(\kappa)
               \big\{ f(\kappa,p(\kappa)+x(\kappa,\tau,x))-f(\kappa,x(\kappa,\tau,x)) \big\} \, d\kappa
\\
&~ - \int_t^{+\infty} \Psi(t,\kappa)(id-\pi(\kappa)) \big\{ f(\kappa,p(\kappa)+x(\kappa,\tau,x))-f(\kappa,x(\kappa,\tau,x)) \big\} \, d\kappa.
\end{align*}
Then, similar to the proof of Lemma \ref{thm-stab}, one can   verify that
$$
\|\mathcal{J}p\|_{\mathcal{H}} \le  D\left(\frac{\mu(t)}{\mu(\tau)} \right)^{\lambda_s}  \mu(\tau)^{\mathrm{sign}(\tau)\nu} \|\eta\|
  +\tilde{\delta}_f \|p\|_{\mathcal{H}}<+\infty
$$
and
$$
\|\mathcal{J}p-\mathcal{J}\tilde{p}\|_{\mathcal{H}} \le \tilde{\delta}_f \|p-\tilde{p}\|_{\mathcal{H}}.
$$
Thus, for each $\eta\in \mathcal{X}_\tau^s$ and $x\in\mathbb{R}^n$,
$\mathcal{J}$ is a contraction mapping and has a unique fixed point $p_t(\tau,\eta,x):=p(t,\tau,\eta,x)\in \mathcal{H}$.
Moreover,  for $\eta,\tilde{\eta}\in \mathcal{X}_\tau^s$, we have
\begin{align}\label{pt-est}
\|p_t(\tau,\eta,x)-p_t(\tau,\tilde{\eta},x)\| \le \frac{D}{1-\tilde{\delta}_f}  \left(\frac{\mu(t)}{\mu(\tau)} \right)^{\lambda_s}  \mu(\tau)^{\mathrm{sign}(\tau)\nu} \|\eta-\tilde{\eta}\|.
\end{align}
  Letting $\zeta=\pi(\tau)(p_\tau(\tau,\eta,x)+x)$, then we define $h_s(\tau,\zeta,x):=(id-\pi(\tau))(p_\tau(\tau,\eta,x)+x)$, i.e.,
\begin{align*}
&h_s(\tau,\zeta,x)
\\
&~ =(id-\pi(\tau))x - \int_{\tau}^{+\infty} \Psi(\tau,\kappa)  (id-\pi(\kappa)) \big\{ f(\kappa,p(\kappa)+x(\kappa,\tau,x))-f(\kappa,x(\kappa,\tau,x)) \big\} \, d\kappa.
\end{align*}
Since $\eta=\pi(\tau)p_\tau(\tau,\eta,x)$, we have $\zeta=\eta+\pi(\tau)x$.
For any $\zeta,\tilde{\zeta}\in \mathcal{X}_{\tau}^s$ and $x\in\mathbb{R}^n$, by \eqref{pt-est} we get
\begin{align*}
&\|h_s(\tau,\zeta,x)-h_s(\tau,\tilde{\zeta},x)\|
\\
&~ \le  \int_{\tau}^{+\infty} D \left(\frac{\mu(\tau)}{\mu(\kappa)} \right)^{\lambda_u} \mu(\kappa)^{\mathrm{sign}(\kappa)\omega}
 \varphi(\kappa) \|p_{\kappa}(\tau,\zeta-\pi(\tau)x,x)-p_{\kappa}(\tau,\tilde{\zeta}-\pi(\tau)x,x)\| \, d\kappa
\\
&~ \le  \frac{\delta_f D^2}{(1-\tilde{\delta}_f)(\lambda_u-\theta+\omega)} \mu(\tau)^{\mathrm{sign}(\tau)\nu}\|\zeta-\tilde{\zeta}\|
:= \mathrm{Lip}(h_s) \mu(\tau)^{\mathrm{sign}(\tau)\nu}\|\zeta-\tilde{\zeta}\|,
\end{align*}
which implies that $h_s(\tau,\zeta,x)$ is Lipschitz continuous in $\zeta$ for any given $\tau\in\mathbb{R}$.

Next, we claim that $h_s(\tau,\zeta,x)$ is continuous in $x$. In fact, it is equivalent to proving that $p_t(\tau,\eta,x)$ is continuous in $x$.
By \eqref{u-dich} and the definition of  $\mathcal{A}_f$, we have
\begin{small}
\begin{align*}
&\sup_{t\ge \tau} \|p_t(\tau,\eta,x)-p_t(\tau,\eta,\tilde{x})\|
\\
&~\le \sup_{t\ge \tau}\Bigg\{ \int_\tau^t D  \left(\frac{\mu(t)}{\mu(\kappa)} \right)^{\lambda_s} \mu(\kappa)^{\mathrm{sign}(\kappa)\nu} \varphi(\kappa)
\big(\|p_{\kappa}(\tau,\eta,x)-p_{\kappa}(\tau,\eta,\tilde{x})\| +2\|x(\kappa,\tau,x)-x(\kappa,\tau,\tilde{x})\| \big) \, d\kappa
\\
&\quad  + \int_t^{+\infty} D \left(\frac{\mu(t)}{\mu(\kappa)} \right)^{\lambda_u} \mu(\kappa)^{\mathrm{sign}(\kappa)\omega}
 \varphi(\kappa) \big(\|p_{\kappa}(\tau,\eta,x)-p_{\kappa}(\tau,\eta,\tilde{x})\| +2\|x(\kappa,\tau,x)-x(\kappa,\tau,\tilde{x})\| \big) \, d\kappa \Bigg\}
\\
&~\le \tilde{\delta}_f  \sup_{t\ge \tau} \|p_t(\tau,\eta,x)-p_t(\tau,\eta,\tilde{x})\|
  + 2\tilde{\delta}_f \sup_{t\ge \tau} \|x(t,\tau,x)-x(t,\tau,\tilde{x})\|
\end{align*}
\end{small}
and thus,
\begin{align}\label{pt-con}
\sup_{t\ge \tau} \|p_t(\tau,\eta,x)-p_t(\tau,\eta,\tilde{x})\|  \le \frac{2\tilde{\delta}_f}{1-\tilde{\delta}_f}
           \sup_{t\ge \tau} \|x(t,\tau,x)-x(t,\tau,\tilde{x})\|.
\end{align}
Then in order to prove the continuity of $p_t(\tau,\eta,x)$ in $x$, we choose a $\varrho\in(1,-\lambda_s)$ and let
$\mathcal{H}_{\varrho}$ be the space of all continuous mapping $g:[\tau,+\infty)\to \mathbb{R}^n$ such that
$\|g\|_{\mathcal{H}_{\varrho}}:=\sup_{t\ge \tau} \left( \frac{\mu(t)}{\mu(\tau)}\right)^{\varrho}\|g(t)\|$.
Then, $(\mathcal{H}_{\varrho},\|\cdot\|_{\varrho})$ is a Banach space.
Similarly, we can verify that $\mathcal{J}$ also has a unique fixed point in $\mathcal{H}_{\varrho}$.
In addition, this fixed point is also a solution of Eq. \eqref{LP-foi} in $\mathcal{H}$  because of
$(\mathcal{H}_{\varrho},\|\cdot\|_{\mathcal{H}_{\varrho}})\subset (\mathcal{H},\|\cdot\|_{\mathcal{H}})$.
The uniqueness asserts that  this fixed point is just $p_t(\tau,\eta,x)$.
Therefore, there is a bound $M>0$ such that $\|p(\tau,\eta,x)\|_{\mathcal{H}_{\varrho}}\le M$ and for $t\ge \varsigma \ge \tau$, we see that
$$
\sup_{t\ge \varsigma} \|p_t(\tau,\eta,x)\|
 \le \|p(\tau,\eta,x)\|_{\mathcal{H}_{\varrho}} \sup_{t\ge \varsigma} \left(\frac{\mu(t)}{\mu(\tau)}\right)^{-\varrho}
 \le M  \left(\frac{\mu(\tau)}{\mu(t)}\right)^{\varrho} \to 0,
$$
as $\varsigma\to +\infty$. For any given $\varepsilon>0$, we can choose a $\varsigma$ satisfying
$\mu(\varsigma)/\mu(\tau)>(4M/\varepsilon)^{1/\varrho}$.
Then in the case of $t\ge \varsigma$, we get
$$
\sup_{t\ge \varsigma} \|p_t(\tau,\eta,x)-p_t(\tau,\eta,\tilde{x})\| \le 2M \left(\frac{\mu(\tau)}{\mu(t)}\right)^{\varrho} < \varepsilon/2.
$$
In the case of $\tau\le t <\varsigma$, similarly to \eqref{pt-con}, we have
$$
\sup_{t\in [\tau,\varsigma)}  \|p_t(\tau,\eta,x)-p_t(\tau,\eta,\tilde{x})\|
 \le \frac{2\tilde{\delta}_f}{1-\tilde{\delta}_f}  \sup_{t\in [\tau,\varsigma)} \|x(t,\tau,x)-x(t,\tau,\tilde{x})\| < \varepsilon/2,
$$
as $\|x-\tilde{x}\|\to 0$ due to  the continuous dependence on the initial value and the compactness of $[\tau,\varsigma)$.
Consequently,
\begin{align*}
&\sup_{t\ge \tau} \|p_t(\tau,\eta,x)-p_t(\tau,\eta,\tilde{x})\|
\\
&~ \le
\sup_{t\ge \varsigma} \|p_t(\tau,\eta,x)-p_t(\tau,\eta,\tilde{x})\|+\sup_{t\in [\tau,\varsigma)}  \|p_t(\tau,\eta,x)-p_t(\tau,\eta,\tilde{x})\|  <\varepsilon,
\end{align*}
which gives the continuity of $p_t(\tau,\eta,x)$ in $x$.
Moreover, by construction, we see that
$$
\mathcal{W}_s(\tau,x)=\{x+p_\tau(\tau,\eta,x):\eta\in \mathcal{X}_{\tau}^s\}
=\{\zeta+h_s(\tau,\zeta,x):\zeta\in \mathcal{X}_{\tau}^s \}
$$
is a $C^0$ leaf of the stable foliation.
For any $\hat{x}\in \mathcal{W}_s(\tau,x)$, one can easily verify that $x(t,\tau,\hat{x})\in \mathcal{W}_s(t,x(t,\tau,x))$, and thus,
$x(t,\tau,\mathcal{W}_s(\tau,x))\subset \mathcal{W}_s(t,x(t,\tau,x))$.
This gives the invariance of the stable foliation,  and we complete   the proof of Lemma \ref{thm-foli}.
\end{proof}

\section{Proof of  Proposition  \ref{V1-thm}--\ref{thm-ud}}\label{App-4}

\begin{proof}[Proof of Proposition \ref{V1-thm}.]
In what follows, we use the notation $x=x_s+x_u$ where $x_s\in \mathcal{X}_t^s$ and $x_u\in \mathcal{X}_t^u$.
For each $(t,x)\in \mathbb{R}\times \mathbb{R}^n$, we put
$$
V(t,x)=-V_s(t,x_s)+V_u(t,x_u),
$$
where
$$
V_s(t,x_s)=\sup_{\kappa \ge t}\left\{\|\Psi(\kappa,t)x_s\|\left(\frac{\mu(\kappa)}{\mu(t)}\right)^{-\lambda_s} \right\} \quad  \mathrm{and} \quad
V_u(t,x_u)=\sup_{\kappa \le t}\left\{\|\Psi(\kappa,t)x_u\|\left(\frac{\mu(\kappa)}{\mu(t)}\right)^{-\lambda_u} \right\}.
$$
Then by \eqref{u-dich} we see that
$$
V_s(t,x_s) \le D \mu(t)^{\mathrm{sign}(t)\nu}\|x_s\| \quad  \mathrm{and} \quad V_u(t,x_u) \le  D \mu(t)^{\mathrm{sign}(t)\omega}\|x_u\|,
$$
and thus,
\begin{align*}
|V(t,x)|  \le&~ D \mu(t)^{\mathrm{sign}(t)\nu}\|x_s\| + D \mu(t)^{\mathrm{sign}(t)\omega}\|x_u\|
\\
\le&~ 2D \max\left\{\mu(t)^{\mathrm{sign}(t)\nu},  \mu(t)^{\mathrm{sign}(t)\omega}\right\} \|x\|,
\end{align*}
which implies that \eqref{u-grow} holds.
Moreover, we see that for $t\ge \tau$,
$$
V_s(t,\Psi(t,\tau)x_s) \le \left(\frac{\mu(t)}{\mu(\tau)}\right)^{\lambda_s} V_s(\tau,x_s) \le V_s(\tau,x_s)
$$
and
$$
V_u(t,\Psi(t,\tau)x_u) \ge \left(\frac{\mu(t)}{\mu(\tau)}\right)^{\lambda_u} V_u(\tau,x_u) \ge V_u(\tau,x_u).
$$
Hence,
\begin{align*}
V(t,\Psi(t,\tau)x) =&  -V_s(t,\Psi(t,\tau)x_s) + V_u(t,\Psi(t,\tau)x_u)
\\
\ge&  -V_s(\tau,x_s)+V_u(\tau,x_u)=V(\tau,x),
\end{align*}
which indicates that condition (2) of Definition \ref{Lp-func} holds.

If $x\in \mathcal{E}_\tau^u$, then for $t\ge \tau$ we have
\begin{align*}
V(t,\Psi(t,\tau)x) =& -V_s(t,\Psi(t,\tau)x_s) + V_u(t,\Psi(t,\tau)x_u)
\\
=& -\sup_{\kappa\ge t} \left\{\|\Psi(\kappa,t)\Psi(t,\tau)x_s\|\left(\frac{\mu(\kappa)}{\mu(t)}\right)^{-\lambda_s} \right\}
\\
&+\sup _{\kappa\le t} \left\{\|\Psi(\kappa,t)\Psi(t,\tau)x_u\|\left(\frac{\mu(\kappa)}{\mu(t)}\right)^{-\lambda_u} \right\}
\\
=& - \left(\frac{\mu(t)}{\mu(\tau)}\right)^{\lambda_s}\sup_{\kappa\ge \tau} \left\{\|\Psi(\kappa,\tau)x_s\|\left(\frac{\mu(\kappa)}{\mu(\tau)}\right)^{-\lambda_s} \right\}
\\
&+ \left(\frac{\mu(t)}{\mu(\tau)}\right)^{\lambda_u}\sup_{\kappa\ge \tau} \left\{\|\Psi(\kappa,\tau)x_u\|\left(\frac{\mu(\kappa)}{\mu(\tau)}\right)^{-\lambda_u} \right\}
\\
\ge& \left(\frac{\mu(t)}{\mu(\tau)}\right)^{\lambda_u}
    \left(-\sup_{\kappa\ge \tau} \left\{\|\Psi(\kappa,\tau)x_s\|\left(\frac{\mu(\kappa)}{\mu(\tau)}\right)^{-\lambda_s} \right\}\right.
\\
&+ \left. \sup_{\kappa\ge \tau} \left\{\|\Psi(\kappa,\tau)x_u\|\left(\frac{\mu(\kappa)}{\mu(\tau)}\right)^{-\lambda_u} \right\} \right)
\\
=& \left(\frac{\mu(t)}{\mu(\tau)}\right)^{\lambda_u} V(\tau,x),
\end{align*}
and condition (1) of Definition \ref{str-V} holds with $\alpha=-\lambda_u$.
Similarly, if $x\in \mathcal{E}_\tau^s$, then for $t\ge\tau$ we have
\begin{align*}
|V(t,\Psi(t,\tau)x)| =&~  V_s(t,\Psi(t,\tau)x_s)- V_u(t,\Psi(t,\tau)x_u)
\\
=&  \left(\frac{\mu(t)}{\mu(\tau)}\right)^{\lambda_s} \sup_{\kappa\ge \tau}\left\{\|\Psi(\kappa,\tau)x_s\|\left(\frac{\mu(\kappa)}{\mu(\tau)}\right)^{-\lambda_s} \right\}
\\
& -  \left(\frac{\mu(t)}{\mu(\tau)}\right)^{\lambda_u} \sup_{\kappa\le \tau}\left\{\|\Psi(\kappa,\tau)x_u\|\left(\frac{\mu(\kappa)}{\mu(\tau)}\right)^{-\lambda_u} \right\}
\\
\le &   \left(\frac{\mu(t)}{\mu(\tau)}\right)^{\lambda_s}
       \left(\sup_{\kappa\ge \tau}\left\{\|\Psi(\kappa,\tau)x_s\|\left(\frac{\mu(\kappa)}{\mu(\tau)}\right)^{-\lambda_s} \right\} \right.
\\
& - \left. \sup_{\kappa\le \tau}\left\{\|\Psi(\kappa,\tau)x_u\|\left(\frac{\mu(\kappa)}{\mu(\tau)}\right)^{-\lambda_u} \right\}  \right)
\\
=&   \left(\frac{\mu(t)}{\mu(\tau)}\right)^{\lambda_s} |V(\tau,x)|,
\end{align*}
and condition (2) of Definition \ref{str-V} holds with $\beta=\lambda_s$.

It remains to show that condition (3) of Definition \ref{str-V} is valid. In fact, given a $\tau\in\mathbb{R}$,  if $x\in\mathcal{E}_\tau^u$  then
\begin{align*}
V(\tau,x) \ge&~ V(\tau,x)- V(\tau-1,\Psi(\tau-1,\tau)x)
\\
=& -V_s(\tau,x_s)+V_s(\tau-1,\Psi(\tau-1,\tau)x_s)
\\
&+ V_u(\tau,x_u)- V_u(\tau-1,\Psi(\tau-1,\tau)x_u),
\end{align*}
where
\begin{align*}
 & -V_s(\tau,x_s)+V_s(\tau-1,\Psi(\tau-1,\tau)x_s)
\\
&~ = -\sup_{\kappa\ge \tau} \left\{\|\Psi(\kappa,\tau)x_s\| \left(\frac{\mu(\kappa)}{\mu(\tau)}\right)^{-\lambda_s} \right\}
  +\left(\frac{\mu(\tau-1)}{\mu(\tau)}\right)^{\lambda_s}\sup_{\kappa\ge \tau-1} \left\{\|\Psi(\kappa,\tau)x_s\| \left(\frac{\mu(\kappa)}{\mu(\tau)}\right)^{-\lambda_s} \right\}
\\
&~ \ge \left(\left(\frac{\mu(\tau-1)}{\mu(\tau)}\right)^{\lambda_s}-1\right) \sup_{\kappa\ge \tau}
   \left\{\|\Psi(\kappa,\tau)x_s\| \left(\frac{\mu(\kappa)}{\mu(\tau)}\right)^{-\lambda_s} \right\} \ge \left(\left(\frac{\mu(\tau-1)}{\mu(\tau)}\right)^{\lambda_s}-1\right) \|x_s\|
\end{align*}
and
\begin{align*}
 & V_u(\tau,x_u)- V_u(\tau-1,\Psi(\tau-1,\tau)x_u)
\\
&~ = \sup_{\kappa\le \tau} \left\{\|\Psi(\kappa,\tau)x_u\| \left(\frac{\mu(\kappa)}{\mu(\tau)}\right)^{-\lambda_u} \right\}
 -\left(\frac{\mu(\tau-1)}{\mu(\tau)}\right)^{\lambda_u}\sup_{\kappa\le \tau-1} \left\{\|\Psi(\kappa,\tau)x_u\| \left(\frac{\mu(\kappa)}{\mu(\tau)}\right)^{-\lambda_u} \right\}
\\
&~ \ge \left( 1- \left(\frac{\mu(\tau-1)}{\mu(\tau)}\right)^{\lambda_u}\right) \sup_{\kappa\le \tau} \left\{\|\Psi(\kappa,\tau)x_u\| \left(\frac{\mu(\kappa)}{\mu(\tau)}\right)^{-\lambda_u} \right\}
\ge  \left( 1- \left(\frac{\mu(\tau-1)}{\mu(\tau)}\right)^{\lambda_u}\right) \|x_u\|.
\end{align*}
For any given $\tau\in\mathbb{R}$, letting
$$
C_u:=C_u(\tau) = \min\left\{\left(\frac{\mu(\tau-1)}{\mu(\tau)}\right)^{\lambda_s}-1,  1- \left(\frac{\mu(\tau-1)}{\mu(\tau)}\right)^{\lambda_u}\right\},
$$
we have
$$
V(\tau,x) \ge C_u(\|x_s\|+\|x_u\|) \ge C_u \|x\| \ge C_u  \mu(\tau)^{-\mathrm{sign}(\tau)\epsilon}\|x\|.
$$
On the other hand, if $x\in\mathcal{E}_\tau^s$ then
\begin{align*}
|V(\tau,x)| \ge&  |V(\tau,x)|- |V(\tau+1,\Psi(\tau+1,\tau)x)|
\\
=&  V_s(\tau,x_s)-V_s(\tau+1,\Psi(\tau+1,\tau)x_s)
\\
&- V_u(\tau,x_u)+ V_u(\tau+1,\Psi(\tau+1,\tau)x_u),
\end{align*}
where
\begin{align*}
 &V_s(\tau,x_s)-V_s(\tau+1,\Psi(\tau+1,\tau)x_s)
\\
&=~ \sup_{\kappa\ge \tau} \left\{\|\Psi(\kappa,\tau)x_s\| \left(\frac{\mu(\kappa)}{\mu(\tau)}\right)^{-\lambda_s} \right\}
  -\left(\frac{\mu(\tau+1)}{\mu(\tau)}\right)^{\lambda_s}\sup_{\kappa\ge \tau+1} \left\{\|\Psi(\kappa,\tau)x_s\| \left(\frac{\mu(\kappa)}{\mu(\tau)}\right)^{-\lambda_s} \right\}
\\
&\ge ~ \left(1-\left(\frac{\mu(\tau+1)}{\mu(\tau)}\right)^{\lambda_s}\right)  \sup_{\kappa\ge \tau} \left\{\|\Psi(\kappa,\tau)x_s\| \left(\frac{\mu(\kappa)}{\mu(\tau)}\right)^{-\lambda_s} \right\} \ge \left(1-\left(\frac{\mu(\tau+1)}{\mu(\tau)}\right)^{\lambda_s}\right) \|x_s\|
\end{align*}
and
\begin{align*}
 &- V_u(\tau,x_u)+ V_u(\tau+1,\Psi(\tau+1,\tau)x_u)
\\
&=~ - \sup_{\kappa\le \tau} \left\{\|\Psi(\kappa,\tau)x_u\| \left(\frac{\mu(\kappa)}{\mu(\tau)}\right)^{-\lambda_u} \right\}
 + \left(\frac{\mu(\tau+1)}{\mu(\tau)}\right)^{\lambda_u}\sup_{\kappa\le \tau+1} \left\{\|\Psi(\kappa,\tau)x_u\| \left(\frac{\mu(\kappa)}{\mu(\tau)}\right)^{-\lambda_u} \right\}
\\
&\ge ~ \left(\left(\frac{\mu(\tau+1)}{\mu(\tau)}\right)^{\lambda_u}-1\right)  \sup_{\kappa\le \tau} \left\{\|\Psi(\kappa,\tau)x_u\| \left(\frac{\mu(\kappa)}{\mu(\tau)}\right)^{-\lambda_u} \right\} \ge \left(\left(\frac{\mu(\tau+1)}{\mu(\tau)}\right)^{\lambda_u}-1\right) \|x_u\|.
\end{align*}
For any given $\tau\in\mathbb{R}$, letting
$$
C_s:=C_s(\tau)=\min\left\{1-\left(\frac{\mu(\tau+1)}{\mu(\tau)}\right)^{\lambda_s},\left(\frac{\mu(\tau+1)}{\mu(\tau)}\right)^{\lambda_u}-1 \right\},
$$
we have
$$
V(\tau,x) \ge C_s (\|x_s\|+\|x_u\|) \ge C_s \|x\|\ge C_s \mu(\tau)^{-\mathrm{sign}(\tau)\epsilon}\|x\|.
$$
Consequently, condition (3)  of Definition \ref{str-V} is proved provided that $C\ge \max\{ 1/C_s,1/C_u\}$. This completes the proof.
\end{proof}

\begin{proof}[Proof of Proposition \ref{V-thm2}.]
For each $(t,x)\in\mathbb{R}\times \mathbb{R}^n$, consider the function
\begin{align}\label{QV-def}
\text{$V(t,x)= -\mathrm{sign}\, U(t,x) \sqrt{|U(t,x)|}$,  where $U(t,x)=\langle S(t)x,x\rangle$}
\end{align}
and
\begin{align}\label{S-linOpe}
S(t)=&~ \int_t^{+\infty} \big(\Psi(\kappa,t)\pi(t)\big)^{*} \Psi(\kappa,t)\pi(t)
                \left(\frac{\mu(\kappa)}{\mu(t)}\right)^{-2(\lambda_s+\eta)} \frac{\mu'(\kappa)}{\mu(\kappa)} \, d\kappa
\notag \\
&~ - \int_{-\infty}^t  \big(\Psi(\kappa,t)(id-\pi(t))\big)^{*} \Psi(\kappa,t)(id-\pi(t))
             \left(\frac{\mu(t)}{\mu(\kappa)}\right)^{2(\lambda_u-\eta)} \frac{\mu'(\kappa)}{\mu(\kappa)} \, d\kappa,
\end{align}
for some constant $\eta>0$ such that $\eta<\min\{-\lambda_s,\lambda_u\}$.
Obviously, $S(t)$ is symmetric for each $t\in\mathbb{R}$.
Furthermore, we have
$$
\text{$U(t,x)<0$ for   $x\in \mathcal{X}_t^u\backslash \{0\}$ and $U(t,x)>0$ for   $x\in \mathcal{X}_t^s\backslash \{0\}$.  }
$$
Thus, $S(t)$ is invertible due to $\mathcal{X}_t^u \oplus \mathcal{X}_t^s = \mathbb{R}^n$ for any $t\in\mathbb{R}$.

In order to obtain condition (2) of Definition \ref{Lp-func}, we see that for every $t\ge \tau$,
\begin{align*}
 U(t,\Psi(t,\tau)x)=& \int_t^{+\infty} \|\Psi(\kappa,\tau)\pi(\tau)x\|^2  \left(\frac{\mu(\kappa)}{\mu(t)}\right)^{-2(\lambda_s+\eta)} \frac{\mu'(\kappa)}{\mu(\kappa)} \, d\kappa
\\
&- \int_{-\infty}^t \|\Psi(\kappa,\tau)(id-\pi(\tau))x\|^2  \left(\frac{\mu(t)}{\mu(\kappa)}\right)^{2(\lambda_u-\eta)} \frac{\mu'(\kappa)}{\mu(\kappa)} \, d\kappa
\\
\le& \int_{\tau}^{+\infty} \|\Psi(\kappa,\tau)\pi(\tau)x\|^2  \left(\frac{\mu(\kappa)}{\mu(\tau)}\right)^{-2(\lambda_s+\eta)} \frac{\mu'(\kappa)}{\mu(\kappa)} \, d\kappa
\\
&- \int_{-\infty}^{\tau} \|\Psi(\kappa,\tau)(id-\pi(\tau))x\|^2  \left(\frac{\mu(\tau)}{\mu(\kappa)}\right)^{2(\lambda_u-\eta)} \frac{\mu'(\kappa)}{\mu(\kappa)} \, d\kappa
\\
=&~ U(\tau,x),
\end{align*}
since
\begin{align*}
& \int_t^{+\infty} \|\Psi(\kappa,\tau)\pi(\tau)x\|^2  \left(\frac{\mu(\kappa)}{\mu(t)}\right)^{-2(\lambda_s+\eta)} \frac{\mu'(\kappa)}{\mu(\kappa)} \, d\kappa
\\
&\le \left(\frac{\mu(t)}{\mu(\tau)}\right)^{2(\lambda_s+\eta)} \int_{\tau}^{+\infty}
  \|\Psi(\kappa,\tau)\pi(\tau)x\|^2  \left(\frac{\mu(\kappa)}{\mu(\tau)}\right)^{-2(\lambda_s+\eta)} \frac{\mu'(\kappa)}{\mu(\kappa)} \, d\kappa
\\
&\le  \int_{\tau}^{+\infty}
  \|\Psi(\kappa,\tau)\pi(\tau)x\|^2  \left(\frac{\mu(\kappa)}{\mu(\tau)}\right)^{-2(\lambda_s+\eta)} \frac{\mu'(\kappa)}{\mu(\kappa)} \, d\kappa, \quad t\ge \tau,
\end{align*}
and
\begin{align*}
& \int_{-\infty}^t \|\Psi(\kappa,\tau)(id-\pi(\tau))x\|^2  \left(\frac{\mu(t)}{\mu(\kappa)}\right)^{2(\lambda_u-\eta)} \frac{\mu'(\kappa)}{\mu(\kappa)} \, d\kappa
\\
& \ge  \left(\frac{\mu(t)}{\mu(\tau)}\right)^{2(\lambda_u-\eta)}  \int_{-\infty}^\tau
\|\Psi(\kappa,\tau)(id-\pi(\tau))x\|^2  \left(\frac{\mu(\tau)}{\mu(\kappa)}\right)^{2(\lambda_u-\eta)} \frac{\mu'(\kappa)}{\mu(\kappa)} \, d\kappa
\\
&\ge  \int_{-\infty}^\tau
\|\Psi(\kappa,\tau)(id-\pi(\tau))x\|^2  \left(\frac{\mu(\tau)}{\mu(\kappa)}\right)^{2(\lambda_u-\eta)} \frac{\mu'(\kappa)}{\mu(\kappa)} \, d\kappa, \quad t\ge \tau.
\end{align*}
Therefore by \eqref{QV-def}, the required result is proved.

Next, we show that $V(t,x)$ given by \eqref{QV-def} is a strict Lyapunov function.
It follows from \eqref{u-dich} that
\begin{align}\label{UUU-1}
|U(t,x)| \le& \int_t^{+\infty} \|\Psi(\kappa,t)\pi(t)x\|^2  \left(\frac{\mu(\kappa)}{\mu(t)}\right)^{-2(\lambda_s+\eta)} \frac{\mu'(\kappa)}{\mu(\kappa)} \, d\kappa
\notag \\
&+ \int_{-\infty}^t \|\Psi(\kappa,t)(id-\pi(t))x\|^2  \left(\frac{\mu(t)}{\mu(\kappa)}\right)^{2(\lambda_u-\eta)} \frac{\mu'(\kappa)}{\mu(\kappa)} \, d\kappa
\notag \\
\le&~ D^2 \mu(t)^{\mathrm{sign}(t)\nu}\|x\|^2 \int_t^{+\infty} \left(\frac{\mu(\kappa)}{\mu(t)}\right)^{-2\eta} \frac{\mu'(\kappa)}{\mu(\kappa)} \, d\kappa
\notag \\
&+ D^2 \mu(t)^{\mathrm{sign}(t)\omega}\|x\|^2  \int_{-\infty}^t  \left(\frac{\mu(t)}{\mu(\kappa)}\right)^{-2\eta} \frac{\mu'(\kappa)}{\mu(\kappa)} \, d\kappa
\notag \\
\le&~ \frac{D^2}{\eta} \max\left\{\mu(t)^{\mathrm{sign}(t)\nu},  \mu(t)^{\mathrm{sign}(t)\omega}\right\}\|x\|^2,
\end{align}
which implies that \eqref{u-grow} holds.

Now assume that $x\in\mathcal{E}_\tau^s$. Then for each $t\ge \tau$, we see that
\begin{align*}
U(t,\Psi(t,\tau)x)=& \int_t^{+\infty} \|\Psi(\kappa,\tau)x_s\|^2  \left(\frac{\mu(\kappa)}{\mu(t)}\right)^{-2(\lambda_s+\eta)} \frac{\mu'(\kappa)}{\mu(\kappa)} \, d\kappa
\\
&- \int_{-\infty}^t \|\Psi(\kappa,\tau)x_u\|^2  \left(\frac{\mu(t)}{\mu(\kappa)}\right)^{2(\lambda_u-\eta)} \frac{\mu'(\kappa)}{\mu(\kappa)} \, d\kappa
\\
\le&   \left(\frac{\mu(t)}{\mu(\tau)}\right)^{2(\lambda_s+\eta)} \int_{\tau}^{+\infty}
  \|\Psi(\kappa,\tau)x_s\|^2  \left(\frac{\mu(\kappa)}{\mu(\tau)}\right)^{-2(\lambda_s+\eta)} \frac{\mu'(\kappa)}{\mu(\kappa)} \, d\kappa
\\
&- \left(\frac{\mu(t)}{\mu(\tau)}\right)^{2(\lambda_u-\eta)}  \int_{-\infty}^\tau
\|\Psi(\kappa,\tau)x_u\|^2  \left(\frac{\mu(\tau)}{\mu(\kappa)}\right)^{2(\lambda_u-\eta)} \frac{\mu'(\kappa)}{\mu(\kappa)} \, d\kappa
\\
\le&  \left(\frac{\mu(t)}{\mu(\tau)}\right)^{2(\lambda_s+\eta)} \left(\int_{\tau}^{+\infty}
  \|\Psi(\kappa,\tau)x_s\|^2  \left(\frac{\mu(\kappa)}{\mu(\tau)}\right)^{-2(\lambda_s+\eta)} \frac{\mu'(\kappa)}{\mu(\kappa)} \, d\kappa \right.
\\
&- \left.  \int_{-\infty}^\tau
\|\Psi(\kappa,\tau)x_u\|^2  \left(\frac{\mu(\tau)}{\mu(\kappa)}\right)^{2(\lambda_u-\eta)} \frac{\mu'(\kappa)}{\mu(\kappa)} \, d\kappa \right)
\\
=& \left(\frac{\mu(t)}{\mu(\tau)}\right)^{2(\lambda_s+\eta)} U(\tau,x),
\end{align*}
because of $\lambda_u-\eta>\lambda_s+\eta$, and thus condition (2) of  Definition \ref{str-V} holds with $\eta\ge \lambda_s+\eta$.
 Similarly in the case of $x\in\mathcal{E}_\tau^u$, for each $t\ge \tau$ we see that
\begin{align*}
|U(t,\Psi(t,\tau)x)| =& -\int_t^{+\infty} \|\Psi(\kappa,\tau)x_s\|^2  \left(\frac{\mu(\kappa)}{\mu(t)}\right)^{-2(\lambda_s+\eta)} \frac{\mu'(\kappa)}{\mu(\kappa)} \, d\kappa
\\
&+ \int_{-\infty}^t \|\Psi(\kappa,\tau)x_u\|^2  \left(\frac{\mu(t)}{\mu(\kappa)}\right)^{2(\lambda_u-\eta)} \frac{\mu'(\kappa)}{\mu(\kappa)} \, d\kappa
\\
\ge& \left(\frac{\mu(t)}{\mu(\tau)}\right)^{2(\lambda_u-\eta)} \left(- \int_{\tau}^{+\infty}
  \|\Psi(\kappa,\tau)x_s\|^2  \left(\frac{\mu(\kappa)}{\mu(\tau)}\right)^{-2(\lambda_s+\eta)} \frac{\mu'(\kappa)}{\mu(\kappa)} \, d\kappa  \right.
\\
&+ \left. \int_{-\infty}^\tau
\|\Psi(\kappa,\tau)x_u\|^2  \left(\frac{\mu(\tau)}{\mu(\kappa)}\right)^{2(\lambda_u-\eta)} \frac{\mu'(\kappa)}{\mu(\kappa)} \, d\kappa \right)
\\
=& \left(\frac{\mu(t)}{\mu(\tau)}\right)^{2(\lambda_u-\eta)} |U(\tau,x)|,
\end{align*}
and thus condition (1) of  Definition \ref{str-V} holds with $\alpha \ge -\lambda_u+\eta$.

Finally, we claim that condition (3) of  Definition \ref{str-V} is valid.
If $x\in\mathcal{E}_\tau^s$ then
\begin{align}\label{VVV-1}
U(\tau,x) \ge &~ U(\tau,x) - U(\tau-1,\Psi(\tau-1,\tau)x)
\notag \\
=& -U_s(\tau,x_s)+U_s(\tau-1,\Psi(\tau-1,\tau)x_s)
\notag \\
&+ U_u(\tau,x_u) - U_u(\tau-1,\Psi(\tau-1,\tau)x_u),
\end{align}
where
\begin{align*}
&-U_s(\tau,x_s)+U_s(\tau-1,\Psi(\tau-1,\tau)x_s)
\\
&= - \int_{\tau}^{+\infty}
  \|\Psi(\kappa,\tau)x_s\|^2  \left(\frac{\mu(\kappa)}{\mu(\tau)}\right)^{-2(\lambda_s+\eta)} \frac{\mu'(\kappa)}{\mu(\kappa)} \, d\kappa
\\
&\quad + \int_{\tau-1}^{+\infty}
  \|\Psi(\kappa,\tau)x_s\|^2  \left(\frac{\mu(\kappa)}{\mu(\tau-1)}\right)^{-2(\lambda_s+\eta)} \frac{\mu'(\kappa)}{\mu(\kappa)} \, d\kappa
\end{align*}
and
\begin{align*}
& U_u(\tau,x_u) - U_u(\tau-1,\Psi(\tau-1,\tau)x_u)
\\
&= \int_{-\infty}^\tau \|\Psi(\kappa,\tau)x_u\|^2  \left(\frac{\mu(\tau)}{\mu(\kappa)}\right)^{2(\lambda_u-\eta)} \frac{\mu'(\kappa)}{\mu(\kappa)} \, d\kappa
\\
&\quad - \int_{-\infty}^{\tau-1} \|\Psi(\kappa,\tau)x_u\|^2  \left(\frac{\mu(\tau-1)}{\mu(\kappa)}\right)^{2(\lambda_u-\eta)} \frac{\mu'(\kappa)}{\mu(\kappa)} \, d\kappa.
\end{align*}
In view of \eqref{bou-u}, we obtain that
\begin{align}\label{VVV-2}
& -U_s(\tau,x_s)+U_s(\tau-1,\Psi(\tau-1,\tau)x_s)
\notag \\
&=  - \int_{\tau}^{+\infty}
  \|\Psi(\kappa,\tau)x_s\|^2  \left(\frac{\mu(\kappa)}{\mu(\tau)}\right)^{-2(\lambda_s+\eta)} \frac{\mu'(\kappa)}{\mu(\kappa)} \, d\kappa
\notag \\
&\quad +  \left(\frac{\mu(\tau)}{\mu(\tau-1)}\right)^{-2(\lambda_s+\eta)}    \int_{\tau-1}^{+\infty}
  \|\Psi(\kappa,\tau)x_s\|^2  \left(\frac{\mu(\kappa)}{\mu(\tau)}\right)^{-2(\lambda_s+\eta)} \frac{\mu'(\kappa)}{\mu(\kappa)} \, d\kappa
\notag \\
&\ge \left(\left(\frac{\mu(\tau)}{\mu(\tau-1)}\right)^{-2(\lambda_s+\eta)} -1 \right)   \int_{\tau}^{+\infty}
  \|\Psi(\kappa,\tau)x_s\|^2  \left(\frac{\mu(\kappa)}{\mu(\tau)}\right)^{-2(\lambda_s+\eta)} \frac{\mu'(\kappa)}{\mu(\kappa)} \, d\kappa
\notag \\
&\ge \left(\left(\frac{\mu(\tau)}{\mu(\tau-1)}\right)^{-2(\lambda_s+\eta)} -1 \right)   \int_{\tau}^{\tau+c}
  \|\Psi(\kappa,\tau)x_s\|^2  \left(\frac{\mu(\kappa)}{\mu(\tau)}\right)^{-2(\lambda_s+\eta)} \frac{\mu'(\kappa)}{\mu(\kappa)} \, d\kappa
\notag \\
&\ge \frac{1}{\tilde{D}^2}\mu(|\tau|)^{-2\tilde{\lambda}}\left(\left(\frac{\mu(\tau)}{\mu(\tau-1)}\right)^{-2(\lambda_s+\eta)} -1 \right)  \|x_s\|^2
   \int_{\tau}^{\tau+c}\left(\frac{\mu(\kappa)}{\mu(\tau)}\right)^{-2(\lambda_s+\eta)} \frac{\mu'(\kappa)}{\mu(\kappa)} \, d\kappa
\notag \\
&\ge \frac{1}{\tilde{D}^2}\mu(|\tau|)^{-2\tilde{\lambda}}\left(\left(\frac{\mu(\tau)}{\mu(\tau-1)}\right)^{-2(\lambda_s+\eta)} -1 \right)
  \log\frac{\mu(\tau+c)}{\mu(\tau)} \|x_s\|^2,
\end{align}
here the   last inequality holds  due to
$$
\int_{\tau}^{\tau+c}\left(\frac{\mu(\kappa)}{\mu(\tau)}\right)^{-2(\lambda_s+\eta)} \frac{\mu'(\kappa)}{\mu(\kappa)} \, d\kappa
\ge \int_{\tau}^{\tau+c}  \frac{\mu'(\kappa)}{\mu(\kappa)} \, d\kappa =  \log\frac{\mu(\tau+c)}{\mu(\tau)}>0.
$$
Similar to \eqref{VVV-2}, we obtain from \eqref{bou-u} that
\begin{align}\label{VVV-3}
& U_u(\tau,x_u) - U_u(\tau-1,\Psi(\tau-1,\tau)x_u)
\notag \\
&\ge \left(1-\left(\frac{\mu(\tau-1)}{\mu(\tau)}\right)^{2(\lambda_u-\eta)} \right)
\int_{-\infty}^{\tau-1} \|\Psi(\kappa,\tau)x_u\|^2  \left(\frac{\mu(\tau)}{\mu(\kappa)}\right)^{2(\lambda_u-\eta)} \frac{\mu'(\kappa)}{\mu(\kappa)} \, d\kappa
\notag \\
&\ge  \left(1-\left(\frac{\mu(\tau-1)}{\mu(\tau)}\right)^{2(\lambda_u-\eta)} \right)
\int_{\tau-1-c}^{\tau-1} \|\Psi(\kappa,\tau)x_u\|^2  \left(\frac{\mu(\tau)}{\mu(\kappa)}\right)^{2(\lambda_u-\eta)} \frac{\mu'(\kappa)}{\mu(\kappa)} \, d\kappa
\notag \\
&\ge \frac{1}{\tilde{D}^2}\mu(|\tau|)^{-2\tilde{\lambda}} \left(1-\left(\frac{\mu(\tau-1)}{\mu(\tau)}\right)^{2(\lambda_u-\eta)} \right)
 \log\frac{\mu(\tau-1)}{\mu(\tau-1-c)} \|x_u\|^2,
\end{align}
where
$$
\int_{\tau-1-c}^{\tau-1}   \left(\frac{\mu(\tau)}{\mu(\kappa)}\right)^{2(\lambda_u-\eta)} \frac{\mu'(\kappa)}{\mu(\kappa)} \, d\kappa
\ge \int_{\tau-1-c}^{\tau-1} \frac{\mu'(\kappa)}{\mu(\kappa)} \, d\kappa= \log\frac{\mu(\tau-1)}{\mu(\tau-1-c)}>0.
$$
For any given $\tau\in\mathbb{R}$, let
\begin{align*}
C_s:=C_s(\tau)=\frac{1}{\tilde{D}^2}&  \min\left\{\left(\left(\frac{\mu(\tau)}{\mu(\tau-1)}\right)^{-2(\lambda_s+\eta)} -1 \right)
  \log\frac{\mu(\tau+c)}{\mu(\tau)}, \right.
\\
&\qquad\quad  \left.  \left(1-\left(\frac{\mu(\tau-1)}{\mu(\tau)}\right)^{2(\lambda_u-\eta)} \right)
 \log\frac{\mu(\tau-1)}{\mu(\tau-1-c)}\right\}.
\end{align*}
Then by \eqref{VVV-1}, \eqref{VVV-2} and \eqref{VVV-3}, we conclude that
\begin{align}\label{VVV-4}
|V(\tau,x)|^2=&~ U(\tau,x) \ge C_s \mu(|\tau|)^{-2\tilde{\lambda}} \left(\|x_s\|^2+\|x_u\|^2\right)
\notag \\
\ge&~ C_s \mu(|\tau|)^{-2\tilde{\lambda}} \left(\frac{\|x_s\|+\|x_u\|}{2}\right)^2 \ge \frac{C_s}{4}\mu(|\tau|)^{-2\tilde{\lambda}}\|x\|^2.
\end{align}
On the other hand, if $x\in\mathcal{E}_\tau^u$ then
\begin{align}\label{VVV-5}
|U(\tau,x)| \ge&~ |U(\tau,x)|-|U(\tau+1,\Psi(\tau+1,\tau)x)|
\notag \\
=&~ U_s(\tau,x_s)- U_s(\tau+1,\Psi(\tau+1,\tau)x_s)
\notag \\
&~ - U_u(\tau,x_u) - U_u(\tau+1,\Psi(\tau+1,\tau)x_u),
\end{align}
where
\begin{align*}
&  U_s(\tau,x_s)- U_s(\tau+1,\Psi(\tau+1,\tau)x_s)
\\
&=   \int_{\tau}^{+\infty}
  \|\Psi(\kappa,\tau)x_s\|^2  \left(\frac{\mu(\kappa)}{\mu(\tau)}\right)^{-2(\lambda_s+\eta)} \frac{\mu'(\kappa)}{\mu(\kappa)} \, d\kappa
\\
&\quad - \int_{\tau+1}^{+\infty}
  \|\Psi(\kappa,\tau)x_s\|^2  \left(\frac{\mu(\kappa)}{\mu(\tau+1)}\right)^{-2(\lambda_s+\eta)} \frac{\mu'(\kappa)}{\mu(\kappa)} \, d\kappa
\end{align*}
and
\begin{align*}
& - U_u(\tau,x_u) - U_u(\tau+1,\Psi(\tau+1,\tau)x_u)
\\
&= -\int_{-\infty}^\tau \|\Psi(\kappa,\tau)x_u\|^2  \left(\frac{\mu(\tau)}{\mu(\kappa)}\right)^{2(\lambda_u-\eta)} \frac{\mu'(\kappa)}{\mu(\kappa)} \, d\kappa
\\
&\quad + \int_{-\infty}^{\tau+1} \|\Psi(\kappa,\tau)x_u\|^2  \left(\frac{\mu(\tau+1)}{\mu(\kappa)}\right)^{2(\lambda_u-\eta)} \frac{\mu'(\kappa)}{\mu(\kappa)} \, d\kappa.
\end{align*}
In view of \eqref{bou-u}, we deduce that
\begin{align}\label{VVV-6}
& U_s(\tau,x_s)- U_s(\tau+1,\Psi(\tau+1,\tau)x_s)
\notag \\
&= \int_{\tau}^{+\infty}
  \|\Psi(\kappa,\tau)x_s\|^2  \left(\frac{\mu(\kappa)}{\mu(\tau)}\right)^{-2(\lambda_s+\eta)} \frac{\mu'(\kappa)}{\mu(\kappa)} \, d\kappa
\notag \\
&\quad -  \left(\frac{\mu(\tau+1)}{\mu(\tau)}\right)^{2(\lambda_s+\eta)}      \int_{\tau+1}^{+\infty}
  \|\Psi(\kappa,\tau)x_s\|^2  \left(\frac{\mu(\kappa)}{\mu(\tau)}\right)^{-2(\lambda_s+\eta)} \frac{\mu'(\kappa)}{\mu(\kappa)} \, d\kappa
\notag \\
&\ge \left(1- \left(\frac{\mu(\tau+1)}{\mu(\tau)}\right)^{2(\lambda_s+\eta)} \right) \int_{\tau+1}^{+\infty}
  \|\Psi(\kappa,\tau)x_s\|^2  \left(\frac{\mu(\kappa)}{\mu(\tau)}\right)^{-2(\lambda_s+\eta)} \frac{\mu'(\kappa)}{\mu(\kappa)} \, d\kappa
\notag \\
&\ge \left(1- \left(\frac{\mu(\tau+1)}{\mu(\tau)}\right)^{2(\lambda_s+\eta)} \right) \int_{\tau+1}^{\tau+1+c}
  \|\Psi(\kappa,\tau)x_s\|^2  \left(\frac{\mu(\kappa)}{\mu(\tau)}\right)^{-2(\lambda_s+\eta)} \frac{\mu'(\kappa)}{\mu(\kappa)} \, d\kappa
\notag \\
&\ge~ \frac{1}{\tilde{D}^2} \mu(|\tau|)^{-2\tilde{\lambda}}  \left(1- \left(\frac{\mu(\tau+1)}{\mu(\tau)}\right)^{2(\lambda_s+\eta)} \right) \|x_s\|^2
      \int_{\tau+1}^{\tau+1+c}  \left(\frac{\mu(\kappa)}{\mu(\tau)}\right)^{-2(\lambda_s+\eta)} \frac{\mu'(\kappa)}{\mu(\kappa)} \, d\kappa
\notag \\
&\ge~  \frac{1}{\tilde{D}^2} \mu(|\tau|)^{-2\tilde{\lambda}}  \left(1- \left(\frac{\mu(\tau+1)}{\mu(\tau)}\right)^{2(\lambda_s+\eta)} \right)
        \log\frac{\mu(\tau+1+c)}{\mu(\tau+1)} \|x_s\|^2
\end{align}
and
\begin{align}\label{VVV-7}
& - U_u(\tau,x_u) - U_u(\tau+1,\Psi(\tau+1,\tau)x_u)
\notag \\
&\ge \left(\left(\frac{\mu(\tau+1)}{\mu(\tau)}\right)^{2(\lambda_u-\eta)}-1 \right)
       \int_{-\infty}^\tau \|\Psi(\kappa,\tau)x_u\|^2  \left(\frac{\mu(\tau)}{\mu(\kappa)}\right)^{2(\lambda_u-\eta)} \frac{\mu'(\kappa)}{\mu(\kappa)} \, d\kappa
\notag \\
&\ge~  \frac{1}{\tilde{D}^2} \mu(|\tau|)^{-2\tilde{\lambda}}  \left(\left(\frac{\mu(\tau+1)}{\mu(\tau)}\right)^{2(\lambda_u-\eta)}-1 \right) \|x_u\|^2
         \int_{\tau-c}^\tau \left(\frac{\mu(\tau)}{\mu(\kappa)}\right)^{2(\lambda_u-\eta)} \frac{\mu'(\kappa)}{\mu(\kappa)} \, d\kappa
\notag \\
&\ge~  \frac{1}{\tilde{D}^2} \mu(|\tau|)^{-2\tilde{\lambda}}  \left(\left(\frac{\mu(\tau+1)}{\mu(\tau)}\right)^{2(\lambda_u-\eta)}-1 \right)
  \log\frac{\mu(\tau)}{\mu(\tau-c)} \|x_u\|^2.
\end{align}
From \eqref{VVV-5}, \eqref{VVV-6} and \eqref{VVV-7}, we conclude that
\begin{align}\label{VVV-8}
|V(\tau,x)|^2=   |U(\tau,x)| \ge \frac{C_u}{4} \mu(|\tau|)^{-2\tilde{\lambda}}\|x\|^2,
\end{align}
where
\begin{align*}
C_u:=C_u(\tau)= \frac{1}{\tilde{D}^2} &  \min\left\{ \left(1- \left(\frac{\mu(\tau+1)}{\mu(\tau)}\right)^{2(\lambda_s+\eta)} \right)
        \log\frac{\mu(\tau+1+c)}{\mu(\tau+1)}, \right.
\\
&\qquad\quad  \left.   \left(\left(\frac{\mu(\tau+1)}{\mu(\tau)}\right)^{2(\lambda_u-\eta)}-1 \right)
  \log\frac{\mu(\tau)}{\mu(\tau-c)}\right\}.
\end{align*}
By \eqref{VVV-4} and \eqref{VVV-8}, we assert that condition (3) of Definition \ref{str-V} holds since one can choose an appropriate $\epsilon$ so that
$\mu(\tau)^{-\mathrm{sign}(\tau)\epsilon}=\mu(|\tau|)^{-\tilde{\lambda}}$ for each $\tau\in\mathbb{R}$.
Consequently, $V(\tau,x)$ given in \eqref{QV-def} is a quadratic strict Lyapunov function. This completes the proof.
\end{proof}

\begin{proof}[Proof of Proposition \ref{Pro18}.]
Noting that $S(t)$ is symmetric, by \eqref{UUU-1} we have
$$
\|S(t)\|=\sup_{x\ne 0} \frac{|U(t,x)|}{\|x\|^2}\le  \frac{D^2}{\eta} \max\left\{\mu(t)^{\mathrm{sign}(t)\nu},  \mu(t)^{\mathrm{sign}(t)\omega}\right\}.
$$
Recall that
\begin{align*}
S(t)
=&  \int_t^{+\infty} \big(\pi(\kappa)\Psi(\kappa,t)\big)^{*} \pi(\kappa)\Psi(\kappa,t)
                \left(\frac{\mu(\kappa)}{\mu(t)}\right)^{-2(\lambda_s+\eta)} \frac{\mu'(\kappa)}{\mu(\kappa)} \, d\kappa
\\
&  - \int_{-\infty}^t  \big((id-\pi(\kappa))\Psi(\kappa,t)\big)^{*} (id-\pi(\kappa))\Psi(\kappa,t)
             \left(\frac{\mu(t)}{\mu(\kappa)}\right)^{2(\lambda_u-\eta)} \frac{\mu'(\kappa)}{\mu(\kappa)} \, d\kappa.
\end{align*}
Since
$$
\text{$\frac{\partial}{\partial t}  \Psi(\kappa,t)=- \Psi(\kappa,t)A(t)$ \quad  and \quad
    $\frac{\partial}{\partial t}  \Psi(\kappa,t)^*=- A(t)^*\Psi(\kappa,t)^*$,}
$$
we obtain that $S(t)$ is $C^1$ in $t$ with derivative
\begin{align*}
S'(t)=& -\frac{\mu'(t)}{\mu(t)}-  \int_t^{+\infty}A(t)^* \big(\pi(\kappa)\Psi(\kappa,t)\big)^{*} \pi(\kappa)\Psi(\kappa,t)
                \left(\frac{\mu(\kappa)}{\mu(t)}\right)^{-2(\lambda_s+\eta)} \frac{\mu'(\kappa)}{\mu(\kappa)} \, d\kappa
\\
&- \int_t^{+\infty} \big(\pi(\kappa)\Psi(\kappa,t)\big)^{*} \pi(\kappa)\Psi(\kappa,t) A(t)
                \left(\frac{\mu(\kappa)}{\mu(t)}\right)^{-2(\lambda_s+\eta)} \frac{\mu'(\kappa)}{\mu(\kappa)} \, d\kappa
\\
& + 2(\lambda_s+\eta) \frac{\mu'(t)}{\mu(t)} \int_t^{+\infty} \big(\pi(\kappa)\Psi(\kappa,t)\big)^{*} \pi(\kappa)\Psi(\kappa,t)
                \left(\frac{\mu(\kappa)}{\mu(t)}\right)^{-2(\lambda_s+\eta)} \frac{\mu'(\kappa)}{\mu(\kappa)} \, d\kappa
\\
&-\frac{\mu'(t)}{\mu(t)}+ \int_{-\infty}^t  A(t)^* \big((id-\pi(\kappa))\Psi(\kappa,t)\big)^{*} (id-\pi(\kappa))\Psi(\kappa,t)
             \left(\frac{\mu(t)}{\mu(\kappa)}\right)^{2(\lambda_u-\eta)} \frac{\mu'(\kappa)}{\mu(\kappa)} \, d\kappa
\\
&+ \int_{-\infty}^t  \big((id-\pi(\kappa))\Psi(\kappa,t)\big)^{*} (id-\pi(\kappa))\Psi(\kappa,t)A(t)
             \left(\frac{\mu(t)}{\mu(\kappa)}\right)^{2(\lambda_u-\eta)} \frac{\mu'(\kappa)}{\mu(\kappa)} \, d\kappa
\\
&- 2(\lambda_u-\eta) \frac{\mu'(t)}{\mu(t)}  \int_{-\infty}^t  \big((id-\pi(\kappa))\Psi(\kappa,t)\big)^{*} (id-\pi(\kappa))\Psi(\kappa,t)
             \left(\frac{\mu(t)}{\mu(\kappa)}\right)^{2(\lambda_u-\eta)} \frac{\mu'(\kappa)}{\mu(\kappa)} \, d\kappa
\\
=& -\frac{2\mu'(t)}{\mu(t)} -A(t)^*S(t)-S(t)A(t)
\\
&+ 2(\lambda_s+\eta) \frac{\mu'(t)}{\mu(t)} \int_t^{+\infty} \big(\pi(\kappa)\Psi(\kappa,t)\big)^{*} \pi(\kappa)\Psi(\kappa,t)
                \left(\frac{\mu(\kappa)}{\mu(t)}\right)^{-2(\lambda_s+\eta)} \frac{\mu'(\kappa)}{\mu(\kappa)} \, d\kappa
\\
&- 2(\lambda_u-\eta) \frac{\mu'(t)}{\mu(t)}  \int_{-\infty}^t  \big((id-\pi(\kappa))\Psi(\kappa,t)\big)^{*} (id-\pi(\kappa))\Psi(\kappa,t)
             \left(\frac{\mu(t)}{\mu(\kappa)}\right)^{2(\lambda_u-\eta)} \frac{\mu'(\kappa)}{\mu(\kappa)} \, d\kappa
\\
\le& -\frac{2\mu'(t)}{\mu(t)} -A(t)^*S(t)-S(t)A(t) + 2 (\lambda_u-\eta) \frac{\mu'(t)}{\mu(t)}S(t).
\end{align*}
Therefore, we have
$$
S'(t)+A(t)^*S(t)+S(t)A(t) \le  -\frac{2\mu'(t)}{\mu(t)} (id+ \lambda_u S(t)),
$$
which proves \eqref{S-b}.

From \eqref{VVV-4}, one knows that if $x\in\mathcal{E}_t^s$, then
$$
U(t,x)=\langle S(t)x,x \rangle \ge \frac{C_s}{4} \mu(|t|)^{-2\tilde{\lambda}} \langle x,x \rangle,
$$
which implies that $S(t)\ge  \frac{C_s}{4} \mu(|t|)^{-2\tilde{\lambda}}$ on $\mathcal{E}_t^s$ by Remark \ref{Rem-4}. Similarly, by \eqref{VVV-8}, if $x\in\mathcal{E}_t^u$ then
$$
U(t,x)=-\langle S(t)x,x \rangle \ge \frac{C_u}{4} \mu(|t|)^{-2\tilde{\lambda}} \langle x,x \rangle,
$$
and thus $-S(t) \ge  \frac{C_u}{4} \mu(|t|)^{-2\tilde{\lambda}}$ on $\mathcal{E}_t^u$. This completes the proof.
\end{proof}

\begin{proof}[Proof of Proposition \ref{thm-str}.]
It follows from Definition \ref{str-V}-(3)  that $\mathcal{G}_\tau^u\subset \mathcal{C}^u(V_\tau)$ and $\mathcal{G}_\tau^s\subset \mathcal{C}^s(V_\tau)$, thus the function
$V_\tau$ is positive in $\mathcal{G}_\tau^u\backslash \{0\}$ and negative in $\mathcal{G}_\tau^s\backslash \{0\}$.
For $x\in\mathcal{G}_\tau^s$ and $t\ge \tau$, we obtain from \eqref{u-grow} and Definition \ref{str-V}-(2)(3) that
\begin{align*}
\|\Psi(t,\tau)x\| \le &~ C \mu(t)^{\mathrm{sign}(t)\epsilon}|V(t,\Psi(t,\tau)x)| \le C  \left( \frac{\mu(t)}{\mu(\tau)}\right)^{\beta} \mu(t)^{\mathrm{sign}(t)\epsilon}|V(\tau,x)|
\\
\le&~ C^2 \left( \frac{\mu(t)}{\mu(\tau)}\right)^{\beta} \mu(t)^{\mathrm{sign}(t)\epsilon} \mu(\tau)^{\mathrm{sign}(\tau)\epsilon} \|x\|
\\
=&~ C^2 \left( \frac{\mu(t)}{\mu(\tau)}\right)^{\beta+\mathrm{sign}(t)\epsilon}\mu(\tau)^{(\mathrm{sign}(t)+\mathrm{sign}(\tau))\epsilon}\|x\|
\\
\le&~ C^2 \left( \frac{\mu(t)}{\mu(\tau)}\right)^{\beta+\mathrm{sign}(t)\epsilon}\mu(\tau)^{2\mathrm{sign}(\tau)\epsilon}\|x\|,
\end{align*}
due to
$$
 \mu(\tau)^{\mathrm{sign}(t)\epsilon} \le \mu(\tau)^{\mathrm{sign}(\tau)\epsilon}, \quad \forall t\ge \tau.
$$
Similarly, for $x\in\mathcal{G}_\tau^u$ and $t\ge \tau$, from \eqref{u-grow} and Definition \ref{str-V}-(1)(3), we see that
\begin{align*}
\|\Psi(t,\tau)x\| \ge &~ \frac{\mu(t)^{-\mathrm{sign}(t)\epsilon}}{C} V(t,\Psi(t,\tau)x)
       \ge \frac{\mu(t)^{-\mathrm{sign}(t)\epsilon}}{C} \left(\frac{\mu(\tau)}{\mu(t)}\right)^\alpha V(\tau,x)
\\
\ge&~ \frac{1}{C^2} \left(\frac{\mu(\tau)}{\mu(t)}\right)^\alpha \mu(t)^{-\mathrm{sign}(t)\epsilon} \mu(\tau)^{-\mathrm{sign}(\tau)\epsilon} \|x\|
\\
=&~ \frac{1}{C^2} \left(\frac{\mu(\tau)}{\mu(t)}\right)^{\alpha-\mathrm{sign}(\tau)\epsilon} \mu(t)^{-(\mathrm{sign}(t)+\mathrm{sign}(\tau))\epsilon}\|x\|.
\end{align*}
It implies that
\begin{align*}
\|\Psi(t,\tau)^{-1}x\|  \le&~ C^2 \left(\frac{\mu(\tau)}{\mu(t)}\right)^{-\alpha+\mathrm{sign}(\tau)\epsilon}
  \mu(t)^{(\mathrm{sign}(t)+\mathrm{sign}(\tau))\epsilon}\|x\|
\\
\le&~ C^2 \left(\frac{\mu(\tau)}{\mu(t)}\right)^{-\alpha+\mathrm{sign}(\tau)\epsilon}  \mu(t)^{2\mathrm{sign}(t)\epsilon}\|x\|,
\end{align*}
since
$$
\mu(t)^{\mathrm{sign}(t)\epsilon} \ge \mu(t)^{\mathrm{sign}(\tau)\epsilon}, \quad \forall t\ge \tau.
$$
This proves the second inequality of \eqref{u-lem}, and the proof of proposition is completed.
\end{proof}

\begin{proof}[Proof of Proposition \ref{thm-ud}.]
It follows from Definition \ref{Lp-func}-(1), there exist subspaces $\mathcal{G}_\tau^s\subset \mathcal{E}_\tau^s$ and $\mathcal{G}_\tau^u\subset \mathcal{E}_\tau^u$, for each $\tau\in\mathbb{R}$, such that $\mathbb{R}^n=\mathcal{G}_\tau^s\oplus \mathcal{G}_\tau^u$.
For each $t\in\mathbb{R}$, define
\begin{align}\label{G-su}
\mathcal{G}_t^s=\Psi(t,\tau)\mathcal{G}_\tau^s, \quad \mathrm{and} \quad \mathcal{G}_t^u=\Psi(t,\tau)\mathcal{G}_\tau^u.
\end{align}
Clearly, $\mathbb{R}^n=\mathcal{G}_t^s\oplus \mathcal{G}_t^u$. Consider the projections:
$$
\pi(t):\mathbb{R}^n\to \mathcal{G}_t^s \quad \mathrm{and} \quad (id-\pi(t)):\mathbb{R}^n\to \mathcal{G}_t^u.
$$
Then we have the following assertion:
\\
{\bf Assertion:} if $V$ is a strict quadratic Lyapunov function and \eqref{S-c} holds, then
\begin{align}\label{pq-est}
\|\pi(t)\|=\|(id-\pi(t))\| \le \sqrt{2}B \mu(|t|)^{2\tilde{\lambda}}
\end{align}
for some constant $B>0$.

In fact, using \eqref{E-su} and \eqref{G-su}, we have that $\mathcal{G}_t^s\subset \mathcal{E}_t^s$ and $\mathcal{G}_t^u\subset \mathcal{E}_t^u$ for each $t\in\mathbb{R}$.
In view of \eqref{QV-def}, we see that
\begin{align}\label{V-st}
\text{$V(t,\pi(t)x)^2 = \langle S(t)\pi(t)x, \pi(t)x \rangle$ \, and \, $V(t,(id-\pi(t))x)^2 = -\langle S(t)(id-\pi(t))x, (id-\pi(t))x \rangle$.}
\end{align}
Set $x=x_s+x_u$ with $x_s=\pi(t)x\in\mathcal{G}_t^s$ and $x_u=(id-\pi(t))x\in\mathcal{G}_t^u$.
Let $\nu(t)>0$ and define
$$
V_s(t,x_s)=-V(t,x_s)^2 + \nu(t)\|x_s\|^2=-\langle S(t)x_s,x_s \rangle + \nu(t)\|x_s\|^2
$$
and
$$
V_u(t,x_u)= V(t,x_u)^2 -\nu(t)\|x_u\|^2=-\langle S(t)x_u,x_u \rangle - \nu(t)\|x_u\|^2.
$$
Then by \eqref{S-c},
$$
V_s(t,x_s) \le -\frac{C_s}{4} \mu(|t|)^{-2\tilde{\lambda}}\|x_s\|^2+\nu(t)\|x_s\|^2 = (\nu(t)-\frac{C_s}{4} \mu(|t|)^{-2\tilde{\lambda}})\|x_s\|^2
$$
and
$$
V_u(t,x_u) \ge \frac{C_u}{4} \mu(|t|)^{-2\tilde{\lambda}}\|x_u\|^2 - \nu(t)\|x_u\|^2 = (\frac{C_u}{4} \mu(|t|)^{-2\tilde{\lambda}}- \nu(t))\|x_u\|^2.
$$
Hence if $\nu(t)\le \max\{C_s/4,C_u/4\}\mu(|t|)^{-2\tilde{\lambda}}$, then
$$
V_s(t,x_s)\le 0 \quad \mathrm{and} \quad V_u(t,x_u)\ge 0,
$$
which implies that
$$
-\left\langle S(t)\pi(t)x,\pi(t)x \right\rangle + \nu(t)\|\pi(t)x\|^2 \le 0
$$
and
$$
-\left\langle S(t)(id-\pi(t))x,(id-\pi(t))x \right\rangle -\nu(t) \|(id-\pi(t))x\|^2 \ge 0
$$
by \eqref{V-st}. Therefore,
\begin{align*}
 & \nu(t)\|\pi(t)x\|^2 + \nu(t) \|(id-\pi(t))x\|^2
\\
&\quad  -\left\langle S(t)\pi(t)x,\pi(t)x \right\rangle + \left\langle S(t)(id-\pi(t))x,(id-\pi(t))x \right\rangle
\\
&=  \nu(t)\|\pi(t)x\|^2 + \nu(t) \|(id-\pi(t))x\|^2 + \langle S(t)x,x \rangle
\\
&\quad   - 2 \langle S(t)\pi(t)x,x \rangle \le 0,
\end{align*}
and thus,
\begin{align*}
& \nu(t) \left\| \pi(t)x-\frac{1}{2\nu(t)}S(t)x \right\|^2 + \nu(t) \left\| (id-\pi(t))x+\frac{1}{2\nu(t)}S(t)x \right\|^2
\\
&= \nu(t)\|\pi(t)x\|^2 + \nu(t) \|(id-\pi(t))x\|^2 + \langle S(t)x,x \rangle
\\
& \quad +\frac{\|S(t)x\|^2}{2\nu(t)} - 2 \langle S(t)\pi(t)x,x \rangle \le \frac{\|S(t)x\|^2}{2\nu(t)},
\end{align*}
which indicates that
\begin{align*}
\|\pi(t)x\|=&~ \left\|\pi(t)x-\frac{1}{2\nu(t)}S(t)x+\frac{1}{2\nu(t)}S(t)x\right\|
\\
\le&~ \left\|\pi(t)x-\frac{1}{2\nu(t)}S(t)x \right\| + \frac{1}{2\nu(t)} \|S(t)x\|
\\
\le&~ \frac{1}{\sqrt{2}} \|S(t)x\| + \frac{1}{2\nu(t)}\|S(t)x\| \le \frac{\sqrt{2}}{\nu(t)} \|S(t)x\|
\end{align*}
and similarly, $\|(id-\pi(t))x\| \le \frac{\sqrt{2}}{\nu(t)} \|S(t)x\|$.
Choosing $\nu(t)=\max\{C_s/4,C_u/4\} \mu(|t|)^{-2\tilde{\lambda}}$, we have
$$
\|\pi(t)\|=\|(id-\pi(t))\| \le \frac{\sqrt{2}}{\max\{C_s/4,C_u/4\}}  \mu(|t|)^{2\tilde{\lambda}}.
$$

We now continue to prove the proposition.
Observe that the subspaces $\mathcal{G}_t^s$ and $\mathcal{G}_t^u$ satisfy all conditions of Proposition \ref{thm-str}.
Then for all $t\ge\tau$,  it follows from  \eqref{u-lem}, \eqref{pq-est} and the fact $\mu(|\tau|)^{\tilde{\lambda}}=\mu(\tau)^{\mathrm{sign}(\tau)\tilde{\lambda}}$ that
\begin{align*}
\|\Psi(t,\tau)\pi(\tau)\| \le&~ \|\Psi(t,\tau)|\mathcal{G}_\tau^s\| \|\pi(\tau)\|
\\
\le&~ C^2 \left( \frac{\mu(t)}{\mu(\tau)}\right)^{\beta+\mathrm{sign}(t)\epsilon}\mu(\tau)^{2\mathrm{sign}(\tau)\epsilon} \cdot \sqrt{2}B\mu(|\tau|)^{2\tilde{\lambda}}
\\
=&~ \sqrt{2}BC^2 \left( \frac{\mu(t)}{\mu(\tau)}\right)^{\beta+\mathrm{sign}(t)\epsilon} \mu(\tau)^{2\mathrm{sign}(\tau)(\epsilon+\tilde{\lambda})}
\end{align*}
and
\begin{align*}
\|\Psi(t,\tau)^{-1}(id-\pi(t))\| \le&~ \|\Psi(t,\tau)^{-1}|\mathcal{G}_t^u\| \|id-\pi(t)\|
\\
\le&~  C^2 \left(\frac{\mu(\tau)}{\mu(t)}\right)^{-\alpha+\mathrm{sign}(\tau)\epsilon}
  \mu(t)^{ 2\mathrm{sign}(t) \epsilon} \cdot \sqrt{2}B\mu(|t|)^{2\tilde{\lambda}}
\\
=&~ \sqrt{2} BC^2 \left(\frac{\mu(\tau)}{\mu(t)}\right)^{-\alpha+\mathrm{sign}(\tau)\epsilon}
  \mu(t)^{ 2\mathrm{sign}(t) (\epsilon+\tilde{\lambda})}.
\end{align*}
Hence,
\eqref{lin-eq} admits a nonuniform $\mu$-dichotomy if
we take
\begin{align*}
&D=\sqrt{2} BC^2,
\quad \nu=2(\epsilon+\tilde{\lambda}), \quad
\omega=2(\epsilon+\tilde{\lambda}),
\\
 &\lambda_s=\beta+\mathrm{sign}(t)\epsilon,
\quad  \lambda_u=-\alpha+\mathrm{sign}(\tau)\epsilon.
\end{align*}
This completes the proof.
\end{proof}

\section*{Acknowledgments}

This paper was jointly supported by the Natural Science Foundation of Zhejiang Province (No. LZ24A010006),
and the National Natural Science Foundation of China (No. 11671176, 11931016).

\end{document}